\newcommand{\df}{F}
\newcommand{\adf}{G}
\newcommand{\ldf}{\underline{\df}}
\newcommand{\udf}{\overline{\df}}
\newcommand{\aldf}{\underline{\adf}}
\newcommand{\audf}{\overline{\adf}}
\newcommand{\pdomain}{\mathcal{K}}
\newcommand{\plattice}{\mathcal{H}}
\newcommand{\pr}{P}
\newcommand{\apr}{Q}
\newcommand{\nex}{E}
\newcommand{\lpr}{\underline{\pr}}
\newcommand{\upr}{\overline{\pr}}
\newcommand{\alpr}{\underline{\apr}}
\newcommand{\lnex}{\underline{\nex}}
\newcommand{\unex}{\overline{\nex}}
\newcommand{\lpbox}{\lpr_{\ldf,\udf}}
\newcommand{\lpboxlattice}{{\lpr^{\plattice}_{\ldf,\udf}}}
\newcommand{\lnepbox}{\lnex_{\ldf,\udf}}
\newcommand{\unepbox}{\unex_{\ldf,\udf}}
\newcommand{\lcdf}{\lpr_\df}
\newcommand{\lnecdf}{\lnex_\df}
\newcommand{\unecdf}{\unex_\df}
\newcommand{\set}[2]{\left\{#1\colon#2\right\}}
\newcommandx{\gambles}[1][1=]{
  \ifthenelse{\equal{#1}{}}%
  {
    \mathcal{L}
  }{
    \mathcal{L}(#1)
  }
}
\newcommand{\solp}{\mathcal{M}}
\newcommand{\dif}{\,\mathrm{d}}
\newcommand{\card}[1]{{\lvert#1\rvert}}
\newcommand{\pspace}{\Omega}
\newcommand{\linprevs}{\mathcal{P}}
\newcommand{\interior}[1]{\mathrm{int}(#1)}
\newcommand{\closure}[1]{\mathrm{cl}(#1)}
\newcommand{\alpbox}{\lpr_{\aldf,\audf}}
\newcommand{\alnepbox}{\lnex_{\aldf,\audf}}
\newcommand{\aunepbox}{\unex_{\aldf,\audf}}
\newcommand{\SetN}{\mathbb{N}}
\newcommand{\losc}{\underline{osc}}
\newcommand{\uosc}{\overline{osc}}
\newcommand{\reals}{\mathbb{R}}
\theoremstyle{plain}
\newtheorem{theorem}{Theorem}
\newtheorem{proposition}[theorem]{Proposition}
\newtheorem{corollary}[theorem]{Corollary}
\newtheorem{lemma}[theorem]{Lemma}
\theoremstyle{remark}
\newtheorem{example}[theorem]{Example}
\theoremstyle{definition}
\newtheorem{definition}[theorem]{Definition}
\begin{document}
\title[P-boxes on totally preordered spaces for multivariate modelling]{Probability boxes on totally preordered spaces for multivariate modelling}
\author[M. Troffaes]{Matthias Troffaes}
\address{Durham University, Dept. of Mathematical Sciences, Science Laboratories, South Road, Durham
DH1 3LE, United Kingdom} \email{matthias.troffaes@gmail.com}
\author[S. Destercke]{Sebastien Destercke}
\address{CIRAD, UMR1208, 2 place P. Viala, F-34060 Montpellier cedex 1, France} \email{sebastien.destercke@cirad.fr}

\begin{abstract}
A pair of lower and upper cumulative distribution functions, also called probability box or p-box, is among the most popular models used
in imprecise probability theory. They arise naturally in expert elicitation, for instance in cases where bounds are specified on the quantiles of a random variable, or when quantiles are specified only at a finite number of points.
Many practical and formal results
concerning p-boxes already exist in the literature.
In this paper, we provide new efficient tools to construct multivariate p-boxes and develop algorithms to draw inferences from them.
For this purpose, we formalise and extend the theory of p-boxes using Walley's behavioural theory of imprecise probabilities, and heavily rely on its notion of natural extension and existing results about independence modeling.
In particular, we allow p-boxes to be defined on
arbitrary totally preordered spaces, hence thereby also admitting multivariate p-boxes
via probability bounds over any collection of nested sets.
We focus on the cases of independence (using the factorization property), and of unknown dependence (using the Fr\'echet bounds),
and we show that our approach extends the probabilistic arithmetic of Williamson and Downs.
Two design problems---a damped oscillator, and a river dike---demonstrate the practical feasibility of our results.
\end{abstract}
\keywords{Lower prevision, p-box, multivariate, Choquet integral, Fr\'echet bounds, full component}
\maketitle
\thispagestyle{fancy}

\section{Introduction}

Imprecise probability \cite{walley1991} refers to
uncertainty models applicable in situations where the available
information does not allow us to single out a unique probability
measure for all random variables involved. Examples of such models
include $2$- and $n$-monotone capacities \cite{choquet1953}, lower
and upper previsions \cite{williams1975,williams2007,walley1991},
belief functions \cite{shafer1976}, credal sets \cite{levi1980a},
possibility and necessity measures \cite{dubois1988,cooman1997ab},
interval probabilities \cite{weichselberger2001}, and coherent risk
measures \cite{artzner1999,delbaen2002}.

Unlike classical probability models, which are described by
probability measures, imprecise probability models
require more complex mathematical tools, such as non-linear
functionals and non-additive measures \cite{walley1991}. It is therefore of
interest to consider particular imprecise probability models that
yield simple mathematical descriptions, possibly at the expense of
generality, but gaining ease of use, elicitation, and graphical
representation.

One such model is considered in this paper: pairs of lower and upper
distribution functions, also called \emph{probability boxes}, or
briefly, p-boxes \cite{ferson2003,2006:ferson}. P-boxes are often
used in risk or safety studies, in which cumulative distributions
play an essential role. Many theoretical properties and practical aspects of p-boxes have already been studied in the literature. Previous work includes probabilistic
arithmetic \cite{WilliamsonDowns1990}, which provides a very
efficient numerical framework for particular statistical inferences
with p-boxes (and which we generalise in this paper). In \cite{2008:ferson}, p-boxes are connected to
info-gap theory \cite{2006:benhaim}. The relation
between p-boxes and random sets was investigated in
\cite{KrieglerHeld2005} and applied in \cite{2008:oberguggenberger}:
many results and techniques applicable to random sets are also
applicable to p-boxes. Finally, a recent extension of p-boxes to
arbitrary finite spaces \cite{desterckedubois2008} yields potential
application of p-boxes to a much more general set of problems, such as robust design analysis \cite{desterckedubois2008b,2008:fuchs}, and signal processing \cite{2009:desterckeb}. 

In this paper, we study p-boxes within the framework of
the theory of coherent lower previsions. Coherent lower previsions
were introduced by Williams \cite{williams1975} as a generalisation of de Finetti's
work \cite{finetti19745}, and were developed further by
Walley \cite{walley1991}. Coherent lower previsions generalize
many of the other imprecise probability models in the
literature, and are equivalent to closed convex sets of finitely
additive probability measures.

Studying p-boxes by means of lower previsions has at least two advantages:
\begin{itemize}
\item Lower previsions can be defined on arbitrary spaces, and thus enable p-boxes to be used for more general problems, and not just problems concerning the real line.
\item Lower previsions come with a powerful inference tool, called \emph{natural extension}, which reflects the least-committal consequences of any given assessments. Natural extension generalises many known extensions, including for instance Choquet integration for $2$-monotone measures. In this paper, we study the natural extension of a p-box, and we derive a number of useful expressions for it. This leads to new numerical tools that provide exact inferences on arbitrary events, and even on arbitrary (bounded) random quantities.
\end{itemize}

From the point of view of coherent lower previsions, p-boxes have
already been studied briefly in \cite[Section~4.6.6]{walley1991} and
\cite{troffaes2005}. Lower and upper
distribution functions associated with a sequence of moments have also been considered \cite{miranda2006c}.

As already mentioned, \cite{desterckedubois2008} extended
p-boxes to finite totally preordered spaces. In this paper, we
extend p-boxes further to arbitrary totally preordered spaces.
Our generalisation has many useful features that classical p-boxes do not have:
\begin{itemize}
\item We encompass, in one sweep, p-boxes defined on finite spaces, as well as (continuous) p-boxes on closed real
intervals.
\item Perhaps even more importantly, as we do not impose
anti-symmetry on the ordering, we can also handle product spaces by
considering an appropriate total preorder---for instance, one
induced by a metric---and thus also admit multivariate non-finite p-boxes,
which have not been considered before.
Whence, we can specify p-boxes directly on the product space. Contrast this with the usual multivariate approach to p-boxes, such as probabilistic arithmetic \cite{WilliamsonDowns1990}, that consider one marginal p-box per dimension and draw inferences from a joint model built around some information about variable dependencies (of course, we can still do the same, and will do so in Section~\ref{sec:marginals}).
\item  Our approach is also useful in elicitation, as it allows uncertainty to be expressed as probability bounds over any collection of (possibly multivariate) nested sets---also see \cite{2008:fuchs,2009:destercke} for a discussion of similarly constructed models. Thus, unlike classical p-boxes, we are not restricted to events of the type $[-\infty,x]$, even on the real line.
\end{itemize}

Our approach is thus rather different, and far more general, than the one usually considered for inferences with p-boxes. Indeed, we first define a joint p-box over some multivariate space of interest, either directly or by using marginal models and a dependence model, after which we draw exact inferences from this joint p-box using natural extension. In contrast, usual methods~\cite{2004:regan} such as for instance probabilistic arithmetic \cite{WilliamsonDowns1990} start out with marginal p-boxes each defined on the real line, and provide tools to make inferences for specific multivariate events.

The paper is organised as follows: Section~\ref{sec:prel} provides a
brief introduction to the theory of coherent lower previsions, used
in the rest of the paper. Section~\ref{sec:charac} then introduces
and studies the p-box model from the point of view of
lower previsions. Section~\ref{sec:nex} provides an expression
for the natural extension of a p-box to all events, via the partition topology induced by the equivalence classes of the preorder and additivity on full components.
Section~\ref{sec:nex-gambles} studies the natural extension to all gambles, via the Choquet integral. Section~\ref{sec:induced-preorder} studies an important special case of p-boxes whose preorder is induced by a real-valued mapping, as this will usually be the most convenient way to specify a multivariate p-box.
Section~\ref{sec:marginals} discusses the construction of such multivariate p-boxes from marginal coherent lower previsions under arbitrary dependency models. For two important special cases---epistemic independence, and completely unknown dependence (the Fr\'echet case)---closed expressions are derived.
We also derive probabilistic arithmetic as a special case of our approach.
Section~\ref{sec:example:engineering} demonstrates the theory with some examples. As a first example, we infer bounds on the expectation of the damping ratio of a damped harmonic oscillator whose parameters are described by a multivariate p-box. As a second example, we derive the expected overflow height for a river dike, again using a multivariate p-box.
Finally, Section~\ref{sec:conclusions} ends with our main conclusions
and open problems.

\section{Preliminaries}
\label{sec:prel}

We start with a brief introduction to the imprecise probability models
that we shall use in this paper. We refer to \cite{1854:boole}, \cite{williams1975} and
\cite{walley1991} for more details. See also \cite{miranda2008} for
a brief summary of the theory.

Let $\pspace$ be the possibility space. A subset of $\pspace$ is
called an \emph{event}. A \emph{gamble} on $\pspace$ is a bounded
real-valued function on $\pspace$. The set of all gambles on
$\pspace$ is denoted by $\gambles[\pspace]$, or simply by $\gambles$
if the possibility space is clear from the context. A particular
type of gamble is the \emph{indicator} of an event $A$, which is the
gamble that takes the value $1$ on elements of $A$ and the value $0$
elsewhere, and is denoted by $I_A$, or simply by $A$ if no confusion
with $A$ as event is possible.

A \emph{lower prevision} $\lpr$ is a real-valued functional defined
on an arbitrary subset $\pdomain$ of $\gambles$, and is interpreted
as follows: for any gamble $f$ in $\pdomain$ and any $\epsilon>0$,
the transaction $f-\lpr(f)+\epsilon$ is acceptable to the subject who has assessed this lower prevision.
Hence, lower previsions summarize a subject's supremum buying prices
for a collection of gambles, and it can be argued that in this way
they model a subject's belief about the true state $x$ in $\pspace$
(see \cite{walley1991} for a more in-depth explanation). A lower
prevision defined on a set of indicators of events is usually called
a \emph{lower probability}.

By $\upr$, we denote the conjugate \emph{upper prevision} of $\lpr$:
for every $f$ such that $-f\in\pdomain$, $\upr(f)=-\lpr(-f)$. The
upper prevision $\upr(f)$ can be interpreted as a subject's infimum
selling price for $f$.

A lower prevision on $\pdomain$ is called \emph{coherent} (see
\cite[p.~5]{williams1975} and \cite[pp.~73--75,
Sec.~2.5]{walley1991}) when for all $p$ in $\SetN$, all $f_0$,
$f_1$, \dots, $f_p$ in $\pdomain$ and all non-negative real numbers
$\lambda_0$, $\lambda_1$, \dots, $\lambda_p$,
\begin{equation*}
  \sup_{x \in \pspace}
  \left[
    \sum_{i=1}^{p} \lambda_i
    (f_i(x)-\lpr(f_i))-\lambda_0(f_0(x)-\lpr(f_0))
  \right]
  \geq 0.
\end{equation*}
A lower prevision on the set $\gambles$ of all gambles is coherent
if and only if (see \cite[p.~11, Sec.~1.2.2]{williams1975} and
\cite[p.~75, Sec.~2.5.5]{walley1991})
\begin{itemize}
 \item[(C1)] $\lpr(f)\geq \inf f$,
 \item[(C2)] $\lpr(\lambda f)=\lambda \lpr(f)$, and
 \item[(C3)] $\lpr(f+g)\geq \lpr(f)+\lpr(g)$
\end{itemize}
for all gambles $f$, $g$ and all non-negative real numbers $\lambda$.

A functional on $\gambles$ satisfying $\pr(f)\geq \inf f$ and
$\pr(f+g)=\pr(f)+\pr(g)$ for any pair of gambles $f$ and $g$ is
called a \emph{linear prevision} on $\gambles$ \cite[p.~88,
Sec.~2.4.8]{walley1991}, and the set of all linear previsions on
$\gambles$ is denoted by $\linprevs$. A linear prevision is the
expectation operator with respect to its restriction to events,
which is a finitely additive probability.

An alternative characterisation of coherence via linear previsions
goes as follows. Let $\lpr$ be a lower prevision on $\pdomain$ and
let $\solp(\lpr)$ denote the set of all linear previsions on
$\gambles$ that dominate $\lpr$ on $\pdomain$:
\begin{equation*}
  \solp(\lpr)=\{\apr\in\linprevs\colon (\forall f\in\pdomain)(\apr(f)\ge\lpr(f))\}
\end{equation*}
Then $\lpr$ is coherent if and only if $\lpr$ agrees with the lower
envelope of $\solp(\lpr)$  on $\pdomain$, that is, if and only if
$\lpr(f)=\min_{\apr\in\solp(\lpr)}\apr(f)$ for all $f\in\pdomain$
(see \cite[p.~18]{williams1975} and \cite[p.~138,
Sec.~3.3.3]{walley1991}). A consequence of this is that a lower
envelope of coherent lower previsions is again a coherent lower
prevision.

Given a coherent lower prevision $\lpr$ on $\pdomain$, its
\emph{natural extension} \cite[Chapter~3]{walley1991} to a larger
set $\pdomain_1$ of gambles ($\pdomain_1 \supseteq \pdomain$) is the pointwise smallest
coherent lower prevision on $\pdomain_1$ that agrees with $\lpr$ on
$\pdomain$. Because it is the pointwise smallest, the natural
extension is the most conservative (or least-committal) coherent
extension, and thereby reflects the minimal behavioural consequences
of $\lpr$ on $\pdomain_1$.

Taking natural extension is transitive \cite[p.~98,
Cor.~4.9]{troffaes2005}: if $\lnex_1$ is the natural extension of
$\lpr$ to $\pdomain_1$ and $\lnex_2$ is the natural extension of
$\lnex_1$ to $\pdomain_2\supseteq \pdomain_1$, then $\lnex_2$ is
also the natural extension of $\lpr$ to $\pdomain_2$. Hence, if we
know the natural extension of $\lpr$ to the set of all gambles then
we also know the natural extension of $\lpr$ to any set of gambles
that includes $\pdomain$. The natural extension to all gambles is
usually denoted by $\lnex$. It holds that
$\lnex(f)=\min_{\apr\in\solp(\lpr)}\apr(f)$ for any $f\in\gambles$
\cite[p.~136, Sec.~3.4.1]{walley1991}.

A particular class of coherent lower previsions of interest in this
paper are \emph{completely monotone lower previsions}
\cite{cooman2008,cooman2008c}. A lower prevision $\lpr$ defined on
a lattice of gambles $\pdomain$, i.e., a set of gambles closed under
point-wise maximum and point-wise minimum, is called $n$-monotone if
for all $p\in\SetN$, $p\leq n$, and all $f$, $f_1$, \dots, $f_p$ in
$\pdomain$:
  \begin{equation*}
    \sum_{I\subseteq\{1,\dots,p\}}(-1)^\card{I}
    \lpr\left(f\wedge\bigwedge_{i\in I}f_i\right)\geq0.
  \end{equation*}
A lower prevision which is $n$-monotone for all $n\in\SetN$ is
called \emph{completely monotone}.

\section{P-Boxes}
\label{sec:charac}

In this section, we introduce the formalism of p-boxes
defined on totally preordered (not necessarily finite) spaces, as well
as the field of events $\plattice$, which will be instrumental to
study the natural extension of p-boxes. Hence, in contrast to
the work by \cite{ferson2003}, we do not restrict p-boxes to
intervals on the real line.

Let $(\pspace,\preceq)$ be a total preorder: so $\preceq$ is transitive and
reflexive and any two elements are comparable. We write $x\prec y$ for
$x\preceq y$ and $x\not\succeq y$, $x\succ y$ for $y\prec x$, and $x\simeq y$ for $x\preceq y$ and $y\preceq x$. For any two $x$,
$y\in\pspace$ exactly one of $x\prec y$, $x\simeq y$, or $x\succ y$ holds. We also use the following common notation for intervals in $\pspace$:
\begin{align*}
  [x,y]&=\{z\in\pspace\colon x\preceq z\preceq y\} \\
  (x,y)&=\{z\in\pspace\colon x\prec z\prec y\}
\end{align*}
and similarly for $[x,y)$ and $(x,y]$.

For simplicity,
we assume that $\pspace$ has a smallest element $0_\pspace$ and a
largest element $1_\pspace$. This is no essential assumption, since
we can always add these two elements to the space $\pspace$.

A \emph{cumulative distribution function} is a mapping $\df:\pspace\rightarrow [0,1]$ which is
non-decreasing and satisfies moreover $\df(1_\pspace)=1$. For each $x\in\pspace$, we interpret $\df(x)$ as
the probability of the interval $[0_\pspace,x]$. Note that we do not impose
$\df(0_\pspace)=0$, so we allow $\{0_\pspace\}$ to carry non-zero mass, which happens commonly if $\pspace$ is finite. Also note that distribution functions are not
assumed to be right-continuous---this would make no sense since we have no topology defined yet on $\pspace$---but even if there is a topology for $\pspace$, we make no continuity assumptions.

By $\pspace/\simeq$ we denote the quotient set of $\pspace$ with respect to the equivalence relation $\simeq$ induced by $\preceq$, that is:
\begin{align*}
  [x]_{\simeq}&=\{y\in\pspace\colon y\simeq x\} \text{ for any }x\in\pspace \\
  \pspace/\simeq&=\{[x]_{\simeq}\colon x\in\pspace\}
\end{align*}
Because $\df$ is non-decreasing, $\df$ is constant on elements
$[x]_{\simeq}$ of $\pspace/\simeq$.

\begin{definition}
  A \emph{probability box}, or \emph{p-box}, is a pair $(\ldf,\udf)$ of
  cumulative distribution functions from $\pspace$ to $[0,1]$ satisfying $\ldf\leq\udf$.
\end{definition}

Note that some definitions, such as in \cite{desterckedubois2008}, differ in that they use the coarsest preorder for which $\ldf$ and $\udf$ are non-decreasing, effectively imposing that $x \simeq y$ if and only if $\ldf(x)=\ldf(y)$ and $\udf(x)=\udf(y)$. This paper follows \cite{desterckedubois2006}, only imposing that $\ldf$ and $\udf$ are non-decreasing, so $x \simeq y$ implies $\ldf(x)=\ldf(y)$ and $\udf(x)=\udf(y)$, but not the other way around, thereby admitting more preorders, possibly leading to tighter bounds (see Example~\ref{ex:ordering-importance} further on).

A p-box is interpreted as a lower and an upper
cumulative distribution function. In Walley's framework, this
means that a p-box is interpreted as a
lower probability $\lpbox$ on the set of events
\begin{equation*}
  \pdomain=\set{[0_\pspace,x]}{x\in\pspace}\cup\set{(y,1_\pspace]}{y\in\pspace}
\end{equation*}
by
\begin{equation*}
  \lpbox([0_\pspace,x])=\ldf(x)\text{ and }\lpbox((y,1_\pspace])=1-\udf(y).
\end{equation*}
In the particular case of p-boxes on $[a,b]\subseteq\reals$ it was mentioned by
\cite[Section~4.6.6]{walley1991} and proven by \cite[p.~93,
Thm.~3.59]{troffaes2005} that $\lpbox$ is coherent. More generally, it is
straightforward to show that p-boxes on an arbitrary totally preordered space $(\pspace,\preceq)$ are coherent as
well.\footnote{The proof is virtually identical to the one given in \cite[p.~93, Thm.~3.59]{troffaes2005}.}
We denote by $\lnepbox$ the natural extension of
$\lpbox$ to all gambles. We study this natural extension extensively
further on.

When $\ldf=\udf$, we say
that $(\ldf,\udf)$ is \emph{precise}, and we denote the corresponding lower
prevision on $\pdomain$ by $\lcdf$ and its natural extension to
$\gambles$ by $\lnecdf$ (with $\df:=\ldf=\udf$).

A few examples of p-boxes on $[0,1]$ are illustrated in Fig~\ref{fig:genpbox}.

\begin{figure}
\begin{tikzpicture}[scale=0.85]
\tikzstyle{indf}=[circle,draw=black,fill=black,scale=0.3]
\tikzstyle{notin}=[circle,draw=black,solid,scale=0.3]
\tikzstyle{inout}=[circle,draw=gray,fill=gray,scale=0.3]
\draw[->] (-0.1,0) -- (5.5,0) node[below right] {$\pspace$}; \draw
(5,0.1) -- (5,-0.1) node[below] {$1$}; \draw[->] (0,-0.1)
node[below] {$0$} -- (0,3) node[left] {1} -- (0,3.2);
\draw[domain=0:5,thick]  plot[id=cdf1,smooth] function{3*norm((x-2)*2)};
\draw[domain=0:5,thick]  plot[id=cdf2,smooth] function{3*norm((x-3)*2)};
\node at (1.7,1.5) {$\udf$}; \node at (3.3,1.5) {$\ldf$};

\begin{scope}[xshift=8cm]
\draw[->] (-0.1,0) -- (5.5,0) node[below right] {$\pspace$};
\draw (5,0.1) -- (5,-0.1) node[below] {$1$};
\draw[->] (0,-0.1) node[below] {$0$} -- (0,3.2) node[left] {1};
\draw (0,0.6) node[indf] {} --  (1,0.6) ;
\draw[dashed] (1,0.6) node[notin] {} -- (1,0.9);
\draw (1,0.9) node[indf] {} -- (3,0.9)    ;
\draw[dashed] (3,0.9) node[indf] {} -- (3,2.4) node[midway,right] {$\ldf$} ;
\draw (3,2.4) node[notin] {} -- (4.5,2.4) ;
\draw[dashed]  (4.5,2.4) node[notin] {} -- (4.5,3) ;
\draw (4.5,3) node[indf] {} -- (5,3);

\draw  (0,0.9) node[indf] {}  --  (0.5,0.9) ;
\draw[dashed] (0.5,0.9) node[notin] {} -- (0.5,1.8);
\draw (0.5,1.8) node[indf] {} -- (2,1.8)    ;
\draw[dashed] (2,1.8) node[notin] {} -- (2,2.4) node[indf]  {} -- (2,3) node[notin] {} ;
\draw (2,3) -- (5,3);
\draw (2,-0.1) node[below] {$x$} -- (2,0.1) ;
\node[above] at (2,3) {$\udf$};

\end{scope}

\begin{scope}[yshift=-5cm]
\draw[->] (-0.1,0) -- (5.5,0) node[below right] {$\pspace$};
\draw (5,0.1) -- (5,-0.1) node[below] {$1$};
\draw[->] (0,-0.1) node[below] {$0$} -- (0,3) node[left] {1} -- (0,3.2);
\draw[domain=0:5, thick]  plot[id=cdf3,smooth] function{3*norm((x-2.5)*2)};
\node[right] at (2.5,1.5) {$\df:=\udf=\ldf$};

\end{scope}

\begin{scope}[yshift=-5cm,xshift=8cm]
\draw[->] (-0.1,0) -- (5.5,0) node[below right] {$\pspace$};
\draw (5,0.1) -- (5,-0.1) node[below] {$1$};
\draw[->] (0,-0.1) node[below] {$0$} -- (0,3) node[left] {1} -- (0,3.2);
\draw[thick] (0,0.3) node [indf] {} .. controls (1.8,0.3) .. (2.5,1.2) ;
\draw[dashed] (2.5,1.2) node[notin] {} --  (2.5,1.6) node[indf] {} -- (2.5,2) node[notin] {} ;
\draw[thick] (2.5,2) .. controls (3,3) .. (5,3) node [indf] {};
\node[right] at (2.5,1.6) {$\ldf$} ;
\draw[thick]  (0,0.6) node [indf] {} .. controls (1.5,0.6) and (1,3)  .. (4,3) node[midway,above] {$\udf$}  -- (5,3) ;

\end{scope}
\end{tikzpicture}
\caption{Examples of p-boxes on $[0,1]$.}
\label{fig:genpbox}
\end{figure}
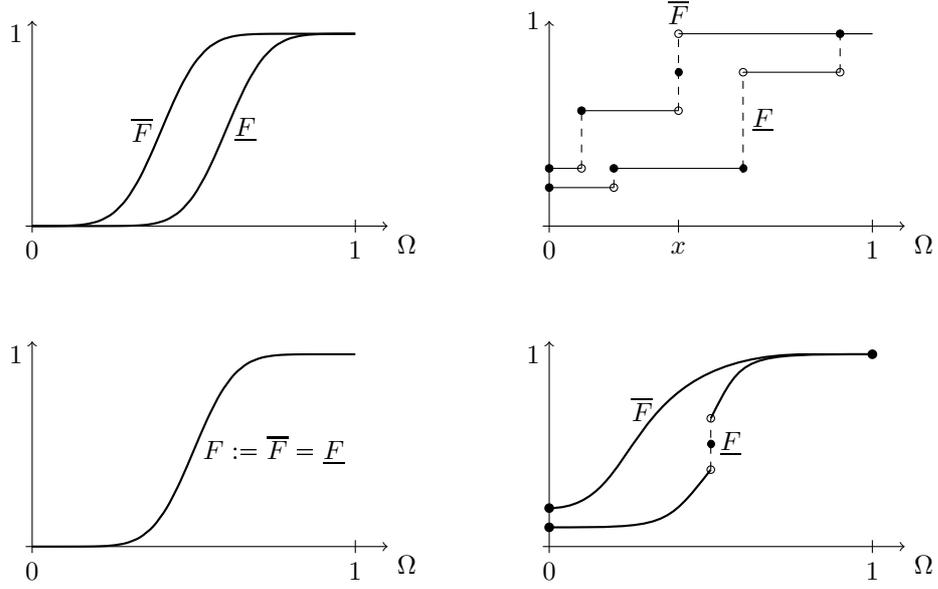

Given a p-box, we can consider the set of distribution
functions that lie between $\ldf$ and $\udf$,
\begin{equation*}
  \Phi(\ldf,\udf)=\set{\df}{\ldf\leq\df\leq\udf}.
\end{equation*}
We can easily express the natural extension $\lnex_{\ldf,\udf}$ of
$\lpbox$ in terms of $\Phi(\ldf,\udf)$: $\lnex_{\ldf,\udf}$ is the
lower envelope of the natural extensions of the cumulative distribution functions\footnote{The natural extension of a cumulative distribution function $\df$ is simply understood to be the natural extension of the precise p-box $(\df,\df)$.} $\df$ that lie
between $\ldf$ and $\udf$:
\begin{equation}\label{eq:lower-envelope-df}
  \lnepbox(f)=\inf_{\df\in\Phi(\ldf,\udf)}\lnecdf(f)
\end{equation}
for all gambles $f$ on $\pspace$. A similar result for p-boxes on
$[0,1]$ can be found in \cite[Section~4.6.6]{walley1991} and
\cite[Theorem~2]{miranda2006c}. To see why this holds, note that
$\lpbox$ is the lower envelope of the set $\solp(\lpbox)$ of linear
previsions dominating $\lpbox$, because $\lpbox$ is a coherent lower
probability. Each of the linear previsions $\apr$ in $\solp(\lpbox)$
has a cumulative distribution function $\df_{\apr}$, and it is easy to see that
$\df_{\apr}\in\Phi(\ldf,\udf)$. Conversely, any linear prevision
whose cumulative distribution function $\df$ belongs to $\Phi(\ldf,\udf)$ must belong to
$\solp(\lpbox)$. Therefore, the set $\solp(\lpbox)$ coincides with
the set of all linear previsions whose cumulative distribution function belongs to
$\Phi(\ldf,\udf)$, which establishes
Eq.~\eqref{eq:lower-envelope-df}. We shall use this equation
repeatedly in subsequent proofs. This allows us moreover to give a
sensitivity analysis interpretation to p-boxes: we can
always regard them as a model for the imprecise knowledge of a
cumulative distribution function.

We end this section with a trivial, yet very useful, approximation theorem:

\begin{theorem}\label{thm:pbox-approximation}
  Let $\lpr$ be any coherent lower prevision defined on
  $\gambles$. The least conservative p-box $(\ldf,\udf)$
  on $(\pspace,\preceq)$ whose natural extension is dominated by
  $\lpr$ is given by
  \begin{align*}
    \ldf(x)&=\lpr([0_\pspace,x])) & \udf(x)&=\upr([0_\pspace,x])
  \end{align*}
  for all $x\in\pspace$.
\end{theorem}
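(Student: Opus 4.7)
The plan is to verify in turn that the pair $(\ldf,\udf)$ defined by these formulas is a valid p-box, that its natural extension is pointwise dominated by $\lpr$, and that any competing p-box with the same property has a pointwise smaller lower CDF and pointwise larger upper CDF.

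For validity, I would invoke coherence of $\lpr$: the inclusion $[0_\pspace,x]\subseteq[0_\pspace,y]$ for $x\preceq y$ together with monotonicity of coherent lower (and, by conjugacy, upper) previsions gives monotonicity of $\ldf$ and $\udf$; the normalisation $\ldf(1_\pspace)=\udf(1_\pspace)=1$ follows from (C1) and conjugacy applied to the constant gamble $1$; and $\ldf\leq\udf$ is immediate from $\lpr\leq\upr$.

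For the domination claim, the key observation is that by construction $\lpr$ agrees with $\lpbox$ on the entire domain $\pdomain$: directly one has $\lpr([0_\pspace,x])=\ldf(x)=\lpbox([0_\pspace,x])$, and conjugacy yields $\lpr((y,1_\pspace])=1-\upr([0_\pspace,y])=1-\udf(y)=\lpbox((y,1_\pspace])$. Hence $\lpr$ is itself a coherent extension of $\lpbox$ to all of $\gambles$, and since $\lnepbox$ is by definition the pointwise smallest such extension, $\lnepbox\leq\lpr$ follows immediately.

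For the optimality claim, let $(\aldf,\audf)$ be any p-box on $(\pspace,\preceq)$ with $\alnepbox\leq\lpr$. Evaluating on the events of $\pdomain$, and using that a coherent natural extension agrees with the original assessment on its domain, one gets $\aldf(x)=\alnepbox([0_\pspace,x])\leq\lpr([0_\pspace,x])=\ldf(x)$ and, symmetrically via the complement $(y,1_\pspace]$, $\audf(y)\geq\udf(y)$, so no tighter p-box can work. There is no genuine obstacle here; the only point worth flagging is that the argument relies essentially on $\lpr$ being coherent on all of $\gambles$, so that it qualifies as a competing coherent extension of $\lpbox$ and the minimality characterisation of natural extension can be applied directly.
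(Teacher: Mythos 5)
Your proposal is correct and follows essentially the same route as the paper: domination is obtained because $\lpr$ is itself a coherent extension of $\lpbox$ and natural extension is the pointwise smallest such extension, and optimality is obtained by evaluating any competing p-box's natural extension on the events of $\pdomain$. The only addition is your explicit check that the formulas define a valid p-box, which the paper leaves implicit but which is a harmless and correct verification.
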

\begin{proof}
  Obviously, the natural extension $\lnepbox$ of the p-box, with
  $\ldf$ and $\udf$ as above, is dominated by $\lpr$, because $\lpr$
  is an extension of $\lpbox$ (see for instance \cite[p.~98,
  Prop.~4.7]{troffaes2005}).

  Any other p-box $(\aldf,\audf)$ whose natural extension is dominated
  by $\lpr$ must satisfy:
  \begin{align*}
    \aldf(x)&=\alpbox([0_\pspace,x])=\alnepbox([0_\pspace,x])\le\lpr([0_\pspace,x])=\ldf(x)
    \\
    \audf(x)&=1-\alpbox((x,1])=\aunepbox([0_\pspace,x])\ge\upr([0_\pspace,x])=\udf(x)
  \end{align*}
  so $(\ldf,\udf)$ is indeed the least conservative one.
\end{proof}

\section{Natural Extension to All Events}
\label{sec:nex}

The remainder of this paper is devoted to the natural
extension $\lnepbox$ of $\lpbox$, and to various convenient
expressions for it. We start by giving the form of the
natural extension on the field of events generated by $\pdomain$.

\subsection{Extension to the Field Generated by the Domain}

Let $\plattice$ be the field of events generated by the domain
$\pdomain$ of the p-box, i.e., events of the type
\begin{equation*}
  [0_\pspace,x_1]\cup(x_2,x_3]\cup\dots\cup(x_{2n},x_{2n+1}]
\end{equation*}
for $x_1\prec x_2\prec x_3\prec \dots\prec x_{2n+1}$ in $\pspace$ (if $n$ is $0$
we simply take this expression to be $[0_\pspace,x_1]$)
and
\begin{equation*}
  (x_2,x_3]\cup\dots\cup(x_{2n},x_{2n+1}]
\end{equation*}
for $x_2\prec x_3\prec \dots\prec x_{2n+1}$ in $\pspace$. Clearly, these events
form a field: the union and intersection of any two events in
$\plattice$ is again in $\plattice$, and the complement of any event
in $\plattice$ also is again in $\plattice$.

To simplify the description of this field, and the expression of
natural extension, we introduce an element $0_\pspace-$ such
that:\footnote{The cunning reader notes that one could avoid
  introducing $0_\pspace-$ by imposing
  $\ldf(0_\pspace)=\udf(0_\pspace)=0$. However, this leads to an
  apparent loss of generality when linking p-boxes to other
  uncertainty models, and a slightly more complicated equation for the
  natural extension---therefore, for our purpose, it is simpler to
  stick to the most general formulation.
  ($\ldf(1_\pspace)=\udf(1_\pspace)=1$ follows from coherence, so
  nothing is lost there.)}
\begin{gather*}
  0_\pspace-\prec x\text{ for all }x\in\pspace
  \\
  \df(0_\pspace-)=\ldf(0_\pspace-)=\udf(0_\pspace-)=0
\end{gather*}
In particular, $(0_\pspace-,x]=[0_\pspace,x]$.
If we let $\pspace^*=\pspace\cup\{0_\pspace-\}$, then
\begin{equation}\label{eq:plattice}
  \plattice=\{
  (x_0,x_1]\cup(x_2,x_3]\cup\dots\cup(x_{2n},x_{2n+1}]
  \colon
  x_0\prec x_1\prec \dots\prec x_{2n+1}\text{ in }\pspace^*\}.
\end{equation}

Since the procedure of natural extension is transitive, in order to
calculate the natural extension of $\lpbox$ to all gambles, we shall
first consider the extension from $\pdomain$ to $\plattice$, then
the natural extension from $\plattice$ to the set of all events,
and finally the natural extension from the set of all events to the
set of all gambles.

In the case of a precise p-box, $\lcdf$ has a unique extension to a
finitely additive probability measure on $\plattice$.

\begin{proposition}\label{prop:nex-lattice-precise}
  $\lnecdf$ restricted to $\plattice$ is a finitely additive probability measure. Moreover,
  for any $A\in\plattice$, that is
  $A=(x_0,x_1]\cup(x_2,x_3]\cup\dots\cup(x_{2n},x_{2n+1}]$ with $x_0\prec x_1\prec \dots\prec x_{2n+1}$ in $\pspace^*$, it holds that
  \begin{equation}\label{eq:nex-lattice-precise}
    \lnecdf(A)=\sum_{k=0}^{n}\left(\df(x_{2k+1})-\df(x_{2k})\right)
  \end{equation}
\end{proposition}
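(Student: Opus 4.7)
The plan is to use the envelope characterisation of natural extension to show that every linear prevision dominating $\lcdf$ actually agrees on $\plattice$, so the infimum in the definition of $\lnecdf$ collapses to a single value given by finite additivity.

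First, since $\lcdf$ is coherent (being a precise p-box), I would invoke the identity $\lnecdf(f)=\min_{\apr\in\solp(\lcdf)}\apr(f)$. The key observation is that any $\apr\in\solp(\lcdf)$ must satisfy both $\apr([0_\pspace,x])\geq\lcdf([0_\pspace,x])=\df(x)$ and $\apr((x,1_\pspace])\geq\lcdf((x,1_\pspace])=1-\df(x)$, the latter forcing $\apr([0_\pspace,x])\leq\df(x)$. Hence every dominating linear prevision has cumulative distribution function equal to $\df$, i.e.\ $\apr([0_\pspace,x])=\df(x)$ for every $x\in\pspace$ (and trivially $\apr([0_\pspace,0_\pspace-])=\apr(\emptyset)=0$).

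Second, for any $A=(x_0,x_1]\cup(x_2,x_3]\cup\dots\cup(x_{2n},x_{2n+1}]\in\plattice$ with $x_0\prec x_1\prec\dots\prec x_{2n+1}$ in $\pspace^*$, I would use finite additivity of $\apr$ on the pairwise-disjoint half-open intervals, combined with $(x_{2k},x_{2k+1}]=[0_\pspace,x_{2k+1}]\setminus[0_\pspace,x_{2k}]$, to get
\begin{equation*}
  \apr(A)=\sum_{k=0}^n\bigl(\apr([0_\pspace,x_{2k+1}])-\apr([0_\pspace,x_{2k}])\bigr)=\sum_{k=0}^n\bigl(\df(x_{2k+1})-\df(x_{2k})\bigr).
\end{equation*}
Because this value does not depend on $\apr$, it coincides with the minimum over $\solp(\lcdf)$, yielding formula~\eqref{eq:nex-lattice-precise}.

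Finally, finite additivity of $\lnecdf$ on $\plattice$ follows for free: since $\lnecdf$ restricted to $\plattice$ coincides with the restriction of any fixed $\apr\in\solp(\lcdf)$ (which is nonempty by coherence), and each such $\apr$ is already a finitely additive probability, $\lnecdf|_{\plattice}$ inherits this property. The main (minor) subtlety is the book-keeping with the artificial element $0_\pspace-$, needed to cover the case where $A$ starts with $[0_\pspace,x_1]$; the convention $\df(0_\pspace-)=0$ together with $(0_\pspace-,x_1]=[0_\pspace,x_1]$ makes the sum formula uniform across both forms described in \eqref{eq:plattice}.
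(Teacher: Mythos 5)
Your proof is correct. It differs from the paper's in the machinery used to pass from the key observation to additivity on $\plattice$. Both arguments hinge on the same squeeze: the assessments on $[0_\pspace,x]$ and on its complement $(x,1_\pspace]$ together pin down the value $\df(x)$ exactly. The paper expresses this as $\lnecdf([0_\pspace,x])=\unecdf([0_\pspace,x])=\df(x)$ and then invokes a cited result (Prop.~4.18 of the first author's thesis) stating that a natural extension is linear on the linear space spanned by the gambles where lower and upper extensions coincide; since $\plattice$ sits inside the span of $\{[0_\pspace,x]\colon x\in\pspace\}$, additivity and the formula follow at once. You instead work directly with the credal set $\solp(\lcdf)$: every dominating linear prevision is forced to have cumulative distribution function $\df$, you compute $\apr(A)$ explicitly by finite additivity of $\apr$ on the disjoint half-open intervals, and you observe the answer is independent of $\apr$, so the lower envelope collapses. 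Your route is more self-contained (it essentially re-proves the special case of the linearity-on-span result that the paper delegates to a citation) at the cost of being slightly longer; the paper's route is shorter but opaque to a reader without access to the cited proposition. Your handling of $0_\pspace-$ via $\df(0_\pspace-)=0$ and $[0_\pspace,0_\pspace-]=\emptyset$ correctly unifies the two forms of elements of $\plattice$, matching the paper's convention.
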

\begin{proof}
  Because
  \begin{align*}
    \lnecdf([0_\pspace,x])&=\lcdf([0_\pspace,x])=\df(x) \text{ and}\\
    \unecdf([0_\pspace,x])&=1-\lnecdf((x,1_\pspace])=1-\lcdf((x,1_\pspace])=\df(x)
  \end{align*}
  for all $x$, $\lnecdf$ is linear on the linear space spanned by
  $\{[0_\pspace,x]\colon x\in\pspace\}$ \cite[p.~102, Prop.~4.18]{troffaes2005}. This linear space includes $\plattice$,
  which proves that $\lnecdf$ is additive on $\plattice$, and consequently
  it is a finitely additive probability measure on $\plattice$.
  The expressions for $\lnecdf(A)$ follow immediately.
\end{proof}

We now extend Proposition~\ref{prop:nex-lattice-precise} to
p-boxes.
\begin{proposition}\label{prop:nex-lattice}
  For any $A\in\plattice$, that is
  $A=(x_0,x_1]\cup(x_2,x_3]\cup\dots\cup(x_{2n},x_{2n+1}]$ with $x_0\prec x_1\prec \dots\prec x_{2n+1}$ in $\pspace^*$,
  it holds that $\lnepbox(A)=\lpboxlattice(A)$, where
  \begin{equation}
    \label{eq:nex-lattice}
    \lpboxlattice(A)=
    \sum_{k=0}^{n}\max\{0,\ldf(x_{2k+1})-\udf(x_{2k})\}.
  \end{equation}
\end{proposition}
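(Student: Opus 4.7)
The plan is to combine the sensitivity-analysis formula~\eqref{eq:lower-envelope-df} with the additive formula of Proposition~\ref{prop:nex-lattice-precise}. Using these two together,
\begin{equation*}
  \lnepbox(A)=\inf_{\df\in\Phi(\ldf,\udf)}\lnecdf(A)=\inf_{\df\in\Phi(\ldf,\udf)}\sum_{k=0}^{n}\bigl(\df(x_{2k+1})-\df(x_{2k})\bigr),
\end{equation*}
so the proof reduces to minimising this sum over all cumulative distribution functions $\df$ pointwise bracketed between $\ldf$ and $\udf$.

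For the inequality $\lnepbox(A)\geq\lpboxlattice(A)$, I argue term by term: for every admissible $\df$ and every $k$, monotonicity of $\df$ gives $\df(x_{2k+1})-\df(x_{2k})\geq 0$, while the bracketing $\ldf\leq\df\leq\udf$ gives $\df(x_{2k+1})-\df(x_{2k})\geq\ldf(x_{2k+1})-\udf(x_{2k})$. Hence each term is at least $\max\{0,\ldf(x_{2k+1})-\udf(x_{2k})\}$, and summing yields the required inequality.

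For the reverse inequality $\lnepbox(A)\leq\lpboxlattice(A)$, I would exhibit a single $\df^*\in\Phi(\ldf,\udf)$ that attains all of these term-wise minima simultaneously. Letting $\sigma(x)$ denote the smallest element of $\{x_1,x_3,\dots,x_{2n+1},1_\pspace\}$ that is $\succeq x$, I define
\begin{equation*}
  \df^*(x)=\min\{\udf(x),\ldf(\sigma(x))\}.
\end{equation*}
Direct evaluation gives $\df^*(x_{2k})=\min\{\udf(x_{2k}),\ldf(x_{2k+1})\}$ and $\df^*(x_{2k+1})=\ldf(x_{2k+1})$, so $\df^*(x_{2k+1})-\df^*(x_{2k})$ equals exactly $\max\{0,\ldf(x_{2k+1})-\udf(x_{2k})\}$, term by term.

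The main obstacle is checking that $\df^*$ is a legitimate cumulative distribution function in $\Phi(\ldf,\udf)$. This amounts to three verifications: (i) $\ldf\leq\df^*\leq\udf$, which follows from $\sigma(x)\succeq x$ together with $\ldf$ non-decreasing and $\udf\geq\ldf$; (ii) $\df^*$ is non-decreasing, which holds because $\sigma$ is non-decreasing (smaller arguments admit more candidates in the defining minimum) and both $\ldf$ and $\udf$ are non-decreasing, so $\df^*$ is a minimum of two non-decreasing functions; and (iii) $\df^*(1_\pspace)=1$, which holds since $\sigma(1_\pspace)=1_\pspace$ and $\ldf(1_\pspace)=1$. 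Once this is established, Proposition~\ref{prop:nex-lattice-precise} applied with $\df=\df^*$ gives the matching upper bound via~\eqref{eq:lower-envelope-df}, completing the argument.
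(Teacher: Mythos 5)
Your proposal is correct and follows essentially the same route as the paper: both directions rest on the envelope formula~\eqref{eq:lower-envelope-df} together with the additivity of $\lnecdf$ on $\plattice$ from Proposition~\ref{prop:nex-lattice-precise}, with the lower bound obtained term by term and the upper bound obtained by exhibiting an extremal distribution function in $\Phi(\ldf,\udf)$. The only difference is cosmetic: the paper specifies the minimising $\df$ just at the points $x_0,\dots,x_{2n+1}$ and appeals to the existence of an interpolating cumulative distribution function, whereas your globally defined $\df^*(x)=\min\{\udf(x),\ldf(\sigma(x))\}$ makes that existence step fully explicit.
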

\begin{proof}
  We first show that $\lnepbox(A)\le\lpboxlattice(A)$. Consider a cumulative distribution function $\df$ in $\Phi(\ldf,\udf)$ which satisfies
  \begin{align*}
    \df(x_{2k})&=\udf(x_{2k})
    \\
    \df(x_{2k+1})&=\max\{\ldf(x_{2k+1}),\udf(x_{2k})\}
  \end{align*}
  for all $k=0,\dots,n$. Note that
  $0\le\df(x_0)\le\dots\le\df(x_{2n+1})\le 1$, so there is
  a cumulative distribution function satisfying the above equalities.  Secondly, note that for each $k=0,\dots,2n+1$, $\df$ satisfies
  $\ldf(x_k)\le\df(x_k)\le\udf(x_k)$.  Hence, there is indeed a cumulative distribution function $\df$ in
  $\Phi(\ldf,\udf)$ satisfying the above equalities.
  By Eq.~\eqref{eq:nex-lattice-precise},
  \begin{align*}
    \lnecdf(A)
    &=
    \sum_{k=0}^{n}\max\{\ldf(x_{2k+1}),\udf(x_{2k})\} - \udf(x_{2k})
    \\
    &=
    \sum_{k=0}^{n}\max\{0,\ldf(x_{2k+1})-\udf(x_{2k})\}
    =
    \lpboxlattice(A)
  \end{align*}
  with respect to $\df$. Using Eq.~\eqref{eq:lower-envelope-df}, we deduce that $\lpboxlattice(A)\ge\lnepbox(A)$.

  Next, we show that $\lnepbox(A)\ge\lpboxlattice(A)$.
  Let $\df$ be any cumulative distribution function in $\Phi(\ldf,\udf)$. Then,
  \begin{equation*}
  \lnecdf((x_{2k},x_{2k+1}])=\df(x_{2k+1})-\df(x_{2k})\ge\max\{0,\ldf(x_{2k+1})-\udf(x_{2k})\}
  \end{equation*}
  since
  $\df(x_{2k+1})\ge\ldf(x_{2k+1})$ and
  $-\df(x_{2k})\ge-\udf(x_{2k})$. But, because $\lnecdf$ is a finitely additive probability measure on $\plattice$
  (Proposition~\ref{prop:nex-lattice-precise}), $\lpboxlattice(A)\le\lnecdf(A)$. This holds for any $\df$ in
  $\Phi(\ldf,\udf)$, and hence Eq.~\eqref{eq:lower-envelope-df} implies that $\lpboxlattice(A)\le\lnepbox(A)$.
\end{proof}

Note that it is possible to derive a closed expression for the upper
prevision $\unepbox$ as well, similar to Eq.~\eqref{eq:nex-lattice},
however that expression is not as easy to work with. In practice, to
calculate $\unepbox(A)$ for an event $A\in\plattice$, it is by far
easiest first to calculate $\lnepbox(A^c)$ using
Eq.~\eqref{eq:nex-lattice} (observe that also $A^c\in\plattice$), and
then to apply the conjugacy relation:
\begin{equation*}
  \unepbox(A)=1-\lnepbox(A^c).
\end{equation*}

 The lower probability $\lpr_{\ldf,\udf}$ on
 $\pdomain$ determined by the p-box usually does not have a \emph{unique} coherent extension to the field
 $\plattice$ (unless $\ldf=\udf$),
as shown in the following example.

\begin{example}\label{ex:nex-lattice-not-unique}
 Consider the distribution functions $\ldf$ and $\udf$ given by $\udf(x)=x$ for $x \in [0,1]$ and
 $\ldf(x)=0$ if $x\leq 0.5$, $\ldf(x)=2(x-0.5)$ if $x \geq 0.5$.

 From Proposition~\ref{prop:nex-lattice-precise}, both $\lpr_{\ldf}$
 and $\lpr_{\udf}$ have a unique extension to the field $\plattice$.
 Let us define $\lpr_1$ on $\plattice$ by
 $\lpr_1(A):=\min\{\lpr_{\ldf}(A),\lpr_{\udf}(A)\}$ for all $A$.
 Then $\lpr_1$ is a coherent lower prevision on $\plattice$, and it is
 not difficult to show that $\lpr_1=\lpr_{\ldf,\udf}(A)$ for any $A \in \pdomain$.

 Now, given the interval $(0.5,0.6]$, we deduce from
 Proposition~\ref{prop:nex-lattice-precise} that
 \begin{align*}
  \lpr_1((0.5,0.6])&=\min \{\lpr_{\ldf}((0.5,0.6]),\lpr_{\udf}((0.5,0.6])\}\\&=\min\{\ldf(0.6)-\ldf(0.5),\udf(0.6)-\udf(0.5)\}=\min\{0.2,0.1\}=0.1.
 \end{align*}
 However, it follows from Proposition~\ref{prop:nex-lattice} that
 $\lnex_{\ldf,\udf}((0.5,0.6])=\max\{0,\ldf(0.6)-\udf(0.5)\}=\max\{0,0.2-0.5\}=0$.
\end{example}

\subsection{Inner Measure}

The inner measure $\lpboxlattice_*$ of the coherent lower
probability $\lpboxlattice$ defined in Eq.~\eqref{eq:nex-lattice}
coincides with $\lnepbox$ on all events \cite[Cor.~3.1.9,
p.~127]{walley1991}:
\begin{equation}\label{eq:inner-measure}
  \lnepbox(A)=\lpboxlattice_*(A)=\sup_{C\in\plattice,C\subseteq A}\lpboxlattice(C),
\end{equation}

The next example demonstrates that the choice of the preorder $\preceq$ can have a significant impact, even for the same $\ldf$ and $\udf$.

\begin{example}
\label{ex:ordering-importance}
Take $\pspace=\{0,1,2,3,4\}$ and consider:
\begin{equation*}
\ldf(x)=\udf(x)=
\begin{cases}
  0 & \text{if } x<2, \\
  1 & \text{otherwise}.
\end{cases}
\end{equation*}
Consider the total preorder $\preceq_1$ defined by $0 \simeq_1 1 \prec_1 2 \simeq_1 3 \simeq_1 4$ and the usual total ordering $\preceq_2$ defined by $0 \prec_2 1 \prec_2 2 \prec_2 3 \prec_2 4$. With $\preceq_1$, we have for any event $A\subseteq\pspace$ that
\begin{equation*}
  \lnepbox(A)=
  \begin{cases}
    1 & \text{if } \{2,3,4\}\subseteq A, \\
    0 & \text{otherwise}.
  \end{cases}
\end{equation*}
using Eqs.~\eqref{eq:nex-lattice} and~\eqref{eq:inner-measure}. However, with $\preceq_2$,
\begin{equation*}
  \lnepbox(A)=
  \begin{cases}
    1 & \text{if } 2\in A, \\
    0 & \text{otherwise}.
  \end{cases}
\end{equation*}
\end{example}

For ease of notation, from now onwards, we denote $\lnepbox$ by
$\lnex$ when no confusion about the cumulative distribution functions determining the p-box can arise.

In principle, the problem of natural extension to all events is
solved: simply calculate the inner measure as in
Eq.~\eqref{eq:inner-measure}, using Eq.~\eqref{eq:nex-lattice} to
calculate $\lpboxlattice(C)$ for elements $C$ in $\plattice$. However,
the inner measure still involves calculating a supremum, which may be
non-obvious. What we show next is that Eq.~\eqref{eq:nex-lattice} can
be extended to arbitrary events, by first taking the topological
interior with respect to a very simple topology, followed by a
(possibly infinite) sum over the so-called full components of this
interior.

\subsection{The Partition Topology}

Consider the \emph{partition topology} on $\pspace$ generated by the equivalence classes $[x]_\simeq$, that is, the topology generated by
\begin{equation*}
  \tau:=\{[x]_\simeq: x\in \pspace\}.
\end{equation*}
This topology is very similar to the discrete topology, except that it
is not Hausdorff, unless $\preceq$ is anti-symmetric: if $x\simeq y$
but $x\neq y$ then $x$ and $y$ cannot be topologically separated,
since every neighborhood of $x$ is also a neighborhood of $y$ and vice
versa. If $\preceq$ is anti-symmetric (for example, the usual ordering
on the reals is), then the partition topology reduces to the discrete
topology, that is, every set is clopen (closed and open).

The open sets in this topology are all unions of
equivalence classes (or, subsets of $\pspace/\simeq$, if you
like). Hence, in this topology, every open set is also closed. In
particular, every interval in $(\pspace,\preceq)$ is clopen.

The topological interior of a set $A$ is given by the union of all
equivalence classes contained in $A$:
\begin{equation}\label{eq:interior}
  \interior{A}
  =\bigcup\{[x]_\simeq\colon [x]_\simeq\subseteq A\}
\end{equation}
and the topological closure is given by the union of all equivalence
classes which intersect with $A$:
\begin{equation}\label{eq:closure}
  \closure{A}
  =\bigcup\{[x]_\simeq\colon [x]_\simeq\cap A\neq\emptyset\}.
\end{equation}

\begin{lemma}\label{lem:lnex-interior}
For any subset $A$ of $\pspace$, $\lnex(A)=\lnex(\interior{A})$ and $\unex(A)=\unex(\closure{A})$.
\end{lemma}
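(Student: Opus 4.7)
The plan is to exploit Equation~\eqref{eq:inner-measure}, which expresses $\lnex(A)$ as a supremum of $\lpboxlattice(C)$ over events $C\in\plattice$ with $C\subseteq A$, together with the observation that every element of $\plattice$ is open in the partition topology.

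First, I would verify that every event $C\in\plattice$ is open, i.e.\ a union of equivalence classes of $\simeq$. By Eq.~\eqref{eq:plattice}, it suffices to show that each basic interval $(x,y]$ with $x,y\in\pspace^*$ is a union of equivalence classes. If $z\in(x,y]$ and $z'\simeq z$, then $x\prec z\simeq z'$ and $z'\simeq z\preceq y$; transitivity together with the fact that $z'\preceq x$ would force $z\preceq x$ (contradicting $x\prec z$) yields $x\prec z'\preceq y$, so $z'\in(x,y]$ as well. Hence every $C\in\plattice$ is open.

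Next, I apply this to the inner-measure formula. Since $\interior{A}$ is the largest open set inside $A$ and every $C\in\plattice$ is open, the conditions $C\subseteq A$ and $C\subseteq\interior{A}$ are equivalent for $C\in\plattice$. Therefore
\begin{equation*}
  \lnex(A)
  =\sup_{\substack{C\in\plattice\\C\subseteq A}}\lpboxlattice(C)
  =\sup_{\substack{C\in\plattice\\C\subseteq\interior{A}}}\lpboxlattice(C)
  =\lnex(\interior{A}),
\end{equation*}
using Eq.~\eqref{eq:inner-measure} on both ends.

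Finally, the statement for $\unex$ follows by conjugacy. Indeed, for any set $A$ we have $(\closure{A})^c=\interior{A^c}$ by Eqs.~\eqref{eq:interior} and~\eqref{eq:closure}, so
\begin{equation*}
  \unex(A)=1-\lnex(A^c)=1-\lnex(\interior{A^c})=1-\lnex((\closure{A})^c)=\unex(\closure{A}),
\end{equation*}
finishing the proof. The only nontrivial step is the first one---checking that elements of $\plattice$ are unions of equivalence classes---and even that is a short verification using transitivity of $\preceq$; once it is in hand, the remainder is a direct manipulation of the inner-measure formula.
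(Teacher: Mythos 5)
Your proof is correct and follows essentially the same route as the paper: both arguments rest on the inner-measure formula of Eq.~\eqref{eq:inner-measure} together with the observation that every element of $\plattice$ is open in the partition topology (and hence contained in $\interior{A}$ whenever it is contained in $A$), with the $\unex$ case handled by conjugacy via $\closure{A}=(\interior{A^c})^c$. The only difference is cosmetic---you spell out the verification that intervals $(x,y]$ are unions of equivalence classes, which the paper takes for granted, and you phrase the conclusion as an equality of suprema rather than as two inequalities.
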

\begin{proof}
 Clearly $\lnex(\interior{A})\le\lnex(A)$ because $\interior{A}\subseteq A$. If we can also
 show that $\lnex(\interior{A})\ge\lnex(A)$ the desired result is established.

 Consider an element $C$ of the field $\plattice$ which is included in
 $A$. Since $C$ is in particular an open set in the partition topology,
 it is a subset of $\interior{A}$. The monotonicity of $\lnex$ implies that
 $\lnex(C)\leq \lnex(\interior{A})$. Consequently,
 \begin{equation*}
   \lnex(A)
   =\lpboxlattice_*(A)
   =\sup_{C\in\plattice,C\subseteq A}\lpboxlattice(C)
   =\sup_{C\in\plattice,C\subseteq A}\lnex(C)
   \le\lnex(\interior{A}).
 \end{equation*}
 The equality $\unex(A)=\unex(\closure{A})$ now follows from the fact that $\closure{A}=(\interior{A^c})^c$ and $\unex(A)=1-\lnex(A^c)$
\end{proof}

\subsection{Additivity on Full Components}

Next, we determine a constructive expression of the natural extension
$\lnex$ on the clopen subsets of $\pspace$.

\begin{definition}\cite[\S 4.4]{schechter1997}\label{def:full}
A set $S\subseteq\pspace$ is called \emph{full} if $[a,b]\subseteq S$ for any $a\preceq
b$ in $S$.
\end{definition}

What do these full sets look like? Obviously, any full set is clopen,
as it must be a union of equivalence classes.

\begin{lemma}\label{lem:full-is-clopen}
  Every full set is clopen.
\end{lemma}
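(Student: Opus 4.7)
The plan is to use the characterization of the partition topology established in the excerpt: a set is open (and automatically closed, hence clopen) precisely when it is a union of equivalence classes $[x]_\simeq$. So to prove a full set $S$ is clopen, it suffices to show $S = \bigcup_{x\in S}[x]_\simeq$, which reduces to verifying that $[x]_\simeq \subseteq S$ whenever $x \in S$.

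The key observation is that the definition of fullness, applied with $a = b = x$, yields $[x,x] \subseteq S$ for every $x \in S$ (since trivially $x \preceq x$). Unpacking the interval notation, $[x,x] = \{z \in \pspace \colon x \preceq z \preceq x\}$, which is exactly the equivalence class $[x]_\simeq$. Hence $[x]_\simeq \subseteq S$ for every $x \in S$, so $S$ is a union of equivalence classes and therefore open in the partition topology. Because every open set in this topology is also closed (its complement is again a union of equivalence classes), $S$ is clopen.

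There is really no obstacle here: the whole content is unwinding the definitions. The only thing worth flagging is the trivial-but-essential identity $[x,x] = [x]_\simeq$, which is what makes fullness force $S$ to respect the equivalence classes. The proof will be two or three lines long, with no appeal to properties beyond reflexivity of $\preceq$ and the description of open sets given just before the lemma.
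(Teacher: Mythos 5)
Your proof is correct and is essentially identical to the paper's one-line argument: observe that $[x,x]=[x]_\simeq$ and apply the definition of fullness, so a full set is a union of equivalence classes and hence clopen in the partition topology. You have simply spelled out the same reasoning in more detail.
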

\begin{proof}
  Observe that $[a,a]=[a]_\simeq$, and apply Definition~\ref{def:full}.
\end{proof}

Under an additional completeness assumption, the full sets are
precisely the intervals.

\begin{lemma}\label{lem:order-complete-intervals}
  If $\pspace/\simeq$ is order
  complete, that is, if every subset of $\pspace/\simeq$ has a supremum
  (minimal upper bound) and infimum (maximal lower bound), then every
  full set is an interval, that is, it can be written as $[x,y]$,
  $[x,y)$, $(x,y]$, or $(x,y)$, for some $x$, $y$ in
$\pspace$.
\end{lemma}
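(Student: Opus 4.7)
The plan is to reduce the full set $S$ to a subset of the quotient $\pspace/\simeq$, use order completeness there to extract endpoints, and then show that $S$ sits between the open and closed intervals with these endpoints.

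First I would invoke Lemma~\ref{lem:full-is-clopen}: since $S$ is full, it is open in the partition topology and hence a union of equivalence classes. This lets me pass to $\widetilde{S} := \{[z]_\simeq : z \in S\} \subseteq \pspace/\simeq$ without losing information, since $z \in S$ if and only if $[z]_\simeq \in \widetilde{S}$. If $S$ is empty, $S = (1_\pspace, 1_\pspace)$ is already an interval, so from here on I would assume $S \neq \emptyset$ and hence $\widetilde{S} \neq \emptyset$.

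By the assumed order completeness of $\pspace/\simeq$, the set $\widetilde{S}$ has an infimum and a supremum in $\pspace/\simeq$; pick any representatives $x$ and $y$ in $\pspace$, so that $[x]_\simeq = \inf \widetilde{S}$ and $[y]_\simeq = \sup \widetilde{S}$. The containment $S \subseteq [x,y]$ is immediate from the definitions, since every $z \in S$ satisfies $[x]_\simeq \preceq [z]_\simeq \preceq [y]_\simeq$, which gives $x \preceq z \preceq y$.

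For the other direction, I would show $(x,y) \subseteq S$. Let $z$ satisfy $x \prec z \prec y$. Then $[z]_\simeq$ is strictly greater than the infimum $[x]_\simeq$, so $[z]_\simeq$ is not itself a lower bound of $\widetilde{S}$; by totality of $\preceq$, there exists $a \in S$ with $a \prec z$. Symmetrically, there exists $b \in S$ with $z \prec b$. Fullness of $S$ then gives $z \in [a,b] \subseteq S$. Combining, $(x,y) \subseteq S \subseteq [x,y]$, and since $S$ is a union of equivalence classes, whether $x \in S$ and whether $y \in S$ determines whether the endpoints of the corresponding interval are closed or open. Hence $S$ equals one of $[x,y]$, $[x,y)$, $(x,y]$, or $(x,y)$, as required.

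The only delicate point is making sure the infimum/supremum arguments live in the right set; everything else is routine once we observe that fullness plus totality yields the standard characterization of order-convex sets as intervals. The order completeness hypothesis is used exactly once, to guarantee the existence of $x$ and $y$.
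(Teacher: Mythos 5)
Your proof is correct and follows essentially the same route as the paper's: pass to the quotient via Lemma~\ref{lem:full-is-clopen}, take the infimum and supremum there, and use fullness plus totality to squeeze $S$ between $(x,y)$ and $[x,y]$, with endpoint membership deciding which of the four interval types occurs. You simply fill in the details the paper leaves to the reader (the "apply the definitions of $\inf$ and $\sup$" step and the empty case), so no further comment is needed.
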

\begin{proof}
  Consider a full set $S$ in $\pspace$. Since, by Lemma~\ref{lem:full-is-clopen}, $S$ is clopen,
  we may consider $S$ as a subset of
  $\pspace/\simeq$.  So $[x]_{\simeq}=\inf S$ and $[y]_{\simeq}=\sup
  S$ exist, by the order completeness of $\pspace/\simeq$. Consider
  the case $x\in S$ and $y\not\in S$. Apply the definitions of $\inf$
  and $\sup$ to establish that $z\in S$ if and only if $x\preceq
  z\prec y$. The other three cases are proven similarly.
\end{proof}

Note that $\pspace/\simeq$ can be made order complete via the Dedekind
completion \cite[\S 4.34]{schechter1997}.

\begin{definition}\cite[\S 4.4]{schechter1997}\label{def:components}
Given a clopen set $A\subseteq\pspace$ and an element $x$ of $A$, the \emph{full
component} $C(x,A)$ of $x$ in $A$ is the largest full set $S$ which
satisfies $x\in S\subseteq A$.
\end{definition}

\begin{lemma}\label{lem:full-components-partition}
  The full components of any clopen set $A$ form a partition of $A$.
\end{lemma}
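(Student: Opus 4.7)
The plan is to verify the three defining properties of a partition: non-emptiness of each block, covering of $A$, and pairwise disjointness (or equality) of distinct blocks. Before any of this, I first need to check that $C(x,A)$ is actually well-defined, i.e.\ that the \emph{largest} full subset of $A$ containing $x$ exists. The non-emptiness and covering properties will fall out as easy by-products of this well-definedness argument.

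The well-definedness reduces to showing that the union of a family of full sets that all share a common point is again full. Let $\mathcal{F}$ be the family of full subsets of $A$ containing $x$. Note $\mathcal{F}$ is non-empty because $[x]_\simeq$ is itself full (it is trivially closed under taking intervals between its points) and is contained in $A$ since $A$ is clopen. For $S:=\bigcup\mathcal{F}$, to check fullness I take $a\preceq b$ in $S$, say $a\in S_1\in\mathcal{F}$ and $b\in S_2\in\mathcal{F}$, and use the total preorder to place $x$ relative to $a$ and $b$: in each of the three cases ($x\preceq a$, $a\preceq x\preceq b$, or $b\preceq x$), the interval $[a,b]$ lands inside $S_1\cup S_2$ by splitting at $x$ and applying the fullness of $S_1$ and $S_2$. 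Thus $S\in\mathcal{F}$ and is maximal, so $C(x,A)=S$. Since $x\in C(x,A)$, each component is non-empty and $A=\bigcup_{x\in A}C(x,A)$.

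The main content is disjointness. Suppose $C(x,A)\cap C(y,A)\neq\emptyset$ and pick $z$ in the intersection. I claim $C(x,A)\cup C(y,A)$ is a full subset of $A$ containing $x$. To verify fullness, take $a\preceq b$ with, without loss of generality, $a\in C(x,A)$ and $b\in C(y,A)$, and now use the common element $z$ instead of $x$: comparing $z$ with $a$ and $b$ via the total preorder gives three cases, and in each of them I split $[a,b]$ at $z$ and invoke fullness of $C(x,A)$ (which contains $a$ and $z$) and of $C(y,A)$ (which contains $z$ and $b$) to conclude $[a,b]\subseteq C(x,A)\cup C(y,A)$. By maximality of $C(x,A)$, we get $C(y,A)\subseteq C(x,A)$, and symmetrically $C(x,A)\subseteq C(y,A)$, so the two components coincide.

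The only genuinely subtle step is the two fullness-of-union arguments, which both hinge on the same trick: use totality of $\preceq$ together with a common point (first $x$, then $z$) as a pivot at which to split the interval $[a,b]$. Everything else is routine bookkeeping. Observe that clopenness of $A$ is used essentially only to guarantee that $[x]_\simeq\subseteq A$ for every $x\in A$, so that $\mathcal{F}$ is non-empty to begin with.
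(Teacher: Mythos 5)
Your proof is correct, and it is essentially the standard argument that the paper itself does not spell out: the paper's proof is a one-line citation to Schechter's analogous result for total orders, while you supply the full details (the key point in both being that a union of full sets sharing a common point is again full, proved by using totality of $\preceq$ to split $[a,b]$ at the common point). The only cosmetic gap is that in the disjointness step the ``without loss of generality'' silently absorbs the cases where $a$ and $b$ lie in the same component, but those are immediate from fullness of that component, so nothing is missing.
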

\begin{proof}
  This is easily shown using a similar result for total orders given
  in \cite[4.4(a)]{schechter1997}.
\end{proof}

In the following theorem, we prove that the natural extension $\lnex$
is additive on full components. Recall that the sum of a (possibly
infinite) family $(x_\lambda)_{\lambda\in\Lambda}$ of non-negative
real numbers is defined as
\begin{equation*}
  \sum_{\lambda\in\Lambda}x_\lambda=\sup_{\substack{L\subseteq\Lambda\\L\text{ finite}}}\sum_{\lambda\in L}x_\lambda
\end{equation*}
If the outcome of the above sum is a finite number, at most
countably many of the $x_\lambda$'s are non-zero
\cite[10.40]{schechter1997}.

\begin{theorem}\label{thm:component-additivity}
  Let $B$ be a clopen subset of $\pspace$. Let
  $(B_\lambda)_{\lambda\in\Lambda}$ be the full components of $B$, and
  let $(C_\lambda)_{\lambda\in\Lambda'}$ be the full components of
  $B^c$. Then
  \begin{align*}
    \lnex(B)&=\sum_{\lambda\in\Lambda}\lnex(B_\lambda)
    \\
    \unex(B)&=1-\sum_{\lambda\in\Lambda'}\lnex(C_\lambda)
  \end{align*}
\end{theorem}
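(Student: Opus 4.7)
The plan is to prove the first identity for $\lnex(B)$; the one for $\unex(B)$ then follows immediately by conjugacy, since $\unex(B)=1-\lnex(B^c)$ and the $C_\lambda$ are, by definition, the full components of $B^c$.

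For the inequality $\lnex(B)\ge\sum_{\lambda\in\Lambda}\lnex(B_\lambda)$, I would exploit that by Lemma~\ref{lem:full-components-partition} the $B_\lambda$ form a pairwise disjoint family whose union is $B$. Superadditivity of coherent lower previsions (property (C3)) extended by induction to any finite subfamily $\{B_{\lambda_1},\dots,B_{\lambda_m}\}$ gives
\begin{equation*}
  \sum_{i=1}^m \lnex(B_{\lambda_i})\le\lnex\!\left(\bigcup_{i=1}^m B_{\lambda_i}\right)\le\lnex(B),
\end{equation*}
and taking the supremum over finite $L\subseteq\Lambda$ yields the desired inequality, by definition of a sum of non-negative reals.

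The reverse inequality $\lnex(B)\le\sum_{\lambda\in\Lambda}\lnex(B_\lambda)$ is the main step. I would use Eq.~\eqref{eq:inner-measure}, which expresses $\lnex(B)$ as the supremum of $\lpboxlattice(C)$ over $C\in\plattice$ with $C\subseteq B$. Fix such a $C$ and write it as $(x_0,x_1]\cup(x_2,x_3]\cup\dots\cup(x_{2n},x_{2n+1}]$ as in Eq.~\eqref{eq:plattice}. The crucial observation is that each interval $(x_{2k},x_{2k+1}]$ is itself a full set (any element lying between two elements of the interval is again in the interval, by transitivity of $\preceq$), and since it is contained in the clopen set $B$, it lies inside exactly one full component $B_{\lambda(k)}$ of $B$. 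Group the indices: for each $\lambda\in\Lambda$ let $I_\lambda=\{k:\lambda(k)=\lambda\}$. Then $\bigcup_{k\in I_\lambda}(x_{2k},x_{2k+1}]$ is an element of $\plattice$ contained in $B_\lambda$, so Eq.~\eqref{eq:nex-lattice} together with Eq.~\eqref{eq:inner-measure} gives
\begin{equation*}
  \sum_{k\in I_\lambda}\max\{0,\ldf(x_{2k+1})-\udf(x_{2k})\}
  =\lpboxlattice\!\left(\bigcup_{k\in I_\lambda}(x_{2k},x_{2k+1}]\right)\le\lnex(B_\lambda).
\end{equation*}
Summing over $\lambda$ and using Eq.~\eqref{eq:nex-lattice} to recognise $\lpboxlattice(C)$ on the left gives $\lpboxlattice(C)\le\sum_{\lambda\in\Lambda}\lnex(B_\lambda)$. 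Taking the supremum over $C$ finishes the argument.

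The main obstacle is the careful handling of the (possibly infinite) family of full components: one has to ensure that the partition argument goes through even when $\Lambda$ is uncountable. This is not really a difficulty, because every candidate $C\in\plattice$ is by definition a \emph{finite} union of intervals, so for each fixed $C$ only finitely many components $B_\lambda$ are relevant, and the reindexing used above is always well-defined and finite. The supremum over $C$ then automatically produces the general (possibly infinite) sum on the right-hand side via the definition of $\sum_{\lambda\in\Lambda}$ as a supremum over finite subsets.
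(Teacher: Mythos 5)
Your proposal is correct and follows essentially the same route as the paper's proof: superadditivity over finite subfamilies for the lower bound, and for the upper bound the decomposition of an approximating $C\in\plattice$ into its intervals, each of which (being full and contained in $B$) sits inside a unique full component $B_\lambda$, followed by additivity on $\plattice$ and monotonicity. The only cosmetic differences are that you work directly with the supremum in Eq.~\eqref{eq:inner-measure} where the paper uses an $\epsilon$-argument, and that you phrase the key containment via fullness of the intervals rather than identifying them as the full components of $C$; both are equivalent.
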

\begin{proof}
 Since $(B_\lambda)_{\lambda\in\Lambda}$ is a partition of $B$, and because the functional $\lnex$ is monotone and super-additive,
  \begin{equation*}
    \lnex(B)
    =\lnex(\cup_{\lambda\in\Lambda}B_\lambda)
    \ge
    \lnex(\cup_{\lambda\in L} B_\lambda)
    \ge
    \sum_{\lambda\in L}\lnex(B_\lambda)
  \end{equation*}
  for every finite subset $L$ of $\Lambda$, and consequently
  $\lnex(B)\ge\sum_{\lambda\in\Lambda}\lnex(B_\lambda)$.
  We are left to show that also
  $\lnex(B)\le\sum_{\lambda\in\Lambda}\lnex(B_\lambda)$.

  Let $\epsilon>0$. By Eq.~\eqref{eq:inner-measure}, the inner measure coincides with the natural extension, so there is a $C$ in $\plattice$ such that
  $C\subseteq B$ and
  $\lnex(B)\le\lnex(C)+\epsilon$. From Eq.~\eqref{eq:plattice}, we can write
  \begin{equation*}
  C=(x_0,x_1]\cup(x_2,x_3]\cup\dots\cup(x_{2n},x_{2n+1}]
  \end{equation*}
  for some $x_0\prec x_1\prec \dots\prec x_{2n+1}$ in $\pspace^*$.
  Let us denote $C_k:=(x_{2k},x_{2k+1}]$ for $k=0$, \dots, $n$. It
  is easily established that $C_0$, \dots, $C_n$ are the full components of $C$.

  Applying Proposition~\ref{prop:nex-lattice} twice, and using the fact that each full component $C_k$ of $C$ must be a subset of exactly one full component $B_\lambda$ of $B$ (this is a consequence of $C\subseteq B$), we find that
  \begin{equation*}
    \lnex(C)
    =\sum_{k=0}^n \lnex(C_k)
    =\sum_{\lambda\in\Lambda}\lnex(\cup_{k\colon C_k\subseteq B_\lambda}C_k)
  \end{equation*}
  (note that the union is $\emptyset$ for those $\lambda$ where no $C_k\subseteq B_\lambda$---so only a finite number of terms can be non-zero in the latter sum) and consequently
  \begin{equation*}
    \lnex(B)
    \le\lnex(C)+\epsilon
    =\sum_{\lambda\in\Lambda}\lnex(\cup_{k\colon C_k\subseteq B_\lambda}C_k)+\epsilon
    \le\sum_{\lambda\in\Lambda}\lnex(B_\lambda)+\epsilon,
  \end{equation*}
  taking into account the monotonicity
  of $\lnex$. As this holds for any $\epsilon>0$, we arrive at the desired inequality.

  The expression for $\unex$ simply follows from the conjugacy relation $\unex(B)=1-\lnex(B^c)$, once noted that $B^c$ is clopen as well.
\end{proof}

In other words, the natural extension $\lnex$ of a p-box is \emph{arbitrarily
additive on full components}. In particular, interestingly, it is
$\sigma$-additive on full components (but obviously not additive, let
alone $\sigma$-additive, on arbitrary events).

\begin{example}
  Additivity on full components is not sufficient for a lower
  probability to be equivalent to a p-box, even in the finite
  case. For example, consider $\pspace=\{1,2,3\}$ with the usual
  ordering, so $\plattice=\wp(\pspace)$. Let $\lpr$ be the lower probability
  defined by
  \begin{equation*}
    \lpr(\{1\})=\lpr(\{2\})=\lpr(\{3\})=0.1
  \end{equation*}
  It can be checked that $\lpr$ is coherent, and that the natural extension
  $\lnex$ of $\lpr$ is the lower envelope of the probability mass functions
  $(0.8,0.1,0.1)$, $(0.1,0.8,0.1)$, and
  $(0.1,0.1,0.8)$. Moreover, $\lnex$ is additive on full components, because
  \begin{equation*}
    \lnex(\{1,3\})=\lnex(\{1\})+\lnex(\{3\})
  \end{equation*}
  (every other subset of $\pspace$ is already full). However,
  \begin{equation*}
    \lnex(\{2\})\neq\max\{0,\lnex(\{1,2\})-\unex(\{1\})\}
  \end{equation*}
  because $\lnex(\{1,2\})=0.2$ and $\unex(\{1\})=0.8$. This shows that $\lnex$
  violates Proposition~\ref{prop:nex-lattice} and as a consequence it is not the natural extension to events of a
  p-box.
\end{example}

\subsection{Summary}

Let us summarize all results so far, and explain how, in practice, $\lnex(A)$ and $\unex(A)$ of an arbitrary event $A$ can be calculated.

Proposition~\ref{prop:nex-lattice} gave the natural extension to the field
$\plattice$; we are now in a position to generalize it to all events,
at least when $\pspace/\simeq$ is order complete.

Indeed, consider an arbitrary event $A$. By Lemma~\ref{lem:lnex-interior}, it
suffices to calculate the natural extension of $\interior{A}$ or $\closure{A}$.
Calculating the interior or closure with respect to the partition topology will usually be trivial (see examples further on), and
moreover, the
topological interior or closure of a set is always clopen, so now we only need to know
the natural extension of clopen sets.

Now, by Theorem~\ref{thm:component-additivity}, it
follows that we only need to calculate the natural extension of the
full components $(B_\lambda)_{\lambda\in\Lambda}$ of
$\interior{A}$ or the full components $(C_\lambda)_{\lambda\in\Lambda}$ of $\closure{A}^c=\interior{A^c}$---note that each of these full components
is also clopen. Also, finding the full components will often be a trivial operation---commonly, there will only be a few.

But, by Lemma~\ref{lem:order-complete-intervals}, if, in addition,
$\pspace/\simeq$ is order complete, then each full component is an
interval. And for intervals, we immediately infer from
Proposition~\ref{prop:nex-lattice} and Eq.~\eqref{eq:inner-measure}
that:
\begin{subequations}\label{eq:nex-intervals}
\begin{align}
\lnex((x,y])&=\max\{0,\ldf(y)-\udf(x)\}
\\
\lnex((x,y))&=\max\{0,\ldf(y-)-\udf(x)\}
\\
\lnex([x,y])&=
\begin{cases}
  \max\{0,\ldf(y)-\udf(x)\} & \text{ if $x$ has no immediate predecessor}
  \\
  \max\{0,\ldf(y)-\udf(x-)\} & \text{ if $x$ has an immediate predecessor}
\end{cases}
\\
\lnex([x,y))&=
\begin{cases}
  \max\{0,\ldf(y-)-\udf(x)\} & \text{ if $x$ has no immediate predecessor}
  \\
  \max\{0,\ldf(y-)-\udf(x-)\} & \text{ if $x$ has an immediate predecessor}
\end{cases}
\end{align}
\end{subequations}
for any $x\prec y$ in $\pspace$,\footnote{In case $x=0_\pspace$,
evidently, $0_\pspace-$ is the immediate predecessor.} where
$\ldf(y-)$ denotes $\sup_{z\prec y}\ldf(z)$ and similarly for
$\udf(x-)$. The equalities hold because, if $x\prec y$ in $\pspace$,
and $x-$ is an immediate predecessor of $x$, then $[x,y]=(x-,y]$
and $[x,y)=(x-,y)$. Recall also that
$\ldf(0_\pspace-)=\udf(0_\pspace-)=0$ by convention. If
$\pspace/\simeq$ is finite, then one can think of $z-$ as the
immediate predecessor of $z$ in the quotient space $\pspace/\simeq$
for any $z \in \pspace$;

In other words, we have a simple constructive means of calculating the
natural extension of any event.

\subsection{Special Cases}

The above equations hold for any $(\pspace,\preceq)$ with order
complete quotient space. In most cases in practice, either
\begin{itemize}
\item $\pspace/\simeq$ is finite, or
\item $\pspace/\simeq$ is connected
  meaning that for any two elements $x\prec y$ in $\pspace$ there is a
  $z$ in $\pspace$ such that $x\prec z\prec y$,\footnote{This
    terminology stems from the fact that, in this case,
    $\pspace/\simeq$ is connected with respect to the order topology
    \cite[\S 15.46(6)]{schechter1997}.} (this is the case for instance when $\pspace$ is a closed
  interval in $\reals$ and $\preceq$ is the usual ordering of reals)
\end{itemize}
Moreover, if $\pspace/\simeq$ is connected, then $\ldf$ will
commonly satisfy $\ldf(y-)=\ldf(y)$ for all $y$ in $\pspace$. For
example, in case $\pspace$ is a closed interval in $\reals$, this
happens precisely when $\ldf(0)=0$ and $\ldf$ is left-continuous in
the usual sense.

Obviously, if $\pspace/\simeq$ is finite, then every element of
$\pspace$ has an immediate predecessor (remember, we take the immediate predecessor of
$0_\pspace$ to be $0_\pspace-$), and if $\pspace/\simeq$ is connected,
then no element except $0_\pspace$ has an immediate predecessor.

By Lemma~\ref{lem:lnex-interior}, Theorem~\ref{thm:component-additivity}, and Eqs.~\eqref{eq:nex-intervals}, we conclude:

\begin{corollary}\label{cor:nex-finite}
  If $\pspace/\simeq$ is finite, then every full set $B$ is of the
  form $[a,b]$, and
  \begin{align*}
    \lnex(A)&=\sum_{\lambda\in\Lambda}\max\{0,\ldf(b_\lambda)-\udf(a_\lambda-)\}
    \\
    \unex(A)&=1-\sum_{\lambda\in\Lambda'}\max\{0,\ldf(b'_\lambda)-\udf(a'_\lambda-)\}
  \end{align*}
  where $([a_\lambda,b_\lambda])_{\lambda\in\Lambda}$ are the full
  components of $\interior{A}$, and
  $([a'_\lambda,b'_\lambda])_{\lambda\in\Lambda'}$ are the full
  components of $\interior{A^c}=\closure{A}^c$.
\end{corollary}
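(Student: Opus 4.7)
The plan is to assemble this corollary directly from the machinery already built: Lemma~\ref{lem:order-complete-intervals}, Lemma~\ref{lem:lnex-interior}, Theorem~\ref{thm:component-additivity}, and Eqs.~\eqref{eq:nex-intervals}. The finiteness hypothesis plays two roles: it guarantees order-completeness of $\pspace/\simeq$, and it ensures that every element admits an immediate predecessor (with the convention that the predecessor of $0_\pspace$ is $0_\pspace-$), which selects the right branch in Eqs.~\eqref{eq:nex-intervals}.

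First I would verify the shape claim for full sets. Finiteness of $\pspace/\simeq$ trivially implies order-completeness, so by Lemma~\ref{lem:order-complete-intervals} every full set is an interval $[x,y]$, $[x,y)$, $(x,y]$, or $(x,y)$. Since each such interval is a finite union of equivalence classes, it has both a minimum and maximum equivalence class; representing these by arbitrary members $a$ and $b$, the set equals $[a,b]$. (The half-open and open forms collapse onto the closed form because finiteness provides the immediate successor and predecessor needed to absorb the endpoints.)

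Next I would compute $\lnex(A)$. By Lemma~\ref{lem:lnex-interior}, $\lnex(A)=\lnex(\interior{A})$. Since $\interior{A}$ is clopen, Theorem~\ref{thm:component-additivity} gives
\begin{equation*}
\lnex(\interior{A})=\sum_{\lambda\in\Lambda}\lnex(B_\lambda),
\end{equation*}
where $(B_\lambda)_{\lambda\in\Lambda}$ are the full components of $\interior{A}$. By the first paragraph each $B_\lambda=[a_\lambda,b_\lambda]$. Because $\pspace/\simeq$ is finite, $a_\lambda$ has an immediate predecessor $a_\lambda-$, so the relevant case of Eqs.~\eqref{eq:nex-intervals} yields $\lnex([a_\lambda,b_\lambda])=\max\{0,\ldf(b_\lambda)-\udf(a_\lambda-)\}$. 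Substituting gives the claimed formula for $\lnex(A)$.

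Finally, for $\unex(A)$ I would use the conjugacy $\unex(A)=1-\lnex(A^c)$ and apply the result just proved to $A^c$, noting that the full components of $\interior{A^c}$ are exactly the $[a'_\lambda,b'_\lambda]$ listed in the statement (and that $\closure{A}^c=\interior{A^c}$ by Eqs.~\eqref{eq:interior}--\eqref{eq:closure}). This produces the stated expression. There is no real obstacle here: the only subtlety is pinning down the ``$x-$'' branch of Eqs.~\eqref{eq:nex-intervals}, which is forced by finiteness, and verifying that full components in the finite setting really are closed intervals $[a,b]$ rather than half-open or open forms---both handled in the first step.
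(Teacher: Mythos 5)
Your proposal is correct and follows the paper's own route exactly: the paper derives this corollary precisely by combining Lemma~\ref{lem:lnex-interior}, Theorem~\ref{thm:component-additivity}, and the immediate-predecessor branch of Eqs.~\eqref{eq:nex-intervals}, with finiteness supplying order completeness so that Lemma~\ref{lem:order-complete-intervals} makes every full component a closed interval $[a,b]$. Your filling-in of the details (including the conjugacy step for $\unex$) is exactly what the paper leaves implicit.
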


\begin{corollary}\label{cor:nex-connected}
  If $\pspace/\simeq$ is order complete and connected, and
  $\ldf(y-)=\ldf(y)$ for all $y$ in $\pspace$, then
  \begin{align*}
    \lnex(A)&=\sum_{\lambda\in\Lambda}\max\{0,\ldf(\sup B_\lambda)-\udf(\inf B_\lambda)\}
    \\
    \unex(A)&=1-\sum_{\lambda\in\Lambda'}\max\{0,\ldf(\sup C_\lambda)-\udf(\inf C_\lambda)\}
  \end{align*}
  where $(B_\lambda)_{\lambda\in\Lambda}$ are the full components of
  $\interior{A}$
  and $(C_\lambda)_{\lambda\in\Lambda'}$ are the full components of
  $\interior{A^c}=\closure{A}^c$.
\end{corollary}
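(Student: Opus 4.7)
The plan is to chain together Lemma~\ref{lem:lnex-interior}, Theorem~\ref{thm:component-additivity}, Lemma~\ref{lem:order-complete-intervals}, and Equations~\eqref{eq:nex-intervals} in sequence. First, Lemma~\ref{lem:lnex-interior} reduces the calculation to $\lnex(\interior{A})$. Since $\interior{A}$ is clopen, Theorem~\ref{thm:component-additivity} decomposes it additively into its full components $(B_\lambda)_{\lambda\in\Lambda}$, giving $\lnex(A)=\sum_{\lambda\in\Lambda}\lnex(B_\lambda)$. Order completeness of $\pspace/\simeq$ then lets us apply Lemma~\ref{lem:order-complete-intervals} to conclude that each $B_\lambda$ is an interval, so that Equations~\eqref{eq:nex-intervals} apply term-by-term.

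The heart of the proof is to check that, under the two extra hypotheses, the four cases in Equations~\eqref{eq:nex-intervals} collapse to the single expression $\max\{0,\ldf(\sup B_\lambda)-\udf(\inf B_\lambda)\}$. Connectedness of $\pspace/\simeq$ means that $\pspace/\simeq$ is dense, so no element of $\pspace$ other than $0_\pspace$ admits an immediate predecessor; only the "no immediate predecessor" branch in Equations~\eqref{eq:nex-intervals} is therefore relevant. Left-continuity $\ldf(y-)=\ldf(y)$ then erases the distinction between the half-open and closed forms at the upper endpoint. Writing $x=\inf B_\lambda$ and $y=\sup B_\lambda$, all four cases accordingly reduce to $\max\{0,\ldf(y)-\udf(x)\}$; summing over $\lambda$ yields the claimed formula for $\lnex(A)$.

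The formula for $\unex(A)$ is obtained by applying the same reasoning to $A^c$ via conjugacy: $\unex(A)=1-\lnex(A^c)=1-\lnex(\interior{A^c})=1-\lnex(\closure{A}^c)$, after which the already-established expression for $\lnex$ on a clopen set, applied to the full components $(C_\lambda)_{\lambda\in\Lambda'}$ of $\interior{A^c}$, produces the stated sum. The main obstacle is the case analysis unifying the four branches of Equations~\eqref{eq:nex-intervals}; this is mostly bookkeeping, but attention is required for full components meeting the boundary element $0_\pspace$, whose immediate predecessor $0_\pspace-$ in $\pspace^*$ must be handled through the convention $\udf(0_\pspace-)=0$ (together with $\ldf(0_\pspace)=\ldf(0_\pspace-)=0$, which is a consequence of the left-continuity hypothesis).
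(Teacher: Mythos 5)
Your proposal is correct and follows exactly the paper's own route: the paper proves this corollary by the one-line chain ``Lemma~\ref{lem:lnex-interior}, Theorem~\ref{thm:component-additivity}, and Eqs.~\eqref{eq:nex-intervals}'', which is precisely your decomposition, and your case analysis collapsing the four branches of Eqs.~\eqref{eq:nex-intervals} under connectedness and left-continuity is the intended (but unwritten) verification. You are in fact more careful than the paper about the component containing $0_\pspace$, where the literal formula reads $\udf(\inf B_\lambda)=\udf(0_\pspace)$ but the correct value uses $\udf(0_\pspace-)=0$ --- the paper only hints at this with its remark ``Beware of $\ldf(0_\pspace)=\ldf(0_\pspace-)=0$ in the last corollary.''
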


Beware of $\ldf(0_\pspace)=\ldf(0_\pspace-)=0$ in the last corollary.

\subsection{Example}
\label{sec:example-diagonal}

Let's investigate a particular type of p-boxes on the unit square
$[0,1]^2$. First, we must specify a preorder on $\pspace$. A natural
yet naive way of doing so is, for instance, saying that
$(x_1,y_1)\preceq (x_2,y_2)$ whenever
\begin{equation*}
  x_1+y_1\le x_2+y_2
\end{equation*}
Consider a p-box $(\ldf,\udf)$ on $([0,1]^2,\preceq)$. Since $\ldf$ is
required to be non-decreasing with respect to $\preceq$, it follows
that $\ldf(x,y)$ is constant on elements of $[0,1]^2/\simeq$, which
means that $F(x_1,y_1)=F(x_2,y_2)$ whenever $x_1+y_1=x_2+y_2$. Thus,
we may think of $\ldf(x,y)$ as a function of a single variable
$z=x+y$, and we write $\ldf(z)$. Similarly, we write $\udf(z)$.

Our definition of $\preceq$ means that our p-box specifies bounds on
the probability of right-angled triangles (restricted to $[0,1]^2$) whose hypothenuses are orthogonal to the diagonal:
\begin{equation}
\label{eq:diag-prob-bounds}
  \ldf(z)\le p(\{(x,y)\in[0,1]^2\colon x+y\le z\})\le\udf(z)
\end{equation}
Observe that the p-box is given directly on the two-dimensional product space, without the need to define marginal p-boxes for each dimension.
The base $\tau$ for our partition topology is given by
\begin{equation*}
  \tau=\{\{(x,y)\in[0,1]^2\colon x+y=z\}\colon z\in[0,2]\}
\end{equation*}
For example, the topological interior of a rectangle
$A=[a,b]\times[c,d]$ is empty, unless $a=c=0$ or $b=d=1$, because in
all other cases, no element of $\tau$ is a subset of $A$. In the
cases where $a=c=0$ and $\min\{b,d\}<1$, or $\max\{a,c\}>0$ and
$b=d=1$ (if $a=c=0$ and $b=d=1$ then the interior is $\pspace$),
respectively, we have:
\begin{align*}
  \interior{[0,b]\times[0,d]}&=\{(x,y) \in[0,1]^2 \colon x+y\le\min\{b,d\}\} \\
  \interior{[a,1]\times[c,1]}&=\{(x,y) \in[0,1]^2 \colon x+y\ge1+\max\{a,c\}\}
\end{align*}
Consequently, $\lnex(A)=0$ for all rectangles $A$, except for
\begin{align*}
  \lnex([0,b]\times[0,d])&=\ldf(\min\{b,d\}) \\
  \lnex([a,1]\times[c,1])&=1-\udf(1+ \max\{a,c\})
\end{align*}
Fig.~\ref{fig:example-diagonal} illustrates the situation. So, for the purpose of making inferences about the lower probability
of events that are rectangles, the ordering $\preceq$ was obviously
poorly chosen. In general, \emph{one should choose $\preceq$ in a way
  that $\pspace/\simeq$ contains good approximations for all events of
  interest.}

  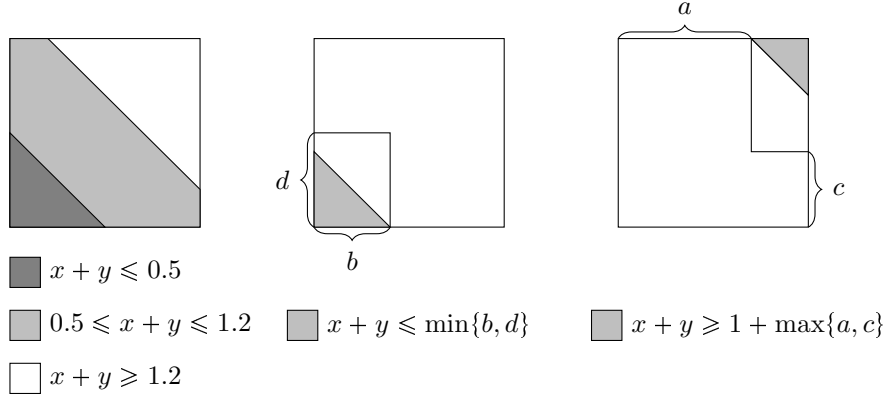
\begin{figure}
  \begin{tikzpicture}
  \begin{scope}[scale=2.5]
  \draw (0,0) rectangle (1,1);
  \draw[fill=gray!50] (0.2,1) -- (0,1) -- (0,0) -- (1,0) -- (1,0.2) -- cycle;
  \draw[fill=gray] (0,0.5) -- (0.5,0) -- (0,0) -- cycle;
   \draw[fill=gray,scale=0.8] (0,-0.4) -- (0.2,-0.4) -- (0.2,-0.2) node[midway,right] {$x+y \leq 0.5$} -- (0,-0.2) -- cycle;
  \draw[fill=gray!50,scale=0.8,yshift=-10] (0,-0.4) -- (0.2,-0.4) -- (0.2,-0.2) node[midway,right] {$0.5\leq x+y \leq 1.2$} -- (0,-0.2) -- cycle;
  \draw[scale=0.8,yshift=-20] (0,-0.4) -- (0.2,-0.4) -- (0.2,-0.2) node[midway,right] {$x+y \geq 1.2$} -- (0,-0.2) -- cycle;
  \end{scope}
   \begin{scope}[scale=2.5,xshift=1.6cm]
  \draw (0,0) rectangle (1,1);
  \draw (0,0.5) -- (0.4,0.5) -- (0.4,0);
  \draw[decoration={brace,amplitude=5pt},decorate] (0.4,0) -- (0,0) node[midway,below,yshift=-0.2cm] {$b$};
  \draw[decoration={brace,amplitude=5pt},decorate] (0,0) -- (0,0.5) node[midway,left,xshift=-0.2cm] {$d$};
  \draw[fill=gray!50] (0.4,0) -- (0,0.4) -- (0,0) -- cycle;
  \draw[fill=gray!50,scale=0.8,yshift=-10,xshift=-5] (0,-0.4) -- (0.2,-0.4) -- (0.2,-0.2) node[midway,right] {$x+y \leq \min\{b,d\}$} -- (0,-0.2) -- cycle;
  \end{scope}
     \begin{scope}[scale=2.5,xshift=3.2cm]
  \draw (0,0) rectangle (1,1);
  \draw (0.7,1) -- (0.7,0.4) -- (1,0.4);
  \draw (0.7,1) -- (1,0.7);
  \draw[decoration={brace,amplitude=5pt},decorate] (0,1) -- (0.7,1) node[midway,above,yshift=0.2cm] {$a$};
  \draw[decoration={brace,mirror,amplitude=5pt},decorate] (1,0) -- (1,0.4) node[midway,right,xshift=0.2cm] {$c$};
  \draw[fill=gray!50] (0.7,1) -- (1,0.7) -- (1,1) -- cycle;
    \draw[fill=gray!50,scale=0.8,yshift=-10,xshift=-5] (0,-0.4) -- (0.2,-0.4) -- (0.2,-0.2) node[midway,right] {$x+y \geq 1+ \max\{a,c\}$} -- (0,-0.2) -- cycle;
  \end{scope}
  \end{tikzpicture}
  \caption{Shape of intervals induced by $\preceq$, and calculation of the topological interior.}
  \label{fig:example-diagonal}
  \end{figure}

For example, in the case of rectangles, we could for instance discretize
$[0,1]^2$ into smaller squares, and impose some ordering on these
squares. Of course, it may not be entirely obvious how to interpret
the lower and upper cumulative distribution functions on such
discretized space, since there is no natural ordering on such
discretization. Another strategy would be to start from a reference point (e.g., an elicited modal value) and then to choose the ordering $\preceq$ such that intervals correspond to concentric regions of interests around the reference point.
Again, all of this is possible because our theory concerns p-boxes on arbitrary totally preordered spaces, and is not limited to the real line with its natural ordering.
More realistic examples in which such concentric regions are used are given in Section~\ref{sec:example:engineering}.

\section{Natural Extension to All Gambles}
\label{sec:nex-gambles}

Next, we establish that p-boxes are completely monotone, and that
therefore their natural extension to all gambles can be expressed as a
Choquet integral. We further simplify the calculation of this Choquet
integral via the lower and upper oscillation of gambles with respect to the
partition topology introduced earlier.

\subsection{Complete Monotonicity}

As shown in \cite[Section~3.1]{miranda2006c}, the natural extension
$\lnex_\df$ of a distribution function $\df$ on $[0,1]$ is
completely monotone. It is fairly easy to generalise this result to
distribution functions on an arbitrary totally preordered space
$\pspace$.\footnote{Indeed, by \cite[Thm.~5 \&\ Thm.~9]{cooman2008c},
  the natural extension of any finitely additive probability on a
  field is completely monotone.}
However, given this, and Eq.~\eqref{eq:lower-envelope-df}, we cannot
immediately deduce the complete monotonicity of $\lnex_{\ldf,\udf}$,
because the lower envelope of a set of completely monotone lower
previsions is not necessarily completely monotone. We prove next
that such an envelope is indeed completely monotone in the case of
p-boxes. This is an improvement with respect to previous
results \cite{desterckedubois2008}, where the relation between
p-boxes and complete monotonicity was established for
finite spaces.

Let $\lpboxlattice$ denote the restriction of $\lnepbox$ to
$\plattice$, given by Proposition~\ref{prop:nex-lattice}.

\begin{theorem}
  \label{th:pboxmon}
  $\lpboxlattice$ is a completely monotone coherent lower probability.
\end{theorem}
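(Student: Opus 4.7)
My plan is to represent $\lpboxlattice$ as the belief function of a random interval on the auxiliary probability space $((0,1],\lambda)$ with $\lambda$ Lebesgue measure, and then invoke the classical inclusion--exclusion argument showing that belief functions are completely monotone. This sidesteps the difficulty, flagged in the discussion preceding the theorem, that complete monotonicity is in general \emph{not} preserved by taking lower envelopes.

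Concretely, for each $\alpha\in(0,1]$ I would define ``lower'' and ``upper'' quantiles
\[
  \underline{q}(\alpha):=\sup\{z\in\pspace^*\colon\udf(z)<\alpha\},\qquad
  \overline{q}(\alpha):=\inf\{z\in\pspace\colon\ldf(z)\geq\alpha\},
\]
and set $\Gamma(\alpha):=(\underline{q}(\alpha),\overline{q}(\alpha)]$. Since $\ldf\leq\udf$, always $\underline{q}(\alpha)\preceq\overline{q}(\alpha)$, so $\Gamma(\alpha)$ is a bona fide interval in $\plattice$.

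The core step is the identity
\[
  \lpboxlattice(A)=\lambda(\{\alpha\in(0,1]\colon\Gamma(\alpha)\subseteq A\})\quad\text{for every }A\in\plattice.
\]
On a single half-open interval $A=(x,y]$, unwinding the definitions of $\underline{q}$ and $\overline{q}$ gives $\Gamma(\alpha)\subseteq(x,y]$ iff $\udf(x)<\alpha\leq\ldf(y)$, whose Lebesgue measure is $\max\{0,\ldf(y)-\udf(x)\}$, matching Eq.~\eqref{eq:nex-lattice}. For a general $A=(x_0,x_1]\cup\cdots\cup(x_{2n},x_{2n+1}]\in\plattice$, the crucial observation is that $\Gamma(\alpha)$ is itself an interval while the pieces of $A$ are pairwise disjoint, so $\Gamma(\alpha)$ can lie inside at most one piece. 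Consequently the events $\{\Gamma\subseteq(x_{2k},x_{2k+1}]\}_{k=0}^n$ partition $\{\Gamma\subseteq A\}$, and their Lebesgue measures sum to $\lpboxlattice(A)$ by Eq.~\eqref{eq:nex-lattice}.

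Given this representation, complete monotonicity reduces to standard inclusion--exclusion at the level of the events $\{\Gamma\subseteq\,\cdot\,\}$: for any $A,A_1,\dots,A_p\in\plattice$,
\[
  \sum_{I\subseteq\{1,\dots,p\}}(-1)^{|I|}\lpboxlattice\Bigl(A\cap\bigcap_{i\in I}A_i\Bigr)=\lambda\Bigl(\{\Gamma\subseteq A\}\cap\bigcap_{i=1}^p\{\Gamma\not\subseteq A_i\}\Bigr)\geq0,
\]
which is the $n$-monotonicity inequality for arbitrary $n$. Coherence of $\lpboxlattice$ is already granted, since by Proposition~\ref{prop:nex-lattice} it is the restriction to $\plattice$ of the coherent natural extension $\lnepbox$. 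The main technical obstacle I foresee is making the quantile construction genuinely rigorous when $\pspace/\simeq$ is not order-complete or when $\ldf,\udf$ jump at the level $\alpha$, where $\underline{q}(\alpha)$ and $\overline{q}(\alpha)$ may fail to exist as elements of $\pspace^*$; this is dealt with either by passing to a Dedekind completion of $\pspace/\simeq$, or simply by avoiding the explicit construction of $\Gamma(\alpha)$ altogether and working instead with the sets $\{\alpha\in(0,1]\colon\udf(x)<\alpha\leq\ldf(y)\}$, which are manifestly measurable and have the correct Lebesgue measure.
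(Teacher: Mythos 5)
Your route is genuinely different from the paper's: the paper proves complete monotonicity by restricting $\lpboxlattice$ to the finite subfield generated by any finite collection $A_1,\dots,A_p$ and invoking the known complete monotonicity of finite-space p-boxes from \cite{desterckedubois2008}, whereas you aim for a self-contained random-interval (belief function) representation over $((0,1],\lambda)$. The idea is attractive, but as written the core identity $\lpboxlattice(A)=\lambda(\{\alpha\colon\Gamma(\alpha)\subseteq A\})$ is false. Take $\pspace=[0,1]$ with the usual order, $\udf=0$ on $[0,1/2)$ and $\udf=1$ on $[1/2,1]$, $\ldf=0$ on $[0,3/4)$ and $\ldf=1$ on $[3/4,1]$. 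Then for every $\alpha\in(0,1]$ you get $\underline{q}(\alpha)=1/2$, $\overline{q}(\alpha)=3/4$, so $\Gamma(\alpha)=(1/2,3/4]$ and $\lambda(\{\alpha\colon\Gamma(\alpha)\subseteq(1/2,3/4]\})=1$, while Eq.~\eqref{eq:nex-lattice} gives $\lpboxlattice((1/2,3/4])=\max\{0,\ldf(3/4)-\udf(1/2)\}=0$. The half-open convention for $\Gamma(\alpha)$ discards the possibility that mass sits exactly at a jump point of $\udf$ (the focal set should be closed on the left there), so your ``unwinding the definitions'' equivalence $\Gamma(\alpha)\subseteq(x,y]\iff\udf(x)<\alpha\leq\ldf(y)$ fails in the forward direction; dually, when $\ldf=\udf$ jumps on a connected space, $\Gamma(\alpha)$ degenerates to the empty set and your partition claim for $\{\Gamma\subseteq(x_{2k},x_{2k+1}]\}_k$ breaks down. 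Worse, this cannot be patched by adjusting endpoint conventions: for $\df=0$ on $[0,1/2]$ and $\df=1$ on $(1/2,1]$ one has $\lnecdf((1/2,y])=1$ for every $y>1/2$ while $\bigcap_{y>1/2}(1/2,y]=\emptyset$, so no nonempty-valued map $\Gamma$ on $((0,1],\lambda)$ can represent $\lnecdf$ as a belief function --- the natural extension is only finitely additive, and a genuine random-set representation would force countable additivity on these shrinking intervals.

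Your proposed fallback --- abandon $\Gamma$ and work directly with $E_{(x,y]}:=\{\alpha\colon\udf(x)<\alpha\leq\ldf(y)\}$, extended to $A\in\plattice$ by taking the union over the full components --- is the correct repair, but it leaves a hole exactly where the mathematical content lies. Once there is no underlying $\Gamma$, the identities $\{\Gamma\subseteq A\}\cap\{\Gamma\subseteq B\}=\{\Gamma\subseteq A\cap B\}$ and $\{\Gamma\subseteq A\}\subseteq\{\Gamma\subseteq B\}$ for $A\subseteq B$ are no longer automatic, and it is precisely the properties $E_{A\cap B}=E_A\cap E_B$ and $E_A\subseteq E_B$ (for $A\subseteq B$) that turn ordinary inclusion--exclusion for $\lambda$ into the $n$-monotonicity inequality. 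These properties do hold and are provable --- e.g.\ if $\alpha\in E_{(x,y]}\cap E_{(x',y']}$ with $x\preceq x'$ then $y\preceq x'$ would give $\ldf(y)\leq\udf(x')<\alpha\leq\ldf(y)$, so the two intervals must overlap and $\alpha\in E_{(x',\min\{y,y'\}]}$ --- but none of this is ``standard''; it is the proof. So the approach is salvageable and would yield a self-contained alternative to the paper's citation-based argument, but as submitted it has a genuine gap at its central step.
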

\begin{proof}
  Clearly, $\lpboxlattice$ is coherent as it is the natural extension
  to $\plattice$ of a coherent lower probability $\lpbox$
  \cite[p.~123, 3.1.2(a)]{walley1991}. To prove that it is
  completely monotone, we must establish that for all $p\in\SetN$, $2\leq p\leq n$,
  and all $A_1$, \dots, $A_p$ in $\plattice$:
  \begin{equation}\label{eq:complete:monotonicity}
    \lpboxlattice\left(\bigcup_{i=1}^pA_i\right)
    \geq\sum_{\emptyset\neq I\subseteq\{1,\dots,p\}}(-1)^{|I|+1}
    \lpboxlattice\left(\bigcap_{i\in I}A_i\right).
  \end{equation}

  For any $p\in\SetN$, $2\leq p\leq n$, and any $A_1$, \dots, $A_p$ in
  $\plattice$, consider the finite field generated by $A_1$,
  \dots, $A_p$. Let $\alpr$ denote the restriction of $\lpboxlattice$
  to this finite field. By
  \cite[Sec.~3]{desterckedubois2008}, $\alpr$ is completely monotone
  on this finite field.
  In particular, Eq.~\eqref{eq:complete:monotonicity} is
  satisfied. But this means that Eq.~\eqref{eq:complete:monotonicity} is
  satisfied for all $p\in\SetN$, $2\leq p\leq n$, and all $A_1$,
  \dots, $A_p$ in $\plattice$, which establishes the theorem.
\end{proof}

\subsection{Choquet Integral Representation}

Complete monotonicity allows us to characterise the natural extension on all
gambles, as we show in the following theorem:

\begin{theorem}\label{theo:natex-choquet}
  The natural extension $\lnex$ of
  $\lpbox$ is given by the Choquet integral
  \begin{equation*}
    \lnex(f)
    =\inf f + \int_{\inf f}^{\sup f} \lnex(\{f \geq t\}) \dif t
  \end{equation*}
  for every gamble $f$. Moreover, $\lnex$ is completely monotone on all gambles. Similarly,
  \begin{equation*}
    \unex(f)
    =\inf f + \int_{\inf f}^{\sup f} \unex(\{f \geq t\}) \dif t.
  \end{equation*}
\end{theorem}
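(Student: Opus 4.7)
The plan is to reduce the statement to the representation theorem for completely monotone lower probabilities on a field, applied to $\lpboxlattice$. Specifically, by the transitivity of natural extension already recalled in Section~\ref{sec:prel}, the natural extension $\lnex$ of $\lpbox$ to all gambles coincides with the natural extension of $\lpboxlattice$ from $\plattice$ to all gambles. Since $\lpboxlattice$ is a completely monotone coherent lower probability on a field by Theorem~\ref{th:pboxmon}, I can then invoke the standard representation result (cf.\ \cite{cooman2008,cooman2008c}) which says that the natural extension of a completely monotone coherent lower probability defined on a field is given by the Choquet integral with respect to its inner measure, and moreover remains completely monotone on all gambles.

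To turn this into the displayed formula, I combine the above with Eq.~\eqref{eq:inner-measure}, which identifies the inner measure of $\lpboxlattice$ with $\lnex$ on arbitrary events. The Choquet integral of a bounded gamble $f$ with respect to a normalised monotone set function $\mu$ is
\begin{equation*}
  \inf f + \int_{\inf f}^{\sup f} \mu(\{f\geq t\})\dif t,
\end{equation*}
so substituting $\mu=\lnex$ (as a set function) yields exactly the stated expression for $\lnex(f)$. Complete monotonicity of the resulting functional on all of $\gambles$ follows directly from the second part of the representation result invoked above.

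For the upper prevision formula, I would work via conjugacy: $\unex(f)=-\lnex(-f)$. Applying the Choquet formula to $\lnex(-f)$ and substituting $s=-t$ in the integral, then using event conjugacy $\lnex(\{-f\geq -s\})=1-\unex(\{f>s\})$, gives
\begin{equation*}
  \unex(f)=\inf f + \int_{\inf f}^{\sup f} \unex(\{f>s\})\dif s,
\end{equation*}
which equals the announced formula because $\unex(\{f>s\})$ and $\unex(\{f\geq s\})$ differ only on an at most countable set of values of $s$ (the jumps of the non-increasing map $s\mapsto\unex(\{f\geq s\})$), hence agree almost everywhere with respect to Lebesgue measure on $[\inf f,\sup f]$.

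The main obstacle is really just citing the right result in the second step: one must be confident that the Choquet representation and the preservation of complete monotonicity under natural extension from a field to all gambles are genuinely available in the cited literature, since without them the argument has to be redone by hand (typically via a Möbius inverse / lower-envelope-of-linear-previsions argument applied to each finite subfield generated by the level sets $\{f\geq t\}$). Everything else is routine bookkeeping with Choquet integrals and conjugacy.
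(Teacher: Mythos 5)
Your proposal is correct and follows essentially the same route as the paper: the paper's proof likewise observes that $\lnex=\lpboxlattice_*$ on all events and then invokes Theorems~8 and~9 of \cite{cooman2008c} (Choquet-integral representation of the natural extension of a coherent $n$-monotone lower probability on a field, and preservation of $n$-monotonicity), which are exactly the two facts you identify as the crux. Your extra detail on deriving the upper-prevision formula by conjugacy and an almost-everywhere argument is sound routine bookkeeping that the paper leaves implicit under ``Similarly''.
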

\begin{proof}
  Immediate from Theorem~\ref{th:pboxmon} once observed that $\lnex=\lpboxlattice_*$ on all events, and \cite[Theorems~8 and~9]{cooman2008c}. The latter two theorems state that:
  \begin{itemize}
  \item Given a coherent $n$-monotone ($n \geq 2$) lower probability $\lpr$ defined on a field (here, $\plattice$), its natural extension to all gambles is given by its Choquet integral $(C)\int\cdot\dif\lpr_*$
  \item If a coherent lower probability $\lpr$ defined on a field is $n$-monotone ($n \geq 2$), then its natural extension to all gambles is $n$-monotone.
  \end{itemize}
\end{proof}

\subsection{Lower and Upper Oscillation}

By Lemma~\ref{lem:lnex-interior}, to turn
Theorem~\ref{theo:natex-choquet} in an effective algorithm, we must
calculate $\interior{\{f\ge t\}}$ for every $t$. Fortunately, there is
a very simple way to do this.

For any
gamble $f$ on $\pspace$ and any topological base $\tau$, define its \emph{lower oscillation}
as the gamble
\begin{align}
  \nonumber
  \losc(f)(x)&=\sup_{C \in \tau\colon x\in C} \inf_{y \in C} f(y)
  \\
  \intertext{For the partition topology which we introduced earlier, this simplifies to}
  \label{eq:losc}
 \losc(f)(x) &=\inf_{y \in [x]_\simeq} f(y)
\end{align}
The upper oscillation is:
\begin{equation}
  \label{eq:uosc}
  \uosc(f)(x)=-\losc(-f)(x)=\sup_{y \in [x]_\simeq} f(y)
\end{equation}
For a subset $A$ of $\pspace$, we deduce from the above definition
and from Eq.~\eqref{eq:interior} that the lower oscillation of $I_A$
is $I_{\interior{A}}$, so the lower oscillation is the natural
generalisation of the topological interior to gambles. Similarly, we
see from Eq.~\eqref{eq:closure} that the upper oscillation of $I_A$
is $I_{\closure{A}}$.

\begin{proposition}\label{prop:lnex-losc}
  For any gamble $f$ on $\pspace$,
  \begin{subequations}\label{eq:prop:lnex-losc:int:cl}
  \begin{align}
    \interior{\{f\ge t\}}&=\{\losc(f)\ge t\}
    \\
    \closure{\{f\ge t\}}&=\{\uosc(f)\ge t\}
  \end{align}
  \end{subequations}
  so, in particular,
  \begin{subequations}\label{eq:prop:lnex-losc:choquet}
  \begin{align}
    \lnex(f)
    &=
    \inf\losc(f)+ \int_{\inf\losc(f)}^{\sup\losc(f)}
    \lnex(\{\losc(f) \geq t\}) \dif t
    =\lnex(\losc(f))
    \\
    \unex(f)
    &=
    \inf\uosc(f) + \int_{\inf\uosc(f)}^{\sup\uosc(f)}
    \unex(\{\uosc(f) \geq t\}) \dif t
    =\unex(\uosc(f))
  \end{align}
  \end{subequations}
\end{proposition}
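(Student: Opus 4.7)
The plan is to prove the two topological identities in Eq.~\eqref{eq:prop:lnex-losc:int:cl} first, and then feed them into Theorem~\ref{theo:natex-choquet} and Lemma~\ref{lem:lnex-interior} to obtain the Choquet expressions~\eqref{eq:prop:lnex-losc:choquet}.

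For the first identity, I would unfold the definitions. By Eq.~\eqref{eq:interior}, a point $x$ lies in $\interior{\{f\ge t\}}$ iff the entire equivalence class $[x]_\simeq$ is contained in $\{f\ge t\}$, which is equivalent to $f(y)\ge t$ for every $y\simeq x$, i.e.\ $\inf_{y\in[x]_\simeq}f(y)=\losc(f)(x)\ge t$, by Eq.~\eqref{eq:losc}. The closure version is symmetric: using Eq.~\eqref{eq:closure}, $x\in\closure{\{f\ge t\}}$ iff $[x]_\simeq\cap\{f\ge t\}\neq\emptyset$ iff $\sup_{y\in[x]_\simeq}f(y)=\uosc(f)(x)\ge t$. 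Alternatively, one could derive the closure identity from the interior identity via $\closure{A}=(\interior{A^c})^c$ together with $\uosc(f)=-\losc(-f)$.

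For the Choquet expressions, I would start from Theorem~\ref{theo:natex-choquet} applied to $f$, then use Lemma~\ref{lem:lnex-interior} to replace $\lnex(\{f\ge t\})$ by $\lnex(\interior{\{f\ge t\}})$, and then use the identity just proven to rewrite this as $\lnex(\{\losc(f)\ge t\})$. This immediately gives
\begin{equation*}
  \lnex(f)=\inf f+\int_{\inf f}^{\sup f}\lnex(\{\losc(f)\ge t\})\dif t .
\end{equation*}
To show this also equals $\lnex(\losc(f))$, I would apply Theorem~\ref{theo:natex-choquet} to the gamble $\losc(f)$ itself and reconcile the two resulting Choquet integrals by adjusting the limits of integration.

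The reconciliation—probably the one place requiring a bit of care—is done using the pointwise inequality $\losc(f)\le f$ (since $x\in[x]_\simeq$), which yields $\inf\losc(f)\le\inf f$ and $\sup\losc(f)\le\sup f$. On the interval $[\inf\losc(f),\inf f]$ the set $\{f\ge t\}$ equals $\pspace$, so its interior is $\pspace$, hence $\lnex(\{\losc(f)\ge t\})=1$ there; and on $(\sup\losc(f),\sup f]$ the set $\{\losc(f)\ge t\}$ is empty, hence $\lnex(\{\losc(f)\ge t\})=0$. Splitting $\int_{\inf\losc(f)}^{\sup\losc(f)}$ and adding the constant $\inf\losc(f)$ then reproduces $\inf f+\int_{\inf f}^{\sup f}\lnex(\{\losc(f)\ge t\})\dif t$, so $\lnex(\losc(f))=\lnex(f)$. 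The argument for $\unex$ is entirely analogous, using the closure identity, the pointwise inequality $\uosc(f)\ge f$, and the Choquet formula for $\unex$ from Theorem~\ref{theo:natex-choquet}.
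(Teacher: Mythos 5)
Your proposal is correct and follows essentially the same route as the paper's proof: unfold the definitions of interior/closure and of the oscillations to get Eqs.~\eqref{eq:prop:lnex-losc:int:cl}, then combine Theorem~\ref{theo:natex-choquet} with Lemma~\ref{lem:lnex-interior} and adjust the limits of integration using that the integrand is $1$ below $\inf f$ and $0$ above $\sup\losc(f)$ (the paper instead notes $\inf f=\inf\losc(f)$ outright, which is a trivially equivalent piece of bookkeeping). No gaps.
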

\begin{proof}
  Eqs.~\eqref{eq:prop:lnex-losc:int:cl} are easily established using
  the definitions of interior and closure, and lower and upper
  oscillation. For example,
  \begin{equation*}
    x\in\interior{\{f\ge t\}}
  \end{equation*}
  if and only if there is a $C$ in $\tau$
  such that $x\in C$ and
  \begin{equation*}
    C\subseteq \{f\ge t\}
  \end{equation*}
  But, for our choice of $\tau$, necessarily $C=[x]_\simeq$ if $x\in C\in \tau$, so the above holds if and only if
  \begin{equation*}
    \forall y\in [x]_\simeq\colon f(y)\ge t
  \end{equation*}
  And this holds if and only if
  \begin{equation*}
    \losc(f)(x)\ge t
  \end{equation*}
  where we used the defintion of $\losc(f)$.

  Let us prove Eqs.~\eqref{eq:prop:lnex-losc:choquet}. It follows from
  Eq.~\eqref{eq:losc} that $f\geq\losc(f)$, and as a consequence
  $\lnex(f)\geq \lnex(\losc(f))$. We are left to prove that
  $\lnex(f)\leq \lnex(\losc(f))$.

  Indeed, using Lemma~\ref{lem:lnex-interior}, and
  Eqs.~\eqref{eq:prop:lnex-losc:int:cl},
  \begin{align*}
    \lnex(f)
    &=\inf f + \int_{\inf f}^{\sup f} \lnex(\{f \geq t\})\dif t
    =\inf f + \int_{\inf f}^{\sup f} \lnex(\{\losc(f) \geq t\}) \dif t
    \\
    \intertext{and since obviously, by Eq.~\eqref{eq:losc}, $\inf f=\inf\losc(f)$,}
    &=\inf\losc(f) + \int_{\inf\losc(f)}^{\sup f} \lnex(\{\losc(f) \geq t\}) \dif t
    \\
    \intertext{and since $\sup f\ge \sup\losc(f)$, using the usual properties of the Riemann integral,}
    &=\inf\losc(f) + \int_{\inf\losc(f)}^{\sup\losc(f)} \lnex(\{\losc(f) \geq t\}) \dif t
    \\
    &\quad
    + \int_{\sup\losc(f)}^{\sup f} \lnex(\{\losc(f) \geq t\}) \dif t
  \end{align*}
  Now use the fact that $\{\losc(f) \geq t\}=\emptyset$ for
  $t\in(\sup \ \losc(f),\sup f]$, so the last term is zero.
\end{proof}

Concluding, to calculate the natural extension of any gamble, in
practice, we must simply determine the full components of the cut sets
of its lower or upper oscillation, and calculate a simple Riemann
integral of a monotonic function.

Examples will be given in Section~\ref{sec:example:engineering}.

\section{P-Boxes Whose Preorders are Induced by a Real-Valued Function}
\label{sec:induced-preorder}

In practice, the most convenient way to specify a preorder $\preceq$
on $\pspace$ such that $\pspace/\simeq$ is order complete and
connected is by means of a bounded real-valued function
$Z\colon\pspace\to\reals$. For instance, in the example in
Section~\ref{sec:example-diagonal}, we used $Z(x,y)=x+y$. Also see
\cite{2006:benhaim}
and \cite{2008:fuchs}.

Let us assume from now onwards that $Z$ is a surjective mapping from
$\pspace$ to $[0,1]$.

For any $x$ and $y$ in $\pspace$, define $x\preceq y$ whenever
$Z(x)\le Z(y)$. Because $Z$ is surjective, $\pspace/\simeq$ is order
complete and connected. In particular, $\pspace$ has a smallest and largest element,
for which $Z(0_\pspace)=0$ and $Z(1_\pspace)=1$. Moreover, we can
think of any cumulative distribution function on $(\pspace,\preceq)$
as a function over a single variable $z\in[0,1]$. Consequently, we can
think of any p-box on $(\pspace,\preceq)$ as a p-box on
$([0,1],\le)$. In particular, for any subset $I$ of $[0,1]$ we
write $\lnex(I)$ for $\lnex(Z^{-1}(I))$. For example, for $a$, $b$ in
$[0,1]$, and $A=Z^{-1}((a,b])\subseteq\pspace$, we have that
\begin{equation*}
  \lnex(A)=\lnex((a,b])=\max\{0,\ldf(a)-\udf(b)\}
\end{equation*}
by Proposition~\ref{prop:nex-lattice}. Similar expressions for other types
of intervals follow from Eqs.~\eqref{eq:nex-intervals}.

The topological interior and closure can be related to the so-called
\emph{lower and upper inverse} of $Z^{-1}$. Indeed, consider
the multi-valued mapping $\Gamma:=Z^{-1}\colon[0,1]\rightarrow
\wp(\pspace)$. Because for every $x$ in $\pspace$, it holds that
$[x]_\simeq=\Gamma(Z(x))$, it follows that,
for any subset $A$ of $\pspace$,
$\interior{A}=\Gamma(\Gamma_*(A))$, and
$\closure{A}=\Gamma(\Gamma^*(A))$, where $\Gamma_*$ and $\Gamma^*$
denote the lower and upper inverse of $\Gamma$ respectively, that
is
\begin{align*}
  \Gamma_*(A)&=\{z\in[0,1]\colon \Gamma(z)\subseteq A\}, \text{ and}
  \\
  \Gamma^*(A)&=\{z\in[0,1]\colon \Gamma(z)\cap A\neq\emptyset\}
\end{align*}
(see for instance \cite{dempster1967}).\footnote{We follow the
terminology in~\cite{nguyen1978b} and~\cite{DuboisPrade1987}, among
others. Beware that $\Gamma_*$ and $\Gamma^*$ are sometimes called
\emph{upper} and \emph{lower} inverse instead
\cite{1997:tsiporkova:upperinverse,1959:berge}, or \emph{strong}
and \emph{weak} inverse \cite{1965:debreu}.}

\begin{theorem}\label{thm:nex-z}
  Let $A$ be an arbitrary subset of $\pspace$. Then
  \begin{align*}
    \lnex(A)&=\sum_{\lambda\in\Lambda}\lnex(I_\lambda)
    \\
    \unex(A)&=1-\sum_{\lambda\in\Lambda'}\lnex(J_\lambda)
  \end{align*}
  where $(I_\lambda)_{\lambda\in\Lambda}$ are the full components of
  $Z(\interior{A})=\Gamma_*(A)$
  and $(J_\lambda)_{\lambda\in\Lambda'}$ are the full components of
  $Z(\interior{A^c})=Z(\closure{A}^c)=\Gamma_*(A^c)$.

  If, in addition, $\ldf$ is left-continuous as a function of
  $z\in[0,1]$ and $\ldf(0)=0$, then
  \begin{align*}
    \lnex(A)&=\sum_{\lambda\in\Lambda}\max\{0,\ldf(\sup I_\lambda)-\udf(\inf I_\lambda)\}
    \\
    \unex(A)&=1-\sum_{\lambda\in\Lambda'}\max\{0,\ldf(\sup J_\lambda)-\udf(\inf J_\lambda)\}
  \end{align*}
\end{theorem}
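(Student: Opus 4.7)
The plan is to reduce the statement to the already-established tools by pushing everything through the map $Z$. First, by Lemma~\ref{lem:lnex-interior}, $\lnex(A)=\lnex(\interior{A})$, and since $\interior{A}$ is clopen in the partition topology, Theorem~\ref{thm:component-additivity} yields
\begin{equation*}
\lnex(\interior{A})=\sum_{\lambda\in\Lambda}\lnex(B_\lambda),
\end{equation*}
where $(B_\lambda)_{\lambda\in\Lambda}$ are the full components of $\interior{A}$ in $\pspace$. The $\unex$ expression is then obtained by the conjugacy relation $\unex(A)=1-\lnex(A^c)$ applied to $A^c$, noting that $\interior{A^c}=\closure{A}^c$.

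The key step is to identify these full components $B_\lambda$ with the full components $I_\lambda$ of $\Gamma_*(A)=Z(\interior{A})$ in $[0,1]$. Since the preorder is induced by $Z$, every equivalence class of $\simeq$ has the form $[x]_\simeq=\Gamma(Z(x))$, so any subset of $\pspace$ that is a union of equivalence classes (in particular, any full set, and any clopen set) equals $\Gamma(Z(\cdot))$ of itself. Hence $B_\lambda=\Gamma(Z(B_\lambda))$, and one verifies directly from the definition of full set that $Z$ provides an order-preserving bijection between full subsets of $\interior{A}$ in $\pspace$ and full subsets of $\Gamma_*(A)$ in $[0,1]$, hence between their respective full components. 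By the convention recalled just before the theorem, $\lnex(I_\lambda)$ denotes $\lnex(Z^{-1}(I_\lambda))=\lnex(\Gamma(I_\lambda))=\lnex(B_\lambda)$, which yields the first claimed formula. The same argument applied to $A^c$ delivers the expression for $\unex(A)$.

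For the second part, note that because $Z$ is surjective onto $[0,1]$, the quotient $\pspace/\simeq$ is order complete and connected. By Lemma~\ref{lem:order-complete-intervals} each full component $I_\lambda$ of $\Gamma_*(A)\subseteq[0,1]$ is an interval, and connectedness ensures that no point of $[0,1]$ other than $0$ has an immediate predecessor. Under the additional hypotheses $\ldf(0)=0$ and $\ldf$ left-continuous, we have $\ldf(y-)=\ldf(y)$ for every $y\in[0,1]$, so all four cases in Eqs.~\eqref{eq:nex-intervals} collapse to
\begin{equation*}
\lnex(I_\lambda)=\max\{0,\ldf(\sup I_\lambda)-\udf(\inf I_\lambda)\}
\end{equation*}
(this is essentially the content of Corollary~\ref{cor:nex-connected} applied to the p-box viewed on $([0,1],\le)$, taking $I_\lambda$ itself for $A$, which is already clopen with a single full component). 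Plugging this into the first part of the theorem, and doing the same for $(J_\lambda)_{\lambda\in\Lambda'}$, gives the second set of formulas.

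The main obstacle is purely bookkeeping: verifying that the correspondence $B\mapsto Z(B)$ sends full components to full components bijectively, and that $\lnex(I_\lambda)$ in the $[0,1]$-formulation really agrees with $\lnex(Z^{-1}(I_\lambda))$ in the $\pspace$-formulation. Once these identifications are made, no new calculation is needed beyond invoking Lemma~\ref{lem:lnex-interior}, Theorem~\ref{thm:component-additivity}, and Eqs.~\eqref{eq:nex-intervals}.
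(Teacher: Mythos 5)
Your proposal is correct and follows essentially the same route as the paper: reduce to the interior via Lemma~\ref{lem:lnex-interior}, apply additivity on full components (Theorem~\ref{thm:component-additivity}, i.e.\ Corollary~\ref{cor:nex-connected}), identify the full components of $\interior{A}$ with the sets $Z^{-1}(I_\lambda)$, and then specialise Eqs.~\eqref{eq:nex-intervals} under left-continuity and $\ldf(0)=0$. The one step you label as ``bookkeeping''---that $Z^{-1}(I_\lambda)$ are full and are \emph{maximal} full subsets of $\interior{A}$---is precisely where the paper spends its effort (via a contradiction argument on a strictly larger full set $S$), but your sketch of the order-preserving correspondence between full subsets is the right argument and would fill in without difficulty.
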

\begin{proof}
  Indeed, by Corollary~\ref{cor:nex-connected},
  \begin{equation*}
    \lnex(A)
    =
    \lnex(\interior{A})
    =
    \sum_{\lambda\in\Lambda}\lnex(B_\lambda)
  \end{equation*}
  where $(B_\lambda)_{\lambda\in\Lambda}$ are the full components of
  $\interior{A}$. So, the result is established if we can show that
  $(Z^{-1}(I_\lambda))_{\lambda\in\Lambda}$ are the full components of
  $\interior{A}$.

  Obviously, since $(I_\lambda)_{\lambda\in\Lambda}$ partitions
  $Z(\interior{A})$, it follows that
  $(Z^{-1}(I_\lambda))_{\lambda\in\Lambda}$ partitions
  \begin{equation*}
    \bigcup_{\lambda\in\Lambda}Z^{-1}(I_\lambda)=Z^{-1}(Z(\interior{A}))=\interior{A}
  \end{equation*}
  where the latter equality follows from the fact that $\interior{A}$
  is clopen, i.e., is a union of equivalence classes.

  We are left to prove that each set $Z^{-1}(I_\lambda)$ is a full
  component. Clearly, $Z^{-1}(I_\lambda)$ is full: for any two $x$ and
  $y$ in $Z^{-1}(I_\lambda)$, it holds that
  \begin{equation*}
    [x,y]
    =\{v\in\pspace\colon Z(x)\le Z(v)\le Z(y)\}
    =Z^{-1}([Z(x),Z(y)])
    \subseteq Z^{-1}(I_\lambda)
  \end{equation*}
  where we used the fact that $[Z(x),Z(y)]\subseteq I_\lambda$ in the
  last step.

  Consider any $x\in Z^{-1}(I_\lambda)$. The desired result is
  established if we can show that $Z^{-1}(I_\lambda)$ is the largest
  full set $S$ which satisfies $x\in S\subseteq \interior{A}$.

  Suppose $S$ is larger, that is, $S$ is full, $x\in S\subseteq
  \interior{A}$, and $Z^{-1}(I_\lambda)\subsetneq S$. Since both sets
  are clopen, it must be that there is some $y\in S$ such that
  $[y]_\simeq\cap Z^{-1}(I_\lambda)=\emptyset$. But this implies
  that
  \begin{equation*}
    I_\lambda=Z(Z^{-1}(I_\lambda))\subsetneq Z(S)
  \end{equation*}
  because $Z(y)$ belongs to $Z(S)$ but not to
  $Z(Z^{-1}(I_\lambda))$. But, this would mean that $I_\lambda$ is not
  a full component of $Z(\interior{A})$---a contradiction. So,
  $Z^{-1}(I_\lambda)$ must be the largest full set $S$ which satisfies
  $x\in S\subseteq \interior{A}$.
\end{proof}

Regarding gambles, note that the lower oscillation is constant on
equivalence classes (this follows immediately from its definition).
Hence, we may consider $\losc(f)$ for a gamble $f$ on $\pspace$ as a
function of $z\in[0,1]$, and we can use
Proposition~\ref{prop:lnex-losc} to write:
\begin{proposition}\label{prop:lnex-z-losc}
For any gamble $f$ on $\pspace$,
\begin{align*}
  \lnex(f)
  &=\inf \losc(f) + \int_{\inf \losc(f)}^{\sup \losc(f)} \lnex(\{z\colon\losc(f)(z) \geq t\}) \dif t
  \\
  \unex(f)
  &=\inf \uosc(f) + \int_{\inf \uosc(f)}^{\sup \uosc(f)} \unex(\{z\colon\uosc(f)(z) \geq t\}) \dif t
\end{align*}
\end{proposition}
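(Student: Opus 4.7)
My plan is to treat Proposition~\ref{prop:lnex-z-losc} as essentially a translation of Proposition~\ref{prop:lnex-losc} through the identification of $\pspace/\simeq$ with $[0,1]$ induced by $Z$. The preparation for this has already been done in the paragraph preceding the statement, where it is observed that $\losc(f)$ is constant on each equivalence class $[x]_\simeq$---this is immediate from Eq.~\eqref{eq:losc}, since membership in $[x]_\simeq$ is exactly the set over which the infimum is taken. The same holds for $\uosc(f)$ by Eq.~\eqref{eq:uosc}.

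First I would use the fact that $x \simeq y$ if and only if $Z(x)=Z(y)$ (because $\preceq$ is induced by $Z$), together with the surjectivity of $Z\colon\pspace\to[0,1]$, to factor $\losc(f)$ uniquely as $g\circ Z$ for some $g\colon[0,1]\to\reals$; by the convention introduced in Section~\ref{sec:induced-preorder} we write $g(z)=\losc(f)(z)$. Next I would observe that any cut set $\{x\in\pspace\colon\losc(f)(x)\geq t\}$ is a union of equivalence classes and therefore equals $Z^{-1}(\{z\in[0,1]\colon\losc(f)(z)\geq t\})$. Applying the convention $\lnex(I):=\lnex(Z^{-1}(I))$ for $I\subseteq[0,1]$ then gives
\begin{equation*}
  \lnex(\{\losc(f)\geq t\})
  =\lnex(\{z\colon\losc(f)(z)\geq t\}).
\end{equation*}
Since the infima and suprema of $\losc(f)$ are manifestly the same whether the function is viewed on $\pspace$ or on $[0,1]$, plugging these equalities into the Choquet formula $\lnex(f)=\lnex(\losc(f))=\inf\losc(f)+\int_{\inf\losc(f)}^{\sup\losc(f)}\lnex(\{\losc(f)\geq t\})\dif t$ supplied by Proposition~\ref{prop:lnex-losc} immediately yields the first stated identity.

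The expression for $\unex(f)$ is obtained analogously, by replacing $\losc$ with $\uosc$ everywhere, using the $\unex(f)=\unex(\uosc(f))$ half of Proposition~\ref{prop:lnex-losc} and the fact that $\uosc(f)$ is likewise constant on equivalence classes and so factors through $Z$. There is no real obstacle here: the entire argument is a notational reinterpretation, and the only thing to be careful about is checking that the convention $\lnex(I)=\lnex(Z^{-1}(I))$ is being applied to sets $I\subseteq[0,1]$ that are exactly the images under $Z$ of the relevant cut sets on $\pspace$, which follows because those cut sets are unions of equivalence classes.
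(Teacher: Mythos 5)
Your proposal is correct and follows exactly the route the paper takes: the paper gives no separate proof, deriving the proposition directly from the remark that $\losc(f)$ (and $\uosc(f)$) is constant on equivalence classes and hence factors through $Z$, so that Proposition~\ref{prop:lnex-losc} applies verbatim under the convention $\lnex(I)=\lnex(Z^{-1}(I))$. Your write-up merely makes explicit the identification of cut sets with $Z^{-1}$-preimages, which is the intended (and correct) justification.
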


\section{Constructing Multivariate P-Boxes from Marginals}
\label{sec:marginals}

In this section, we construct a
multivariate p-box from marginal coherent lower previsions under arbitrary rules of combination. As special cases, we derive expressions for the joint,
\begin{enumerate}[(i)]
\item either without any assumptions about
dependence or independence between variables, that is, using the
Fr\'echet-Hoeffding bounds \cite{hoeffding1963},
\item or assuming epistemic independence between all variables, that is,
  using the factorization property \cite{2010:decooman:ine}.
\end{enumerate}
We also derive Williamson and Downs's~\cite{WilliamsonDowns1990} probabilistic arithmetic as a special case of our framework.

Specifically, consider $n$ variables $X_1$, \dots, $X_n$ assuming values in $\mathcal{X}_1$, \dots, $\mathcal{X}_n$, and assume that marginal
lower previsions $\lpr_1$, \dots, $\lpr_n$, are given for each
variable---each of these could be the natural extension of a p-box, although we do not require this. So, each $\lpr_i$ is a coherent lower prevision on $\gambles[\mathcal{X}_i]$.

\subsection{Multivariate P-Boxes}

The first step in constructing our multivariate p-box is to define a mapping $Z$ to
induce a preorder $\preceq$ on $\pspace=\mathcal{X}_1\times\dots\times \mathcal{X}_n$. The following choice
works perfectly for our purpose:
\begin{equation*}
  Z(x_1,\dots,x_n)=\max_{i=1}^n Z_i(x_i)
\end{equation*}
where each $Z_i$ is a surjective mapping from $\mathcal{X}_i$ to $[0,1]$ and
hence, also induces an marginal preorder $\preceq_i$ on $\mathcal{X}_i$. Each
$\lpr_i$ can be approximated by a p-box $(\ldf_i,\udf_i)$ on
$(\mathcal{X}_i,\preceq_i)$, defined by
\begin{align*}
  \ldf_i(z)&=\lpr_i(Z_i^{-1}([0,z])) & \udf_i(z)&=\upr_i(Z_i^{-1}([0,z]))
\end{align*}
This approximation is the best possible one, by
Theorem~\ref{thm:pbox-approximation}.

Beware that even though different choices of $Z_i$ may induce the same
total preorder $\preceq_i$, they might lead to a different total
preorder $\preceq$ induced by $Z$. Whence, our joint total preorder
$\preceq$ is not uniquely determined by $\preceq_i$. Roughly speaking,
the $Z_i$ specify how the marginal preorders $\preceq_i$ scale relative to one another.  As
we shall see, this effectively means that our choice of $Z_i$ affects
the precision of our inferences: a good choice will ensure that any
event of interest can be well approximated by elements of
$\pspace/\simeq$. Of course, nothing prevents us, at least in theory,
to consider the set of all $Z_i$ which induce some given marginal
total preorders $\preceq_i$, and whence to work with a set of
p-boxes. In some cases, this may result in quite complicated
calculations. However, in Section~\ref{sec:prob:arithm}, we will see
an example where this approach is feasible.

Anyway, with this choice of $Z$, we can easily
find the p-box which represents the joint as accurately as possible, under any rule of combination of coherent lower previsions:

\begin{lemma}\label{lem:joint-p-box}
  Consider any rule of combination $\odot$ of coherent lower and upper previsions, mapping the marginals $\lpr_1$, \dots, $\lpr_n$ to a joint coherent lower prevision $\bigodot_{i=1}^n\lpr_i$ on all gambles. Suppose there are functions $\ell$ and $u$ for which:
  \begin{align*}
  \bigodot_{i=1}^n\lpr_i\left(\prod_{i=1}^n A_i\right)
  &=
  \ell(\lpr_1(A_1),\dots,\lpr_n(A_n))\text{ and }
  \\
  \bigodot_{i=1}^n\upr_i\left(\prod_{i=1}^n A_i\right)
  &=
  u(\upr_1(A_1),\dots,\upr_n(A_n)),
  \end{align*}
  for all $A_1\subseteq \mathcal{X}_1$, \dots, $A_n\subseteq \mathcal{X}_n$. Then, the couple $(\ldf,\udf)$ defined by
  \begin{align*}
    \ldf(z)&=\ell(\ldf_1(z),\dots,\ldf_n(z)) &
    \udf(z)&=u(\udf_1(z),\dots,\udf_n(z))
  \end{align*}
  is the least conservative p-box on $(\pspace,\preceq)$ whose natural
  extension $\lnepbox$ is dominated by the combination
  $\bigodot_{i=1}^n\lpr_i$ of $\lpr_1$, \dots, $\lpr_n$.
\end{lemma}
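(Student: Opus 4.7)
The plan is to reduce this to the approximation result in Theorem~\ref{thm:pbox-approximation}. That theorem tells us that, given any coherent lower prevision $\lpr$ on $\gambles$, the least conservative p-box on $(\pspace,\preceq)$ whose natural extension is dominated by $\lpr$ has distribution functions $x\mapsto \lpr([0_\pspace,x])$ and $x\mapsto \upr([0_\pspace,x])$. So, applying this with $\lpr=\bigodot_{i=1}^n\lpr_i$, the proof reduces to evaluating the joint lower and upper previsions on the events $[0_\pspace,x]$.

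The crucial observation is that, because $\preceq$ is induced by $Z(x_1,\dots,x_n)=\max_i Z_i(x_i)$, the cumulative events factorize as products of marginal cumulative events. More precisely, for any $x\in\pspace$ with $Z(x)=z$, one has
\begin{equation*}
  [0_\pspace,x]
  =\{y\in\pspace\colon Z(y)\le z\}
  =\bigl\{y\colon \max_i Z_i(y_i)\le z\bigr\}
  =\prod_{i=1}^n Z_i^{-1}([0,z]).
\end{equation*}
This is exactly where the choice $Z=\max_i Z_i$ pays off: the joint cumulative event is a rectangle in the product space, with sides given by the marginal cumulative events whose probabilities are $\ldf_i(z)$ and $\udf_i(z)$.

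Now I would apply the hypothesis on $\odot$ with $A_i=Z_i^{-1}([0,z])$, giving
\begin{align*}
  \bigodot_{i=1}^n\lpr_i([0_\pspace,x])
  &=\ell\bigl(\lpr_1(A_1),\dots,\lpr_n(A_n)\bigr)
  =\ell\bigl(\ldf_1(z),\dots,\ldf_n(z)\bigr)=\ldf(z),\\
  \bigodot_{i=1}^n\upr_i([0_\pspace,x])
  &=u\bigl(\upr_1(A_1),\dots,\upr_n(A_n)\bigr)
  =u\bigl(\udf_1(z),\dots,\udf_n(z)\bigr)=\udf(z).
\end{align*}
By Theorem~\ref{thm:pbox-approximation}, this proves that $(\ldf,\udf)$ is the least conservative p-box on $(\pspace,\preceq)$ whose natural extension is dominated by $\bigodot_{i=1}^n \lpr_i$.

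The proof is essentially a two-line composition: the factorization of cumulative events under the max-preorder, followed by Theorem~\ref{thm:pbox-approximation}. The only subtle point worth verifying explicitly is that the marginal cumulative probabilities $\lpr_i(A_i)$ and $\upr_i(A_i)$ are indeed $\ldf_i(z)$ and $\udf_i(z)$, which follows directly from the definition of the marginal p-box approximations $(\ldf_i,\udf_i)$ given just before the lemma. No independent step is genuinely hard; the work is really done by the careful choice of $Z$, which guarantees that product events coincide with $\preceq$-cumulative events.
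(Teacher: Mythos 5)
Your proof is correct and follows essentially the same route as the paper's: both reduce the claim to Theorem~\ref{thm:pbox-approximation} and then observe that the cumulative event $[0_\pspace,x]=Z^{-1}([0,z])$ factorizes as the product $\prod_{i=1}^n Z_i^{-1}([0,z])$ under the max-preorder, after which the hypothesis on $\odot$ gives the stated formulas. No discrepancies worth noting.
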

\begin{proof}
  By Theorem~\ref{thm:pbox-approximation}, the least conservative
  p-box on $(\pspace,\preceq)$ whose natural extension is dominated by
  the joint $\lpr=\bigodot_{i=1}^n\lpr_i$ is given by
  \begin{align*}
    \ldf(z)&=\lpr(Z^{-1}([0,z]))
    &
    \udf(z)&=\upr(Z^{-1}([0,z]))
  \end{align*}
  Now, observe that the set $Z^{-1}([0,z])$ is a product of marginal intervals:
  \begin{align*}
    Z^{-1}([0,z])
    &=
    \{
    (x_1,\dots,x_n)\in\pspace\colon \max_{i=1}^n Z_i(x_i)\le z
    \}
    \\
    &=
    \{
    (x_1,\dots,x_n)\in\pspace\colon (\forall i=1,\dots,n)(Z_i(x_i)\le z)
    \}
    \\
    &=
    \prod_{i=1}^n\{x_i\in X_i\colon Z_i(x_i)\le z\}
    =
    \prod_{i=1}^nZ_i^{-1}([0,z]).
  \end{align*}
  The desired equalities follow immediately.
\end{proof}

\subsection{Natural Extension: The Fr\'echet Case}

The \emph{natural extension} $\boxtimes_{i=1}^n\lpr_i$ of $\lpr_1$,
\dots, $\lpr_n$ is the lower envelope of all joint distributions (or,
linear previsions) whose marginal distributions (or, marginal linear
previsions) are compatible with the given marginal lower
previsions. So, the model is completely vacuous about the dependence
structure, as it includes all possible forms of dependence. We refer
to for instance \cite[p.~120, \S 3.1]{cooman2003c} for a rigorous
definition. In this paper, we only need the following equalities,
which are known as the Fr\'echet bounds~\cite{1935:frechet} (see for instance~\cite[p.~131]{1989:williamson}
for a more recent discussion):
\begin{subequations}
  \label{eq:joint-natural-extension}
\begin{align}
  \bigboxtimes_{i=1}^n\lpr_i\left(\prod_{i=1}^n A_i\right)
  &=\max\left\{0,1-n+\sum_{i=1}^n\lpr_i(A_i)\right\}\text{ and }
  \\
  \bigboxtimes_{i=1}^n\upr_i\left(\prod_{i=1}^n A_i\right)
  &=\min_{i=1}^n\upr_i(A_i)
\end{align}
\end{subequations}
for all $A_1\subseteq \mathcal{X}_1$, \dots, $A_n\subseteq \mathcal{X}_n$.

\begin{theorem}\label{th:joint-alldep}
  The p-box $(\ldf,\udf)$ defined by
  \begin{align*}
    \ldf(z)&=\max\left\{0,1-n+\sum_{i=1}^n\ldf_i(z)\right\} &
    \udf(z)&=\min_{i=1}^n\udf_i(z)
  \end{align*}
  is the least conservative p-box on $(\pspace,\preceq)$ whose natural
  extension $\lnepbox$ is dominated by the natural extension
  $\boxtimes_{i=1}^n\lpr_i$ of $\lpr_1$, \dots, $\lpr_n$.
\end{theorem}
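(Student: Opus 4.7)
The plan is to deduce this theorem directly from Lemma~\ref{lem:joint-p-box} by identifying the right functions $\ell$ and $u$ in the Fréchet case. Indeed, Eq.~\eqref{eq:joint-natural-extension} tells us precisely that, when the combination rule $\odot$ is the natural extension $\boxtimes$, the joint of a product event depends on the marginals only through the values $\lpr_i(A_i)$ and $\upr_i(A_i)$. So I would set
\begin{equation*}
  \ell(p_1,\dots,p_n)=\max\left\{0,1-n+\sum_{i=1}^n p_i\right\}
  \quad\text{and}\quad
  u(q_1,\dots,q_n)=\min_{i=1}^n q_i,
\end{equation*}
which, by Eq.~\eqref{eq:joint-natural-extension}, satisfy the hypothesis of Lemma~\ref{lem:joint-p-box} for $\odot=\boxtimes$.

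Next I would plug these $\ell$ and $u$ into the conclusion of Lemma~\ref{lem:joint-p-box} together with the marginal p-boxes $(\ldf_i,\udf_i)$ defined just before the lemma (which are in turn the least conservative p-boxes dominating each $\lpr_i$, by Theorem~\ref{thm:pbox-approximation}). Evaluating at $Z_i^{-1}([0,z])$ yields $\ell(\ldf_1(z),\dots,\ldf_n(z))=\max\{0,1-n+\sum_i\ldf_i(z)\}$ and $u(\udf_1(z),\dots,\udf_n(z))=\min_i\udf_i(z)$, which match the stated expressions for $\ldf$ and $\udf$.

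Before concluding, I would briefly verify that the resulting pair really is a p-box, i.e., that $\ldf,\udf$ are non-decreasing, lie in $[0,1]$, satisfy $\ldf(1)=\udf(1)=1$, and $\ldf\le\udf$. Monotonicity and the boundary value at $z=1$ are immediate from the corresponding properties of the $\ldf_i$ and $\udf_i$. The inequality $\ldf(z)\le\udf(z)$ follows because for each $j$, $\ldf_j(z)\le\udf_j(z)$ and $\ldf_i(z)\le 1$ for $i\neq j$, hence $1-n+\sum_i\ldf_i(z)\le\udf_j(z)$, and taking the minimum over $j$ (and the maximum with $0$) gives the claim.

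There is no real obstacle here: once Lemma~\ref{lem:joint-p-box} and the Fréchet bounds in Eq.~\eqref{eq:joint-natural-extension} are in place, the theorem is a direct application. The only minor subtlety is making sure that one evaluates the hypotheses of Lemma~\ref{lem:joint-p-box} on exactly the right product events $\prod_i Z_i^{-1}([0,z])=Z^{-1}([0,z])$, which is precisely the decomposition already carried out in the proof of Lemma~\ref{lem:joint-p-box}; so no additional work is needed on that front.
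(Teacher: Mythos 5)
Your proposal is correct and follows exactly the paper's route: the paper's entire proof is ``Immediate, by Lemma~\ref{lem:joint-p-box} and Eqs.~\eqref{eq:joint-natural-extension}'', i.e., instantiating $\ell$ and $u$ with the Fr\'echet bounds. Your added check that the resulting pair $(\ldf,\udf)$ is genuinely a p-box is a correct and welcome detail that the paper leaves implicit.
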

\begin{proof}
  Immediate, by Lemma~\ref{lem:joint-p-box} and
  Eqs.~\eqref{eq:joint-natural-extension}.
\end{proof}

The next example shows that, even when $\lpr_i$ are p-boxes, the joint
p-box will in general only be an outer approximation
(although the closest one that is a p-box) of the joint lower prevision.

\begin{example}\label{exm:joint-pbox-outer-approx}
Consider two variables $X$ and $Y$ with domain $\mathcal{X}=\{x_1,x_2\}$, with $x_1 \prec x_2$, and $\mathcal{Y}=\{y_1,y_2\}$, with $y_1 \prec y_2$. Consider
\begin{align*}
\ldf_1(x_1)&=0.4, & \udf_1(x_1)&=0.6, & \ldf_1(x_2)&=\udf_1(x_2)=1, \\
\ldf_2(y_1)&=0.2, & \udf_2(y_1)&=0.3, & \ldf_2(y_2)&=\udf_2(y_2)=1.
\end{align*}
Let $\lpr_1$ be the natural extension of $(\ldf_1,\udf_1)$, and let $\lpr_2$ be the natural extension of $(\ldf_2,\udf_2)$.
Consider the events $A=\{x_1\} \times \mathcal{Y}$ and $B=\mathcal{X} \times \{y_2\}$. Writing $\lpr$ for $\boxtimes_{i=1}^n\lpr_i$, we have that
\begin{align*}
  \lpr(A)&=\lpr_1(\{x_1\})=\max\{0,\ldf_1(x_1)-\udf_1(x_1-)\}=0.4
  \\
  \lpr(B)&=\lpr_2(\{y_2\})=\max\{0,\ldf_2(y_2)-\udf_2(y_1)\}=0.7
\end{align*}
whence,
\begin{align*}
  \lpr(A \cup B)
  &
  =\max\{\lpr(A),\lpr(B)\}
  =0.7,
  \\
  \lpr(A \cap B)
  &
  =\max\{0,1-2+\lpr(A)+\lpr(B)\}
  =0.1.
\end{align*}
But this means that $\lpr$ is not even 2-monotone, because $\lpr(A \cup B) + \lpr(A \cap B) < \lpr(A) + \lpr(B)$. Therefore, $\lpr$ cannot be represented by a p-box, as p-boxes are completely monotone by Theorem~\ref{th:pboxmon}.
\end{example}

\subsection{Independent Natural Extension}

In contrast, the \emph{independent natural extension}
$\otimes_{i=1}^n\lpr_i$ of $\lpr_1$, \dots, $\lpr_n$ models epistemic
independence between $X_1$, \dots, $X_n$. We refer to
\cite{2010:decooman:ine} for a rigorous definition and properties. In
this paper, we only need the following equalities:
\begin{subequations}
  \label{eq:joint-independent-natural-extension}
\begin{align}
  \bigotimes_{i=1}^n\lpr_i\left(\prod_{i=1}^n A_i\right)
  &=
  \prod_{i=1}^n\lpr_i(A_i)\text{ and }
  \\
  \bigotimes_{i=1}^n\upr_i\left(\prod_{i=1}^n A_i\right)
  &=
  \prod_{i=1}^n\upr_i(A_i)
\end{align}
\end{subequations}
for all $A_1\subseteq \mathcal{X}_1$, \dots, $A_n\subseteq \mathcal{X}_n$.

\begin{theorem}\label{th:joint-independent}
  The p-box $(\ldf,\udf)$ defined by
  \begin{align*}
    \ldf(z)&=\prod_{i=1}^n\ldf_i(z) &
    \udf(z)&=\prod_{i=1}^n\udf_i(z)
  \end{align*}
  is the least conservative p-box on $(\pspace,\preceq)$ whose natural
  extension $\lnepbox$ is dominated by the indepedent natural extension
  $\otimes_{i=1}^n\lpr_i$ of $\lpr_1$, \dots, $\lpr_n$.
\end{theorem}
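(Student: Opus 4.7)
The plan is an immediate invocation of Lemma~\ref{lem:joint-p-box}, applied with the combination rule $\odot$ taken to be the independent natural extension $\otimes$. That lemma requires exhibiting two functions $\ell$ and $u$ such that, for every rectangular event $\prod_{i=1}^n A_i$, the joint lower and upper previsions of the combination decompose as $\ell$ and $u$ applied to the marginal lower and upper probabilities, respectively.

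Reading off Eqs.~\eqref{eq:joint-independent-natural-extension}, we simply take $\ell(t_1,\ldots,t_n) = \prod_{i=1}^n t_i$ and $u(t_1,\ldots,t_n) = \prod_{i=1}^n t_i$. Lemma~\ref{lem:joint-p-box} then tells us that the least conservative p-box on $(\pspace,\preceq)$ whose natural extension is dominated by $\bigotimes_{i=1}^n \lpr_i$ is obtained by substituting into these expressions the marginal distribution functions $\ldf_i(z)=\lpr_i(Z_i^{-1}([0,z]))$ and $\udf_i(z)=\upr_i(Z_i^{-1}([0,z]))$, yielding exactly $\ldf(z) = \prod_{i=1}^n \ldf_i(z)$ and $\udf(z) = \prod_{i=1}^n \udf_i(z)$.

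Since both ingredients—the structural Lemma~\ref{lem:joint-p-box} and the product form of the independent natural extension on rectangular events—are already available, there is no serious obstacle; the argument is essentially the same as for Theorem~\ref{th:joint-alldep}, differing only in the choice of combination rule and the associated functions $\ell$ and $u$. The only point worth verifying is that the product form given by Eqs.~\eqref{eq:joint-independent-natural-extension} indeed applies to the particular rectangles $\prod_{i=1}^n Z_i^{-1}([0,z])$ that arise when evaluating $\ldf$ and $\udf$ via Lemma~\ref{lem:joint-p-box}, but this is automatic because each $Z_i^{-1}([0,z])$ is a subset of $\mathcal{X}_i$ to which the marginal $\lpr_i$ and $\upr_i$ apply.
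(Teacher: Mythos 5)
Your proposal is correct and matches the paper's own proof exactly: the paper also cites Lemma~\ref{lem:joint-p-box} together with Eqs.~\eqref{eq:joint-independent-natural-extension}, with $\ell$ and $u$ both equal to the product function. No further comment is needed.
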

\begin{proof}
  Immediate, by Lemma~\ref{lem:joint-p-box} and
  Eqs.~\eqref{eq:joint-independent-natural-extension}.
\end{proof}

Again, in general, the joint p-box will only be an outer approximation
of the actual joint lower prevision.

\begin{example}\label{exm:ine-joint-pbox-outer-approx}
Again, consider two variables $X$ and $Y$ with domain $\mathcal{X}=\{x_1,x_2\}$, with $x_1 \prec x_2$, and $\mathcal{Y}=\{y_1,y_2\}$, with $y_1 \prec y_2$. Consider
\begin{align*}
\ldf_1(x_1)&=0.4, & \udf_1(x_1)&=0.6, & \ldf_1(x_2)&=\udf_1(x_2)=1, \\
\ldf_2(y_1)&=0.3, & \udf_2(y_1)&=0.5, & \ldf_2(y_2)&=\udf_2(y_2)=1.
\end{align*}
As before, let $\lpr_1$ be the natural extension of $(\ldf_1,\udf_1)$, and let $\lpr_2$ be the natural extension of $(\ldf_2,\udf_2)$.
Consider the events $A=\{(x_1,y_1),(x_1,y_2)\}$ and $B=\{(x_1,y_2),(x_2,y_1)\}$. Writing $\lpr$ for $\otimes_{i=1}^n\lpr_i$, we have that
\begin{align*}
  \lpr(A)&=\lpr_1(\{x_1\})=0.4
  \\
  \lpr(B)
  &\ge
  0.4
\end{align*}
where the last inequality follows from the fact that all probability mass functions $p$ which dominate $\lpr$ must satisfy $p(x_1|y_2)\ge\lpr(\{x_1\})=0.4$ and $p(x_2|y_1)\ge\lpr(\{x_2\})=0.4$, whence
\begin{equation*}
  p(B)=p(x_1|y_2)p(y_2)+p(x_2|y_1)p(y_1)\ge 0.4(p(y_1)+p(y_2))=0.4
\end{equation*}
for all $p$ which dominate $\lpr$. Because $\lpr$ is the lower envelope of all such $p$, the desired inequality follows.\footnote{By
linear programming, it can actually be shown that $\lpr(B)=0.4$.}
Also, because of the factorization property of the independent natural extension,
\begin{align*}
  \lpr(A \cup B)
  &=1-\upr(\{(x_2,y_2)\})
  =1-\upr_1(\{x_2\})\upr_2(\{y_2\})=1-0.6\times 0.7
  =0.58
  \\
  \lpr(A \cap B)
  &=\lpr(\{(x_1,y_2)\})
  =\lpr_1(\{x_1\})\lpr_2(\{y_2\})
  =0.4\times 0.5=0.2.
\end{align*}
Again, this means that $\lpr$ cannot be represented by a p-box, as it violates 2-monotonicity.
\end{example}

\subsection{Special Case: Probabilistic Arithmetic}
\label{sec:prob:arithm}

Let $Y=X_1+X_2$ with $X_1$ and $X_2$ real-valued random variables. One
can also consider substraction, multiplication, and division, but for
simplicity, we stick to addition---the other three cases follow
along almost identical lines.

Probabilistic
arithmetic~\cite{1989:williamson} deals with the problem of estimating
$\lpr_Y([-\infty,y])=\ldf_Y(y)$ and $\upr_Y([-\infty,y])=\udf_Y(y)$ for
any $y \in \reals$ under the assumptions that
\begin{itemize} 
\item the uncertainty on $X_1$ and $X_2$ is given by p-boxes
  $(\ldf_1,\udf_1)$ and $(\ldf_2,\udf_2)$, with $\preceq_1$ and
  $\preceq_2$ the natural ordering of real numbers,\footnote{For
    substraction and division, $\preceq_2$ is the reverse natural
    ordering.} and
\item the dependence structure is completely
  unknown (Fr\'echet case).
\end{itemize}
Using the Fr\'echet bounds, Williamson and
Downs~\cite{WilliamsonDowns1990} provide explicit formulae for the
different arithmetic operations, thus providing very efficient
algorithms to make inferences from marginal p-boxes.

Let us show, for the particular case of addition,
that their results are captured by
our joint p-box proposed in Theorem~\ref{th:joint-alldep}.
Other
cases, not treated here to save space, follow from almost identical
reasoning.\footnote{Note that $X_1$ and $X_2$ are assumed to be positive
  in case of multiplication and division.}
The lower cumulative
distribution function resulting from probabilistic arithmetic is, for
any $y \in \reals$
\begin{equation}\label{eq:willdownsplus}
  \ldf_{X_1 + X_2}(y)=\sup_{x_1,x_2\colon x_1 + x_2=y}\max\{0,\ldf_{1}(x_1)+\ldf_2(x_2)-1\}.
\end{equation}
Without much loss of generality, and for our convenience, assume that
both $X_1$ and $X_2$ lie in a bounded interval $[a,b]$.

Let $Z_1$ and $Z_2$ be any surjective maps $[a,b]\to[0,1]$ which
induce the usual ordering on $[0,1]$. Some properties of $Z_1$ and
$Z_2$ immediately follow: both are continuous and strictly increasing,
and so are their inverses---we rely on this in a bit.

To apply Theorem~\ref{th:joint-alldep}, we
consider the total preorder $\preceq$ on $\pspace=[a,b]^2$ induced by
$Z(x_1,x_2)=\max\{Z_1(x_1),Z_2(x_2)\}$.
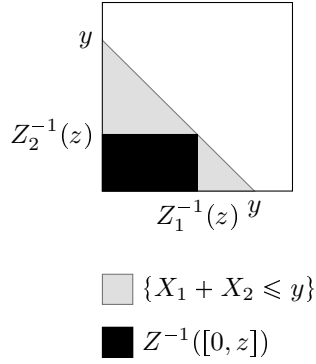
\begin{figure}
  \begin{tikzpicture}
  \begin{scope}[scale=2.5]
  \draw (0,0) rectangle (1,1) ;
  \draw[fill=gray!50,opacity=0.5] (0,0.8) node[left,opacity=1] {$y$} -- (0,0) -- (0.8,0) node[below,opacity=1] {$y$} -- cycle;
  \draw[fill=black] (0,0) -- (0,0.3) node[left]  {$Z_2^{-1}(z)$} -- (0.5,0.3) -- (0.5,0) node[below] {$Z_1^{-1}(z)$} -- cycle;

  \draw[fill=gray!50,opacity=0.5,scale=0.8,yshift=-10] (0,-0.4) -- (0.2,-0.4) -- (0.2,-0.2) node[midway,right,opacity=1] {$\{X_1 + X_2 \leq y\}$} -- (0,-0.2) -- cycle;
  \draw[scale=0.8,yshift=-20,fill=black] (0,-0.4) -- (0.2,-0.4) -- (0.2,-0.2) node[midway,right] {$Z^{-1}([0,z])$} -- (0,-0.2) -- cycle;
  \end{scope}
  \end{tikzpicture}
  \caption{The event $\{X_1 + X_2\le y\}$, and the largest interval $Z^{-1}([0,z])$ included in it.}
  \label{fig:WilliamDowns}
\end{figure}
Figure~\ref{fig:WilliamDowns} illustrates the event\footnote{
By $\{X_1 +X_2\le y\}$ we mean $\{(x_1,x_2)\in[0,1]^2\colon x_1+x_2\le y\}$.}
$\{X_1 +X_2\le y\}$, with $y\in[2a,2b]$, as well as the largest interval $Z^{-1}([0,z])$ included in it. Recall that
\begin{equation*}
  Z^{-1}([0,z])=Z_1^{-1}([0,z])\times Z_2^{-1}([0,z])=[0,Z_1^{-1}(z)]\times[0,Z_2^{-1}(z)].
\end{equation*}
Whence, for $z$ such that $Z_1^{-1}(z)+Z_2^{-1}(z)=y$, we achieve the
largest interval $Z^{-1}([0,z])$ which is still included in $\{X_1
+X_2\le y\}$. There is always a unique such $z$ because
also $Z_1^{-1}+Z_2^{-1}$ is continuous and strictly increasing.

Recall that, by Theorem~\ref{th:joint-alldep} (now without shortcuts
in notation),
\begin{align*}
  \ldf(Z^{-1}(z))&=\max\left\{0,\ldf_1(Z^{-1}_1(z))+\ldf_2(Z^{-1}_2(z))-1\right\}
  \\
  \udf(Z^{-1}(z))&=\min_{i=1}^n\{\udf_1(Z^{-1}_1(z)),\udf_2(Z^{-1}_2(z))\}
\end{align*}
is the least conservative p-box on $(\pspace,\preceq)$ whose natural
extension is dominated by the natural extension
$\lpr_1\boxtimes\lpr_2$ of $\lpr_1$ and $\lpr_2$.

Also, as we have just shown, $Z^{-1}([0,z])$, for our choice of $z$, is the
topological interior of $\{X_1 +X_2\le y\}$. Whence, by
Theorem~\ref{thm:nex-z}, we find that
\begin{align*}
  \lnepbox(\{X_1+X_2\le y\})
  &=
  \lnepbox(Z^{-1}([0,z]))
  =
  \ldf(Z^{-1}(z))
  \\
  &=
  \max\{0,\ldf_1(Z^{-1}_1(z))+\ldf_2(Z^{-1}_2(z))-1\}
\end{align*}
where we remember that $z$ is chosen such that
$Z_1^{-1}(z)+Z_2^{-1}(z)=y$.

But, this holds for every valid choice of functions $Z_1$ and $Z_2$, whence
\begin{equation*}
  \lpr_1\boxtimes\lpr_2(\{X_1+X_2\le y\})
  \ge
  \sup_{x_1,x_2\colon x_1+x_2=y}\max\{0,\ldf_1(x_1)+\ldf_2(x_2)-1\}
\end{equation*}
which indeed coincides with Eq.~\eqref{eq:willdownsplus}. Similar
arguments hold for the upper cumulative distribution functions, and
other arithmetic operations.

In conclusion, probabilistic arithmetic constitutes a very
specific case of our approach.

\section{Examples}
\label{sec:example:engineering}

In this section, we investigate two different examples in which p-boxes are used to model uncertainty around some parameters. The first example concerns a damped harmonic oscillator, i.e., a classical engineering toy example. The second example concerns the evaluation of a river dike height, an important issue in regions subject to potential floods.

\subsection{Damped Harmonic Oscillator}

Consider a simple damped harmonic oscillator, with damping coefficient
$c>0$, spring constant $k>0$, and mass $m>0$. The \emph{damping ratio}
\begin{equation*}
  \zeta(c,k)=\frac{c}{2\sqrt{km}}
\end{equation*}
determines how quickly the oscillator returns to its equilibrium
state---$\zeta(c,k)=1$ means fastest convergence. Suppose the engineering
design has already been completed, so the optimal values for $c^*$ and
$k^*$ have been determined, such that $\zeta(c^*,k^*)=1$.

Without loss of generality, we choose the units for mass, time, and
length, such that $m=k^*=\zeta(c^*,k^*)=1$ (so $c^*=2$).

Of course, the actual values for $c$ and $k$ will differ from their
design values, and uncertainty must be taken into account. Let us
calculate the lower and upper expectation of $\zeta(c,k)$, given that our
uncertainty about $c$ and $k$ is described by a p-box.

First, we must specify a preorder. For this problem, it seems fairly
natural to have bounds on the quantiles of the distance between the
actual values $(c,k)$ and the design values $(2,1)$. This comes
down to for instance the following choice for $Z$:
\begin{equation*}
  Z(c,k)=\max\{|c-2|,2|k-1|\}
\end{equation*}
For simplicity, we only consider the region $Z(c,k)\le 1$. This means that we are certain that $c\in[1,3]$ and $k\in[0.5,1.5]$---if
necessary, $Z$ can be rescaled to accomodate larger or smaller
regions. Note that
we have taken a supremum norm as distance. This simplifies the
calculations below, but of course, one might as well take the
Euclidian norm, or any other reasonable distance function, at the
expense of slightly more complicated calculations and dependency modelling (see Fig.~\ref{fig:harmonic:equivclasses}).

\begin{figure}
\begin{tikzpicture}
\tikzstyle{blackdot}=[circle,draw=black,fill=black,scale=0.4]
\draw[->] (0,0) -- (4cm,0) node[below] {$c$};
\draw[->] (0,0) -- (0,4cm) node[left] {$k$};
\draw (2cm,0.1cm) -- (2cm,-0.1cm) node[below] {$2$};
\draw (0.1cm,2cm) -- (-0.1cm,2cm) node[left] {$1$};
\draw[thick] (1cm,1cm) rectangle (3cm,3cm);
\node at (2cm,2cm) {$[(c,k)]_\simeq$};
\node[blackdot] at (1cm,3cm) { };
\node[blackdot] at (3cm,1cm) {};
\node[above] at (1cm,3cm) { $\losc(\zeta)(z)$};
\node[below] at (3cm,1cm) {$\uosc(\zeta)(z)$};
\node at (2cm,-1cm) {Rectangular equivalence class.};

\begin{scope}[xshift=5.5cm]
\draw[->] (0,0) -- (4cm,0) node[below] {$c$};
\draw[->] (0,0) -- (0,4cm) node[left] {$k$};
\draw (2cm,0.1) -- (2cm,-0.1cm) node[below] {$2$};
\draw (0.1cm,2cm) -- (-0.1cm,2cm) node[left] {$1$};
\begin{scope}[xshift=2cm,yshift=2cm]
\draw[thick,rotate=-45] (0,0) ellipse (0.6cm and 2cm);
\node at (0cm,0cm) {$[(c,k)]_\simeq$};
\end{scope}
\node at (2cm,-1cm) {Elliptical equivalence class.};
\end{scope}
\end{tikzpicture}
\caption{Different possible equivalence classes.}
\label{fig:harmonic:equivclasses}
\end{figure}
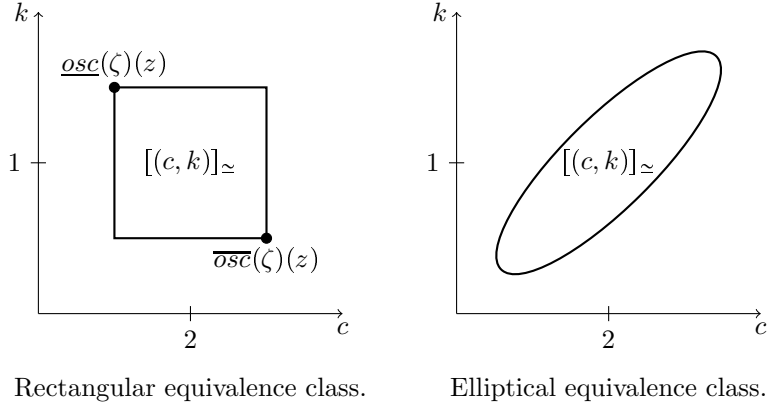

Equivalence classes $[(c,k)]_\simeq$ are edges of rectangles with
vertices $$(2\pm Z(c,k),1\pm Z(c,k)/2).$$

What is a p-box for the preorder $\preceq$ induced by $Z$? A p-box
$(\ldf,\udf)$ specifies lower and upper bounds for the probability of
concentric rectangles around the design point $(2,1)$:
\begin{equation*}
  \ldf(z)\le p(\{(c,k)\colon Z(c,k)\le z\})\le \udf(z)
\end{equation*}
So, effectively, our p-box specifies concentric prediction regions for
the uncertain parameters $c$ and $k$.

We can now calculate the lower and upper expectation of
$\zeta(c,k)$. First, we calculate the lower oscillation
$\losc(\zeta)$ and upper oscillation $\uosc(\zeta)$ (see Fig.~\ref{fig:losc:uosc}):
\begin{align*}
  \losc(\zeta)(z)
  &=\inf_{(c,k)\colon Z(c,k)=z}\zeta(c,k)
  =\frac{2-z}{2\sqrt{(1+z/2)}}
  \\
  \uosc(\zeta)(z)
  &=\sup_{(c,k)\colon Z(c,k)=z}\zeta(c,k)
  =\frac{2+z}{2\sqrt{(1-z/2)}}
\end{align*}
\begin{figure}
\begin{tikzpicture}
\draw[->] (-0.1,0) -- (1.1,0) node[below right] {$t$};
\draw[->] (0,-0.1) -- (0,0) node[below left] {$0$} -- (0,0.4082) node[left]{$\tfrac{1}{\sqrt{6}}$} -- (0,1) node[left] {$1$} -- (0,2.1213) node[left] {$\tfrac{3}{\sqrt{2}}$} -- (0,2.5);
\draw[domain=0:1, thick]  plot[id=losc] function{(2-x)/(2*((1+x/2)**0.5))};
\draw[domain=0:1, thick]  plot[id=uosc] function{(2+x)/(2*((1-x/2)**0.5))};
\draw[dashed] (0, 0.4082) -- (1, 0.4082) node[above right] {$\losc(\zeta)(z)$};
\draw[dashed] (0, 2.1213) -- (1, 2.1213) node[below right] {$\uosc(\zeta)(z)$};
\draw (1,-0.1) node[below] {$1$} -- (1,0.1);
\end{tikzpicture}
\caption{The lower oscillation $\losc(\zeta)(z)$ and upper oscillation $\uosc(\zeta)(z)$.}
\label{fig:losc:uosc}
\end{figure}
Next, we find the full components of the events
\begin{align*}
  L_t
  &=\{z\in[0,1]\colon\losc(\zeta)(z)\ge t\}
  =
  \left\{z\in[0,1]\colon\frac{2-z}{2\sqrt{(1+z/2)}}\ge t\right\}
  \\
  U_t
  &=\{z\in[0,1]\colon\uosc(\zeta)(z)\ge t\}
  =
  \left\{z\in[0,1]\colon\frac{2+z}{2\sqrt{(1-z/2)}}\ge t\right\}
\end{align*}
for all $t\in[0,0.5]$. Fortunately, $\losc(\zeta)$ is decreasing as
function of $z$, and $\uosc(\zeta)$ is increasing, and hence
$L_t=[0,\ell_t]$ and $U_t=[u_t,1]$, with (see Fig.~\ref{fig:zoft})
\begin{align*}
  \ell_t&=2-t(-t+\sqrt{t^2+8}):=z(t) && \text{ with }t\in[\tfrac{1}{\sqrt{6}},1]
  \\
  u_t&=-2+t(-t+\sqrt{t^2+8}):=-z(t) && \text{ with }t\in[1,\tfrac{3}{\sqrt{2}}]
\end{align*}
Note that the given bounds for $t$ arise from the minimum and maximum
of $\losc(f)$ and $\uosc(f)$.  Concluding, by
Proposition~\ref{prop:lnex-z-losc}, when $\ldf(z)=\ldf(z-)$ for all $z\in[0,1]$ and $\ldf(0)=0$,
\begin{align*}
  \lnex(\zeta)
  &=
  \frac{1}{\sqrt{6}}+\int_{\frac{1}{\sqrt{6}}}^{1}
  \ldf(z(t))\dif t
  \\
  \unex(\zeta)
  &=
  1+\int_{1}^{3/\sqrt{2}}
  \big(1-\ldf(-z(t))\big)\dif t
\end{align*}
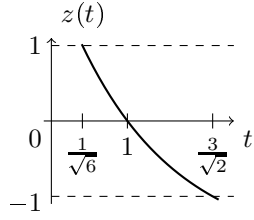
\begin{figure}
\begin{tikzpicture}
\draw[->] (-0.1,0) -- (2.4,0) node[below right] {$t$};
\draw[->] (0,-1.1) node[left] {$-1$} -- (0,0) node[below left] {$0$} -- (0,1) node[left] {$1$} -- (0,1.1) node[above right]{$z(t)$};
\draw[domain=0.4:2.2, thick]  plot[id=zoft] function{2-x*(-x+(x**2+8)**0.5)};
\draw[dashed] (0, 1) -- (2.4, 1);
\draw[dashed] (0,-1) -- (2.4,-1);
\draw (1,-0.1) node[below] {$1$} -- (1,0.1);
\draw (0.4082,-0.1) node[below] {$\tfrac{1}{\sqrt{6}}$} -- (0.4082,0.1);
\draw (2.1213,-0.1) node[below] {$\tfrac{3}{\sqrt{2}}$} -- (2.1213,0.1);
\end{tikzpicture}
\caption{The function $z(t)$ which determines the cut sets.}
\label{fig:zoft}
\end{figure}

Interestingly, both the lower and upper expectation of $\zeta$ are
determined by the lower cumulative distribution function only. Hence, in this problem, we
actually do not need to elicit the upper cumulative distribution
function.\footnote{Of course this will not always be the case---it
  just happens to be so for this example.}

For example, if the expert says that $c$ and $k$ are independent, and the marginal lower cumulative distribution functions are uniform on $c\in [1,2]$ and $k\in[0.5,1.5]$, so $\ldf_1(z)=\ldf_2(z)=z$, with preorders induced by $Z_1(c)=|c-2|$ and $Z_2(k)=2|k-1|$,
then, by Theorem~\ref{th:joint-independent}, because $Z=\max\{Z_1,Z_2\}$, it follows that $\ldf(z)=z^2$, and
\begin{align*}
  \lnex(\zeta)
  &=
  \frac{1}{\sqrt{6}}+\int_{\frac{1}{\sqrt{6}}}^{1}
  z(t)^2\dif t
  =0.584
  \\
  \unex(\zeta)
  &=
  1+\int_{1}^{3/\sqrt{2}}
  \big(1-z(t)^2\big)\dif t
  =1.664
\end{align*}

\subsection{River Dike Height Estimation}

We aim to estimate the minimal required dike
height along a given stretch of river, using a simplified model that is used
by the EDF (the French integrated energy operator) to make initial evaluations \cite{2008:rocquigny}. Although this model is quite simple, it provides a realistic
industrial application. Skipping technical details, the model results in the following relationship:
\begin{equation}
  \label{eq:EDFmodel}
  h(q,k,u,d)
  =
  \begin{cases}
    \left(\frac{q}{k\sqrt{\frac{u-d}{\ell}}b}\right)^{\frac{3}{5}} & \text{if }q\ge 0 \\
    0& \text{otherwise}.
  \end{cases}
\end{equation}
The meaning of the variables is summarised in Table~\ref{tab:EDFvar}.
\begin{table}
\begin{center}
\begin{tabular}{cll}
symbol & name & unit  \\
\hline
$h$ & overflow height of the river & $m$ \\
$q$ & river flow rate & $m^3 s^{-1}$\\
    $b$ & river width & $m$ \\
     $k$ & Strickler coefficient & $m^{1/3} s^{-1}$ \\
     $u$ & upriver water level & $m$\\
    $d$ & downriver water level & $m$\\
     $\ell$  & length of river stretch & $m$
\end{tabular}
\end{center}
\caption{Meaning of the variables used in Eq.~\eqref{eq:EDFmodel}}
\label{tab:EDFvar}
\end{table}

For the particular case under study, the river width is $b=300 m$ and the river length is $\ell=6400 m$. The remaining parameters are uncertain. Expert assessment leads to the following distributions:
\begin{itemize}
\item The maximal flow rate $q$ has a Gumbel distribution\footnote{The Gumbel distribution models the maximum of an exponentially distributed sample, and is used in extreme value theory \cite{2006:dehaan} to model rare events such as floods.} with
  location parameter $\mu=1335 m^3 s^{-1}$ and scale parameter $\beta=716
  m^3 s^{-1}$. For calculations, it is easier to work with
  symmetric distributions. Therefore, we introduce a variable $r$
  satisfying
  \begin{equation*}
    q=\mu-\beta\ln(-\ln(r)).
  \end{equation*}
  If $r$ is uniformly distributed over $[0,1]$, then $q$ has the
  Gumbel distribution with location parameter $\mu$ and scale
  parameter $\beta$.
  So, after transformation,
  \begin{equation*}
    h(r,k,u,d)
    =
    \begin{cases}
      \left(\frac{\mu-\beta\ln(-\ln(r))}{k\sqrt{\frac{u-d}{\ell}}b}\right)^{\frac{3}{5}} & \text{if $\mu-\beta\ln(-\ln(r))\ge 0$} \\
      0 & \text{otherwise}.
    \end{cases}
  \end{equation*}
\item The Strickler coefficient $k$ has a symmetric triangular
  distribution over the interval $[15 m^{1/3} s^{-1},45 m^{1/3} s^{-1}]$ (with mode
  at $k^*=30 m^{1/3} s^{-1}$).
\item There is also uncertainty about the water levels $u$ and $d$,
  because sedimentary conditions are hard to characterise. Measured
  values are $u^*=55 m$ and $d^*=50m$, with measurement error
  definitely less than $1m$. These are also modelled by symmetric
  triangular distributions, on $[54m,56m]$ and $[49m,51m]$
  respectively.
\end{itemize}

Again, a natural choice for $Z$ is the distance between the expected values $(r^*=1/2,k^*=30,u^*=55,d^*=50)$ and the actual values $(r,k,u,d)$:
\begin{equation*}
  Z(r,k,u,d)=\max\{2|r-1/2|,|k-30|/15,|u-55|,|d-50|\},
\end{equation*}
The scale of the distances has been chosen such that $Z(r,k,u,d) \leq 1$ for all points of interest. Equivalence classes $[(r,k,u,d)]_\simeq$ are borders of $4$-dimentional boxes with
vertices $$((1\pm z)/2, 30\pm 15 z, 55 \pm z,50\pm z)$$
where $z=Z(r,k,u,d)$.

The marginal p-boxes are, for $r$:
\begin{equation*}
  \ldf_1(z)=\udf_1(z)=p(2|r-1/2|\le z)=p(r\in[(1-z)/2,(1+z)/2])=z
\end{equation*}
because $r$ is uniformly distributed over $[0,1]$.
For $k$, we have:
\begin{equation*}
  \ldf_2(z)=\udf_2(z)=p(|k-30|/15\le z)=p(k\in[30-15 z,30+ 15 z])=1-(1-z)^2
\end{equation*}
(see Fig.~\ref{fig:pofkforldf}).
Similarly, for $u$ and $d$, it is easily verified that:
\begin{equation*}
  \ldf_3(z)=\udf_3(z)=\ldf_4(z)=\udf_4(z)=1-(1-z)^2
\end{equation*}

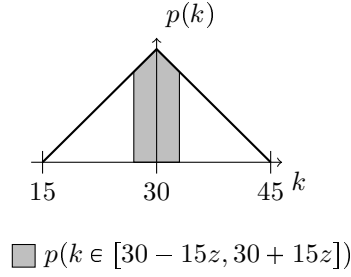
\begin{figure}
  \begin{tikzpicture}[scale=1.5]
    \draw[fill=gray!50] (-0.2,0) -- (-0.2,0.8) -- (0,1) -- (0.2,0.8) -- (0.2,0) -- cycle;
    \draw[fill=gray!50,yshift=-15,xshift=-36] (0,-0.4) -- (0.2,-0.4) -- (0.2,-0.2) node[midway,right] {$p(k\in[30-15 z,30+ 15 z])$} -- (0,-0.2) -- cycle;
    \draw[->] (-1.1,0) -- (1.1,0) node[below right] {$k$};
    \draw[->] (0,-0.1) node[below]{$30$} -- (0,1.1) node[above right]{$p(k)$};
    \draw[-,thick] (-1,0) -- (0,1) -- (1,0);
    \draw[-] (-1,-0.1) node[below]{$15$} -- (-1,0.1);
    \draw[-] (1,-0.1) node[below]{$45$} -- (1,0.1);
  \end{tikzpicture}
  \caption{Derivation of the p-box for a triangular distribution.}
  \label{fig:pofkforldf}
\end{figure}

The lower oscillation
$\losc(h)$ and upper oscillation $\uosc(h)$ can be calculated along the same lines as in the previous example:
\begin{align*}
  \losc(h)(z)
  &=\inf_{(r,k,u,d)\colon Z(r,k,u,d)=z}h(r,k,u,d)=o(-z)
  \\
  \uosc(h)(z)
  &=\sup_{(r,k,u,d)\colon Z(r,k,u,d)=z}h(r,k,u,d)
  =o(z)
\end{align*}
with
\begin{align*}
  o(z)&=
  \begin{cases}
    \left(\frac{\mu-\beta\ln(-\ln((1+z)/2))}{(30-15z)\sqrt{\frac{5-2z}{\ell}}b}\right)^{\frac{3}{5}} & \text{if $\mu-\beta\ln(-\ln((1+z)/2))\ge 0$}
    \\
    0 & \text{otherwise}.
  \end{cases}
\end{align*}
The function $o(z)$ is depicted in Fig.~\ref{fig:oofz}: it is increasing, with $o(-1)=0$ (this is not immediately clear from the picture, but at higher scale, it becomes apparent), $o(0)=3.032$, and $o(1)=+\infty$.
\begin{figure}
\begin{tikzpicture}[xscale=1, yscale=0.2]
\draw[->] (-1.1,0) -- (1.1,0) node[below right] {$z$};
\draw[->] (0,-0.5) node[below]{$0$} -- (0,11) node[above right]{$o(z)$};
\draw[-,dashed] (-1,0) -- (-1,11);
\draw[-,dashed] (1,0) -- (1,11);
\draw[-] (-1,-0.5) node[below]{$-1$} -- (-1,0.5);
\draw[-] (1,-0.5) node[below]{$1$} -- (1,0.5);
\draw[-] (-0.1,5) node[left]{$5$} -- (0.1,5);
\draw[-] (-0.1,10)  node[left]{$10$} -- (0.1,10);
\draw[smooth,domain=-0.99:0.99, thick]  plot[id=oofz,samples=100] function{(((1335.0-716.0*log(-log((1.0+x)/2.0))))/((30.0-15.0*x)*sqrt((5.0-2.0*x)/6400.0)*300.0))**(3.0/5.0)};
\end{tikzpicture}
\caption{The function $o(z)$ which determines the lower and upper oscillation, and the cut sets.}
\label{fig:oofz}
\end{figure}
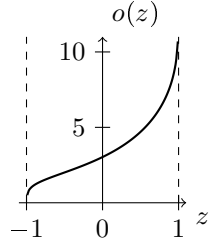

Again, $\losc(h)(z)$ and $\uosc(h)(z)$ are decreasing and increasing in $z$, respectively. Hence the full components of the events
\begin{align*}
  L_t
  &=\{z\in[0,1]\colon\losc(h)(z)\ge t\}=\{z\in[0,1]\colon o(-z)\ge t\}
  \\
  U_t
  &=\{z\in[0,1]\colon\uosc(h)(z)\ge t\}=\{z\in[0,1]\colon o(z)\ge t\}
\end{align*}
are of the form $L_t=[0,\ell_t]$ and $U_t=[u_t,1]$ again, with
\begin{align*}
  \ell_t&=-o^{-1}(t)\text{ for }t\le o(0)
  &
  u_t&=o^{-1}(t)\text{ for }t\ge o(0)
\end{align*}

As in the previous example, we do not need to elicit the upper cumulative distributions, and only the lower ones need to be given. With unknown dependence, using Theorem~\ref{th:joint-alldep}, we have
$$\ldf(z)=\max\{0, -3 + z + 3(1-(1-z)^2)\}$$
and whence
\begin{align*}
  \lnex(h)
  &=
  \int_{0}^{o(0)}
  \ldf(-o^{-1}(t))\dif t
  =
  1.515
  \\
  \unex(h)
  &=
  o(0)+\int_{o(0)}^{+\infty}
  \big(1-\ldf(o^{-1}(t))\big)\dif t
  =
  6.423
\end{align*}
Therefore, to be on the safe
side, we should consider average overflowing heights of
at least $6.5m$. For comparison, using traditional methods instead of p-boxes, $h$ has expectation $3.2m$, assuming independence between all variables---this lies between the lower and upper expectation that we just calculated, as expected. The interval is obviously much wider:
\begin{itemize}
\item because we have reduced a multivariate problem to a univariate one, whence, leading to imprecision due to the difference between lower and upper oscillation,
\item and because we have not made any assumption of independence, whence, leading to imprecision due to weaker assumptions.
\end{itemize}

Realistically, the decision maker may desire a dike height $t$ such that the upper probability of disaster $\unex(\{h\ge t\})$ is less than a given threshold. It is easily verified that:
\begin{equation*}
  \unex(\{h\ge t\})=\unex(\{\uosc(h)\ge t\})=1-\ldf(o^{-1}(t))
\end{equation*}
For instance, for $\unex(\{h\ge t\})=0.01$, we need $t$ to be $10.725m$.  For comparison, using traditional methods instead of p-boxes, $t$ needs to be about $9m$, assuming independence between all variables.

In both examples, analytical calculations are relatively simple due to the monotonicity of the target function with respect to the uncertain variables. Of course, this may not be the case in general.

\section{Conclusions}\label{sec:conclusions}

P-boxes are one of the most interesting imprecise probability models
from an operational point of view, because they are simply characterised by
a lower and an upper cumulative distribution function.
In this paper, for the purpose of multivariate modelling, we
studied inferences (lower and upper expectations in particular) from p-boxes on arbitrary totally preordered spaces.
For this purpose, we represented p-boxes as
coherent lower previsions, and studied their natural extension.

We used an as general as possible model by considering p-boxes whose
lower and upper cumulative distribution functions are defined on a
totally preordered space. Thereby, we extended the theory of p-boxes
from finite to infinite sets, and from total orders to total
preorders. This allowed us unify
p-boxes on finite spaces and on intervals of reals numbers, and
to extend the theory to
the multivariate case.

One very interesting result of this paper is a practical
means of calculating the natural extension of a p-box in this general setting. We proved
that the natural extension of a p-box is arbitrarily additive on
full components of clopen sets with respect to the partition
topology induced by equivalence classes of the underlying preorder
(Theorem~\ref{thm:component-additivity},
Corollaries~\ref{cor:nex-finite} and~\ref{cor:nex-connected}). We
also proved that the natural extension is completely monotone, and
therefore has a Choquet integral representation
(Theorem~\ref{theo:natex-choquet}). Consequently, to calculate the
natural extension, we proved that it suffices to calculate the full
components of the cut sets of the lower oscillation, followed by a
simple Riemann integral (Proposition~\ref{prop:lnex-losc}).

As a special case, we studied p-boxes whose preorders are induced by a
real-valued mapping. Such p-boxes are particularly attractive,
as they allow to build or elicit a multivariate
uncertainty model at once. They correspond to lower and upper
probabilistic bounds given over nested regions that can take arbitrary
shapes.

Consequently,
we provided a new tool to combine marginal p-boxes into a joint p-box, under arbitrary rules of combination, thereby allowing any type of dependency modelling (Lemma~\ref{lem:joint-p-box}). As examples, we considered two extreme cases: assuming nothing about dependence (Theorem~\ref{th:joint-alldep}), and assuming epistemic independence (Theorem~\ref{th:joint-independent}). Similar formulas are easily derived for any other rule of combination.
Moreover, Williamson and Downs's~\cite{WilliamsonDowns1990} probabilistic arithmetic obtains as a special case of our approach.

We demonstrated our methodology
on inference about a damped harmonic oscillator, and on a river dike assessment,
showing that calculations are generally straightforward.

Of course, many open problems regarding p-boxes remain to be investigated.
For instance, even though there need not be any relation between the preorder and
the dependency model---because one can, in theory at least,
always construct a multivariate p-box from marginals for any dependency model and any
 preorder---some combinations obviously lead to more
imprecision than others. Our choice led to simple mathematical expressions, but is perhaps not the best one possible in terms of precision. Can the dependency model inform the
choice of preorder, to arrive at tighter bounds?
Also,
the connection of p-boxes with other uncertainty
models, such as possibility measures and clouds, deserves further investigation.

\section*{Acknowledgements}

Work partially supported by a doctoral grant from the IRSN. We are
particularly grateful to Enrique Miranda for the many very fruitful
discussions, extremely useful suggestions, and various contributions
to this paper. We also thank Gert de Cooman and Didier Dubois for
their help with a very early draft of this paper.
Finally, we thank both reviewers, whose constructive comments
helped improving the presentation of the paper substantially.

\bibliographystyle{plain}
\bibliography{references}

\begin{thebibliography}{10}

\bibitem{artzner1999}
Ph. Artzner, F.~Delbaen, J.-M. Eber, and D.~Heath.
\newblock Coherent measures of risk.
\newblock {\em Mathematical Finance}, 9:203--228, 1999.

\bibitem{2006:benhaim}
Y.~Ben-Haim.
\newblock {\em Info-gap decision theory: decisions under severe uncertainty}.
\newblock Academic Press, London, 2006.

\bibitem{1959:berge}
C.~Berge.
\newblock {\em Espaces topologiques, fonctions multivoques}.
\newblock Dunod, Paris, 1959.

\bibitem{1854:boole}
G.~Boole.
\newblock {\em An investigation of the laws of thought on which are founded the
  mathematical theories of logic and probabilities}.
\newblock Walton and Maberly, London, 1854.

\bibitem{choquet1953}
G.~Choquet.
\newblock Theory of capacities.
\newblock {\em Annales de l'Institut {F}ourier}, 5:131--295, 1953--1954.

\bibitem{cooman1997ab}
G.~{d}e Cooman.
\newblock Possibility theory {I}--{III}.
\newblock {\em International Journal of General Systems}, 25:291--371, 1997.

\bibitem{2010:decooman:ine}
G.~de~Cooman, E.~Miranda, and M.~Zaffalon.
\newblock Independent natural extension.
\newblock In Eyke H\"ullermeier, Rudolf Kruse, and Frank Hoffmann, editors,
  {\em Computational Intelligence for Knowledge-Based Systems Design}, Lecture
  Notes in Computer Science, pages 737--746. Springer, 2010.

\bibitem{cooman2003c}
G.~{d}e Cooman and M.~C.~M. Troffaes.
\newblock Coherent lower previsions in systems modelling: products and
  aggregation rules.
\newblock {\em Reliability Engineering and System Safety}, 85:113--134, 2004.

\bibitem{cooman2008c}
G.~{d}e Cooman, M.~C.~M. Troffaes, and E.~Miranda.
\newblock $n$-{M}onotone exact functionals.
\newblock {\em Journal of Mathematical Analysis and Applications},
  347(1):143--156, 2008.

\bibitem{cooman2008}
G.~{d}e Cooman, M.~C.~M. Troffaes, and E.~Miranda.
\newblock A unifying approach to integration for bounded positive charges.
\newblock {\em Journal of Mathematical Analysis and Applications},
  340(2):982--999, 2008.

\bibitem{finetti19745}
B.~{d}e Finetti.
\newblock {\em Theory of Probability}.
\newblock John Wiley \& Sons, Chichester, 1974--1975.

\bibitem{2006:dehaan}
L.~de~Haan and A.~Ferreira.
\newblock {\em Extreme Value Theory: An Introduction}.
\newblock Springer, 2006.

\bibitem{2008:rocquigny}
E.~{d}e Rocquigny, N.~Devictor, and S.~Tarantola.
\newblock {\em Uncertainty in Industrial Practice: A guide to Quantitative
  Uncertainty Management}, chapter 10. Partial Safety Factors to Deal with
  Uncertainties in Slope Stability of River Dykes.
\newblock Wiley, 2008.

\bibitem{1965:debreu}
G.~Debreu.
\newblock Integration of correspondences.
\newblock In {\em Fifth Berkeley Symposium of Mathematical Statistics and
  Probability}, pages 351--372, Berkeley (USA), 1965.

\bibitem{delbaen2002}
F.~Delbaen.
\newblock Coherent risk measures on general probability spaces.
\newblock In K.~Sandmann and P.~J. Sch\"onbucher, editors, {\em Advances in
  Finance and Stochastics}, pages 1-- 37. Delbaen, F, Berlin, 2002.

\bibitem{dempster1967}
A.~P. Dempster.
\newblock Upper and lower probabilities induced by a multivalued mapping.
\newblock {\em Annals of Mathematical Statistics}, 38:325--339, 1967.

\bibitem{desterckedubois2006}
S.~Destercke and D.~Dubois.
\newblock A unified view of some representations of imprecise probabilities.
\newblock In J.~Lawry, E.~Miranda, A.~Bugar\'{\i}n, S.~Li, M.A. Gil,
  P.~Gregorzewski, and O.~Hryniewiczet, editors, {\em Soft Methods for
  Integrated Uncertainty Modelling}, Advances in Soft Computing, pages
  249--257, Bristol, 2006. Springer.

\bibitem{2009:destercke}
S.~Destercke and D.~Dubois.
\newblock The role of generalised p-boxes in imprecise probability models.
\newblock In Thomas Augustin, Frank P.~A. Coolen, Seraf\'{\i}n Moral, and
  Matthias C.~M. Troffaes, editors, {\em ISIPTA'09: Proceedings of the Sixth
  International Symposium on Imprecise Probability: Theories and Applications},
  pages 179--188, Durham, UK, July 2009. SIPTA.

\bibitem{desterckedubois2008}
S.~Destercke, D.~Dubois, and E.~Chojnacki.
\newblock Unifying practical uncertainty representations: {I}. {G}eneralized
  p-boxes.
\newblock {\em International Journal of Approximate Reasoning}, 49(3):649--663,
  2008.

\bibitem{desterckedubois2008b}
S.~Destercke, D.~Dubois, and E.~Chojnacki.
\newblock Unifying practical uncertainty representations: {II}. {C}louds.
\newblock {\em International Journal of Approximate Reasoning}, 49(3):664--677,
  2008.

\bibitem{2009:desterckeb}
S.~Destercke and O.~Strauss.
\newblock Using cloudy kernels for imprecise linear filtering.
\newblock In {\em IPMU}, pages 198--207, 2010.

\bibitem{DuboisPrade1987}
D.~Dubois and H.~Prade.
\newblock The mean value of a fuzzy number.
\newblock {\em Fuzzy Sets and Systems}, 24(3):279--300, 1987.

\bibitem{dubois1988}
D.~Dubois and H.~Prade.
\newblock {\em Possibility Theory}.
\newblock Plenum Press, New York, 1988.

\bibitem{ferson2003}
S.~Ferson, V.~Kreinovich, L.~Ginzburg, D.~S. Myers, and K.~Sentz.
\newblock Constructing probability boxes and {D}empster-{S}hafer structures.
\newblock Technical Report SAND2002--4015, Sandia National Laboratories,
  January 2003.

\bibitem{2006:ferson}
S.~Ferson and W.~Tucker.
\newblock Sensitivity analysis using probability bounding.
\newblock {\em Reliability engineering and system safety},
  91(10-11):1435--1442, 2006.

\bibitem{2008:ferson}
S.~Ferson and W.~Tucker.
\newblock Probability boxes as info-gap models.
\newblock In {\em Proceedings of the Annual Meeting of the North American Fuzzy
  Information Processing Society}, New York (USA), 2008.

\bibitem{1935:frechet}
M.~Fr\'{e}chet.
\newblock G\'{e}n\'{e}ralisations du th\'{e}or\`{e}me des probabilit\'{e}s
  totales.
\newblock {\em Fundamenta Mathematica}, 25:379--387, 1935.

\bibitem{2008:fuchs}
M.~Fuchs and A.~Neumaier.
\newblock Potential based clouds in robust design optimization.
\newblock {\em Journal of Statistical Theory and Practice, Special Issue on
  Imprecision}, 3(1):225--238, 2008.

\bibitem{hoeffding1963}
W.~Hoeffding.
\newblock Probability inequalities for sums of bounded random variables.
\newblock {\em Journal of the Americal Statistical Association}, 58:13--30,
  1963.

\bibitem{KrieglerHeld2005}
E.~Kriegler and H.~Held.
\newblock Utilizing belief functions for the estimation of future climate
  change.
\newblock {\em International Journal of Approximate Reasoning}, 39:185--209,
  2005.

\bibitem{levi1980a}
I.~Levi.
\newblock {\em The Enterprise of Knowledge}.
\newblock MIT Press, London, 1980.

\bibitem{miranda2008}
E.~Miranda.
\newblock A survey of the theory of coherent lower previsions.
\newblock {\em International Journal of Approximate Reasoning}, 48(2):628--658,
  2008.

\bibitem{miranda2006c}
E.~Miranda, G.~{d}e Cooman, and E.~Quaeghebeur.
\newblock Finitely additive extensions of distribution functions and moment
  sequences: the coherent lower prevision approach.
\newblock {\em International Journal of Approximate Reasoning}, 48(1):132--155,
  2008.

\bibitem{nguyen1978b}
H.~T. Nguyen.
\newblock On random sets and belief functions.
\newblock {\em Journal of Mathematical Analysis and Applications},
  65(3):531--542, 1978.

\bibitem{2008:oberguggenberger}
M.~Oberguggenberger and W.~Fellin.
\newblock Reliability bounds through random sets: non-parametric methods and
  geotechnical applications.
\newblock {\em Computers and Structures}, 86(10):1093--1101, 2008.

\bibitem{2004:regan}
H.M. Regan, S.~Ferson, and D.~Berleant.
\newblock Equivalence of methods for uncertainty propagation of real-valued
  random variables.
\newblock {\em International Journals of Approximate Reasoning}, 36:1--30,
  2004.

\bibitem{schechter1997}
E.~Schechter.
\newblock {\em Handbook of Analysis and Its Foundations}.
\newblock Academic Press, San Diego, CA, 1997.

\bibitem{shafer1976}
G.~Shafer.
\newblock {\em A Mathematical Theory of Evidence}.
\newblock Princeton University Press, Princeton, NJ, 1976.

\bibitem{troffaes2005}
M.~C.~M. Troffaes.
\newblock {\em Optimality, Uncertainty, and Dynamic Programming with Lower
  Previsions}.
\newblock PhD thesis, Ghent University, Ghent, Belgium, March 2005.

\bibitem{1997:tsiporkova:upperinverse}
E.~Tsiporkova-Hristoskova, B.~De~Baets, and E.~Kerre.
\newblock A fuzzy inclusion based approach to upper inverse images under fuzzy
  multivalued mappings.
\newblock {\em Fuzzy Sets and Systems}, 85(1):93--108, 1997.

\bibitem{walley1991}
P.~Walley.
\newblock {\em Statistical Reasoning with Imprecise Probabilities}.
\newblock Chapman and Hall, London, 1991.

\bibitem{weichselberger2001}
K.~Weichselberger.
\newblock {\em {E}lementare {G}rundbegriffe einer allgemeineren
  {W}ahrscheinlichkeitsrechnung {I} --- {I}ntervallwahrscheinlichkeit als
  umfassendes {K}onzept}.
\newblock Physica, Heidelberg, 2001.
\newblock In cooperation with T. Augustin and A. Wallner.

\bibitem{williams1975}
P.~M. Williams.
\newblock Notes on conditional previsions.
\newblock Technical report, School of Mathematical and Physical Science,
  University of Sussex, UK, 1975.

\bibitem{williams2007}
P.~M. Williams.
\newblock Notes on conditional previsions.
\newblock {\em International Journal of Approximate Reasoning}, 44:366--383,
  2007.
\newblock Revised journal version of \cite{williams1975}.

\bibitem{WilliamsonDowns1990}
R.~C. Williamson and T.~Downs.
\newblock Probabilistic arithmetic {I}: Numerical methods for calculating
  convolutions and dependency bounds.
\newblock {\em International Journal of Approximate Reasoning}, 4:8--158, 1990.

\bibitem{1989:williamson}
R.C. Williamson.
\newblock {\em Probabilistic Arithmetic}.
\newblock PhD thesis, University of Queensland, Australia, 1989.

\end{thebibliography}
\end{document}